\documentclass[11pt]{amsart}

\usepackage{amssymb,verbatim,hyperref,cite}
\usepackage[margin=1in]{geometry}
\usepackage[all,cmtip]{xy}

\newcommand{\cbinom}[2]{\genfrac{\{}{\}}{0pt}{}{#1}{#2}}

\newcommand{\subgrp}[1]{\langle #1 \rangle}

\newcommand{\set}[1]{\left\{ #1 \right\}}

\newcommand{\wt}[1]{\widetilde{ #1}}
\newcommand{\wh}[1]{\widehat{ #1 }}
\newcommand{\ol}[1]{\overline{#1}}

\newcommand{\ve}{\varepsilon}

\DeclareMathOperator{\chr}{char}

\DeclareMathOperator{\Dist}{Dist}
\DeclareMathOperator{\End}{End}
\DeclareMathOperator{\ev}{ev}
\DeclareMathOperator{\Ext}{Ext}

\DeclareMathOperator{\gr}{gr}
\DeclareMathOperator{\opH}{H}
\DeclareMathOperator{\Hom}{Hom}

\DeclareMathOperator{\id}{id}
\DeclareMathOperator{\im}{im}
\DeclareMathOperator{\ind}{ind}
\DeclareMathOperator{\Lie}{Lie}
\DeclareMathOperator{\Mor}{Mor}
\DeclareMathOperator{\Nil}{Nil}

\DeclareMathOperator{\res}{res}

\renewcommand{\mod}{\, \textup{mod}\, }

\newcommand{\Cbul}{C^\bullet}

\newcommand{\Hbul}{\opH^\bullet}
\newcommand{\Hev}{\opH^{\textup{ev}}}
\newcommand{\Hodd}{\opH^{\textup{odd}}}


\newcommand{\A}{\mathbb{A}}

\newcommand{\F}{\mathbb{F}}

\newcommand{\N}{\mathbb{N}}

\newcommand{\Z}{\mathbb{Z}}

\newcommand{\Fp}{\F_p}

\newcommand{\g}{\mathfrak{g}}
\newcommand{\gl}{\mathfrak{gl}}

\newcommand{\Bbar}{\ol{B}}
\newcommand{\Nbar}{\ol{N}}
\newcommand{\Ybar}{\ol{Y}}
\newcommand{\Xbar}{\ol{X}}
\newcommand{\betabar}{\ol{\beta}}

\newcommand{\fone}{f_{\ol{1}}}
\newcommand{\gone}{\g_{\ol{1}}}
\newcommand{\glone}{\gl(m|n)_{\ol{1}}}
\newcommand{\Lone}{L_{\ol{1}}}
\newcommand{\fzero}{f_{\ol{0}}}
\newcommand{\gzero}{\g_{\ol{0}}}
\newcommand{\glzero}{\gl(m|n)_{\ol{0}}}
\newcommand{\Lzero}{L_{\ol{0}}}
\newcommand{\Vone}{V_{\ol{1}}}
\newcommand{\Vzero}{V_{\ol{0}}}

\newcommand{\alg}{\mathfrak{alg}}

\newcommand{\svec}{\textup{svec}}
\newcommand{\smod}{\textup{smod}}
\newcommand{\csalg}{\mathfrak{csalg}}
\newcommand{\scomod}{\textup{scomod}}
\newcommand{\salg}{\mathfrak{salg}}
\newcommand{\fsvec}{\mathfrak{svec}}
\newcommand{\fsmod}{\mathfrak{smod}}
\newcommand{\fscomod}{\mathfrak{scomod}}

\newcommand{\lab}{L_{ab}}
\newcommand{\vlab}{V(\lab)}
\newcommand{\Gev}{G_{\textrm{ev}}}
\newcommand{\Gred}{G_{\textrm{red}}}
\newcommand{\Iev}{I_{\ev}}
\newcommand{\Sspk}{SSp_k}

\newcommand{\Ug}{U(\g)}
\newcommand{\Vg}{V(\g)}

\numberwithin{equation}{subsection}

\newtheorem{theorem}{Theorem}[subsection]
\newtheorem{proposition}[theorem]{Proposition}
\newtheorem{conjecture}[theorem]{Conjecture}
\newtheorem{corollary}[theorem]{Corollary}
\newtheorem{lemma}[theorem]{Lemma}

\theoremstyle{definition}

\newtheorem{remark}[theorem]{Remark}

\title[Cohomological finite generation for restricted Lie super\-algebras]{Cohomological finite generation for restricted Lie super\-algebras and finite supergroup schemes}

\author{Christopher M.\ Drupieski}
\address{Department of Mathematical Sciences, DePaul University, Chicago, IL 60614, USA}
\email{cdrupies@depaul.edu}

\date{\today}

\subjclass[2010]{Primary 17B56, 20G10. Secondary 17B55.}

\begin{document}

\begin{abstract}
We prove that the cohomology ring of a finite-dimensional restricted Lie superalgebra over a field of characteristic $p > 2$ is a finitely-generated algebra. Our proof makes essential use of the explicit projective resolution of the trivial module constructed by J.\ Peter May for any graded restricted Lie algebra. We then prove that the cohomological finite generation problem for finite supergroup schemes over fields of odd characteristic reduces to the existence of certain conjectured universal extension classes for the general linear supergroup $GL(m|n)$ that are similar to the universal extension classes for $GL_n$ exhibited by Friedlander and Suslin.
\end{abstract}

\maketitle

\section{Introduction}

Motivated by the desire to apply geometric methods in the study of cohomology and modular representation theory, much attention has been paid in the past decades to the qualitative properties of the cohomology rings associated to various classes of Hopf algebras. Specifically, given a Hopf algebra $A$ over the field $k$, if the cohomology ring $\Hbul(A,k)$ is a finitely-generated $k$-algebra, then one can establish, via a now well-tread path, a theory of support varieties that associates a geometric invariant to each $A$-module $M$. One of the most general results in this direction is due to Friedlander and Suslin \cite{Friedlander:1997}. Building on the earlier work of Venkov \cite{Venkov:1959} and Evens \cite{Evens:1961} for finite groups, as well as the work of Friedlander and Parshall \cite{Friedlander:1986,Friedlander:1986b} for restricted Lie algebras, Friedlander and Suslin proved cohomological finite generation for any finite-dimensional cocommuative Hopf algebra over $k$, by using the fact that the category of finite-dimensional cocommuative Hopf algebras over $k$ is naturally equivalent to the category of finite $k$-group schemes. Their argument involved embedding a given finite group scheme in some general linear group, and then exhibiting certain universal extension classes for $GL_n$ that provided the necessary generators for the cohomology ring. More generally, Etingof and Ostrik \cite{Etingof:2004} have conjectured that the Ext-algebra of an arbitrary finite tensor category is a finitely-generated algebra. This conjecture encompasses the cohomology rings of arbitrary finite-dimensional Hopf algebras and of more general finite-dimensional algebras.

In this paper we prove that the cohomology ring of an arbitrary finite-dimensional restricted Lie super\-algebra is a finitely-generated algebra. Similar results have also been claimed by Liu \cite{Liu:2012} and by Bagci \cite{Bagci:2012a}, though their proofs are incomplete; see Remarks \ref{remark:Liu} and \ref{remark:Bagci}. We then begin the process of verifying Etingof and Ostrik's conjecture for the class of finite tensor categories that arise as module categories for finite-dimensional cocommuative Hopf superalgebras. Since the category of finite-dimensional cocommuative Hopf superalgebras is naturally equivalent to the category of finite supergroup schemes, this is equivalent to showing that the cohomology ring $\Hbul(G,k)$ of an arbitrary finite supergroup scheme $G$ over $k$ is a finitely-generated algebra. In analogy to the classical situation, we show that the finite generation problem for finite supergroup schemes reduces to the existence of certain conjectured universal extension classes for the general linear supergroup $GL(m|n)$. Combined with the extension classes for $GL_n$ exhibited by Friedlander and Suslin, the conjectured classes provide the necessary generators for $\Hbul(G,k)$.

We do not address the subject of support varieties for finite supergroup schemes in any depth at this time, though using our cohomological finite generation result for restricted Lie superalgebras, one can begin to recover the results of Liu \cite[\S4]{Liu:2012} and of Bagci \cite[\S4]{Bagci:2012a} (taking care to note that the two authors' definitions of support varieties for restricted Lie superalgebras are slightly different; see Remark \ref{remark:braidedgraded}). In future work we hope to address the existence of the conjectured universal extension classes for $GL(m|n)$, and to explore connections to the representation theory and cohomology of Schur superalgebras. We also hope to apply the methods and techniques developed in this paper to better understand the explicit structure of the cohomology rings for Lie superalgebras, restricted Lie superalgebras, and finite supergroup schemes over fields of positive characteristic, with the ultimate goal of understanding cohomological support varieties for these objects.

Now let us summarize the contents of the paper in slightly more detail. We begin in Section \ref{section:preliminaries} with an account of the preliminary material needed for the rest of the paper. Next, Section \ref{section:cohomologyrls} forms the paper's technical heart, in which we establish cohomological finite generation for finite-dimensional restricted Lie superalgebras. Broadly speaking, the finite generation argument for a restricted Lie superalgebra $L$ parallels the argument of Friedlander and Parshall \cite{Friedlander:1986b} for an ordinary restricted Lie algebra. Specifically, we construct in Section \ref{subsection:Mayspecseq} a spectral sequence, natural in $L$, that converges to the cohomology ring $\Hbul(V(L),k)$. Here $V(L)$ is the restricted enveloping superalgebra of $L$. The $E_1$-page of the spectral sequence turns out to be noetherian over a finitely-generated polynomial algebra. We then show in Section \ref{subsection:comparison} that the polynomial algebra consists of permanent cycles, and use this to deduce that $\Hbul(V(L),k)$ is finitely-generated (Theorem \ref{theorem:V(L)cohomfingen}).

The major obstacle to implementing the strategy of the previous paragraph is the ``super'' phenomenon that, in contrast to the classical situation of \cite{Friedlander:1986b}, the aforementioned polynomial algebra does not lie on the bottom row of a first quadrant spectral sequence (even after reindexing the spectral sequence), and hence does not automatically consist of permanent cycles. To overcome this obstacle, we consider in Section \ref{subsection:comparison} a new spectral sequence, which though not natural in $L$, is much more explicit, arising as it does from an explicit projective resolution of the trivial module constructed by J.\ Peter May \cite{May:1966}. The details of (a modified version of) the projective resolution are summarized in Section \ref{subsection:Koszulresolutionrls}. The new and old spectral sequences are isomorphic from the $E_1$-page onward, and using the new spectral sequence we are able to show that the polynomial algebra is generated by permanent cycles. Along the way to proving cohomological finite generation for finite-dimensional restricted Lie superalgebras, we establish in Section \ref{subsection:ordinarycohomology} the corresponding result for ordinary finite-dimensional Lie superalgebras over fields of odd characteristic (Theorem \ref{theorem:fgforU(L)}). This last mentioned result is not obvious in the super case as it is for ordinary Lie algebras, since the cohomology ring of a finite-dimensional Lie superalgebra need not vanish in large degrees.

In Section \ref{section:supergroupschemes} we recall the basic notions and definitions for supergroup schemes that are needed to study the cohomology rings of finite and infinitesimal supergroup schemes. Our treatment parallels that of Jantzen \cite{Jantzen:2003} for ordinary algebraic group schemes. Some of the topics touched here have been treated before in the papers of Brundan and Kleshchev \cite{Brundan:2003}, Shu and Wang \cite{Shu:2008}, and Zubkov \cite{Zubkov:2009}. In addition to establishing notation, we prove in Section \ref{subsection:Frobeniuskernels} that restricted enveloping superalgebras correspond, under the equivalence between the category of finite-dimensional cocommutative Hopf superalgebras and the category of finite supergroup schemes, to height one infinitesimal supergroup schemes. In particular, given a supergroup scheme $G$ with restricted Lie superalgebra $\g = \Lie(G)$, the representation theory of $\Vg$ is equivalent to that of $G_1$, the first Frobenius kernel of $G$.

Finally, in Section \ref{section:cohomologysupergroups} we begin the treatment of cohomology rings for arbitrary finite supergroup schemes. Given a finite supergroup scheme $G$ with restricted Lie superalgebra $\g$, in Section \ref{subsection:MayspecseqG1} we carefully show that the May spectral sequence of Section \ref{subsection:Mayspecseq} that converges to $\Hbul(\Vg,k) = \Hbul(G_1,k)$ is a spectral sequence of rational $G$-supermodules. Next, in Section \ref{subsection:reductioninfinitesimal} we show that a finite supergroup scheme $G$ over a (perfect) field of odd characteristic is the semidirect product of an infinitesimal supergroup scheme $G^0$ and an ordinary etale finite group scheme $\pi_0(G)$. Applying a Lyndon--Hochschild--Serre spectral sequence as in \cite{Friedlander:1997}, this lets us reduce the cohomological finite generation problem for finite supergroup schemes to the problem for infinitesimal supergroup schemes. Finally, in Section \ref{subsection:cohomologyinfinitesimal} we show that the finite generation problem for infinitesimal supergroup schemes reduces to the existence of certain conjectured extension classes for the general linear supergroup $GL(m|n)$, described in Conjecture \ref{conjecture:univextclasses}. These extension classes do not by themselves provide the generators for $\Hbul(G,k)$, but must be combined with the extension classes for $GL_{m+n}$ previously exhibited by Friedlander and Suslin. This corresponds to the fact that the quotient supergroup $GL(m|n)/GL(m|n)_1$ is isomorphic to the underlying purely even subgroup $GL_n \times GL_m$ of $GL(m|n)$ (cf.\ Lemma \ref{lemma:G/Gr}), which is then a closed subgroup of $GL_{m+n}$.

\section{Preliminaries} \label{section:preliminaries}

\subsection{Graded objects} \label{subsection:conventions}

Let $k$ be a field of characteristic $p \neq 2$. All objects in this paper are defined over $k$, all unadorned tensor products denote tensor products over $k$, and, except as indicated, all algebras are associative with unit. Set $\Z_2 = \Z/2\Z = \{\ol{0},\ol{1}\}$. Recall that a super\-space $V = \Vzero \oplus \Vone$ is a $\Z_2$-graded vector space. Given a super\-space $V$ and a homogeneous element $v \in V$, write $\ol{v} \in \Z_2$ for the $\Z_2$-degree of $v$. We say that $v$ is even (resp.\ odd) if $\ol{v} = \ol{0}$ (resp.\ $\ol{v} = \ol{1}$). Similarly, given $n \in \Z$, write $\ol{n} \in \Z_2$ for the parity of $n$. We assume in this paper that the reader is familiar with the sign conventions for working with $\Z_2$-graded objects, and is familiar with the definitions for such objects as super\-algebras, super\-coalgebras, super\-bialgebras, and Hopf super\-algebras. The reader unfamiliar with these notions can consult the thesis of Westra \cite{Westra:2009} and the references therein.

In general, we use the adjective \emph{graded} to indicate that an object admits an additional $\Z$-grading that is compatible with its underlying structure. Thus, a \emph{graded space} is a $\Z$-graded vector space, a \emph{graded super\-space} is a $(\Z \times \Z_2)$-graded vector space, a \emph{graded super\-algebra} is a $(\Z \times \Z_2)$-graded algebra, and so on. Given a graded super\-space $V$ and an element $v \in V$ of bidegree $(m,\ol{n}) \in \Z \times \Z_2$, we call $m$ the \emph{external degree} of $v$, written $\deg(v) = m$, and we call $\ol{n}$ the \emph{internal degree} of $v$. In general, if the adjective \emph{graded} is omitted from the description of an object, then we consider the object as graded but concentrated in external degree $0$. In particular, we consider the field $k$ as a graded super\-algebra concentrated in bidegree $(0,\ol{0})$.

Let $V$ and $W$ be graded super\-spaces, and set $\Gamma = \Z \times \Z_2$. Given $\gamma \in \Gamma$, the $\gamma$-graded component of $V$ is denoted $V_\gamma$. The tensor product $V \otimes W$ is $\Gamma$-graded with $(V \otimes W)_\gamma = \sum_{\alpha + \beta = \gamma} V_\alpha \otimes W_\beta$. The graded super\-spaces $V \otimes W$ and $W \otimes V$ are isomorphic via the graded twist map $T$, which is defined on homogeneous simple tensors by
\begin{equation} \label{eq:gradedtwist}
T(v \otimes w) = (-1)^{\deg(v)\deg(w)} (-1)^{\ol{v} \cdot \ol{w}} w \otimes v.
\end{equation}
For the remainder of the paper, whenever we state a formula such as \eqref{eq:gradedtwist} in which either internal or external degrees have been specified, we mean that the formula holds as given for homogeneous elements, and that it extends linearly to nonhomogeneous elements. If $V$ and $W$ are concentrated in external degree $0$, then $T$ restricts to the usual twist map on tensor products of super\-spaces.

A linear map $f: V \rightarrow W$ is homogeneous of degree $\gamma$ if $f(V_\alpha) \subset W_{\alpha - \gamma}$ for all $\alpha \in \Gamma$. The space of all degree-$\gamma$ homogeneous linear maps from $V$ to $W$ is denoted $\Hom_k(V,W)_\gamma$. Suppose $V$ is finite-dimensional. Then $\Hom_k(V,W) = \bigoplus_{\gamma \in \Gamma} \Hom_k(V,W)_\gamma$. In particular, $V^* = \Hom_k(V,k)$ is a graded superspace with $(V^*)_\gamma = (V_\gamma)^*$. Also, $V \cong (V^*)^*$ as graded super\-spaces via the map $\varphi: V \rightarrow (V^*)^*$ satisfying $\varphi(v)(g) = (-1)^{\deg(v) \deg(g)}(-1)^{\ol{v} \cdot \ol{g}} g(v)$. Continuing to assume that $V$ is finite-dimensional, $W \otimes V^* \cong \Hom_k(V,W)$ as graded super\-spaces via the map $\eta: W \otimes V^* \rightarrow \Hom_k(V,W)$ defined by $\eta(w \otimes g)(v) = w \cdot g(v) = g(v).w$. Given graded superspaces $V,W,V',W'$ and linear maps $f: V \rightarrow V'$ and $g: W \rightarrow W'$, define $f \otimes g: V \otimes W \rightarrow V' \otimes W'$ to be the linear map satisfying
\begin{equation} \label{eq:signsforproductofmaps}
(f \otimes g)(v \otimes w) = (-1)^{\deg(g)\deg(v)}(-1)^{\ol{g} \cdot \ol{v}} f(v) \otimes g(w).
\end{equation}

A graded super\-algebra $A$ with product $\mu: A \otimes A \rightarrow A$ is \emph{graded-commutative} if $\mu \circ T = \mu$, that is, if $ab = (-1)^{\deg(a)\deg(b)} (-1)^{\ol{a} \cdot \ol{b}} ba$ for all homogeneous elements $a,b \in A$. Similarly, $A$ and is \emph{graded-anticommutative} if $\mu \circ T = -\mu$. A graded supercoalgebra $C$ with coproduct $\Delta: C \rightarrow C \otimes C$ is \emph{graded-cocommutative} if $T \circ \Delta = \Delta$. If $A$ and $B$ are graded super\-algebras with product maps $\mu_A$ and $\mu_B$, then the tensor product $A \otimes B$ is a graded super\-algebra with product map $(\mu_A \otimes \mu_B) \circ (1 \otimes T \otimes 1)$. Similarly, if $C$ and $D$ are graded super\-coalgebras with coproducts $\Delta_C$ and $\Delta_D$, then $C \otimes D$ is a graded super\-coalgebra with coproduct $(1 \otimes T \otimes 1) \circ (\Delta_C \otimes \Delta_D)$. If $A$ is a super\-algebra, considered as a graded super\-algebra concentrated in external degree $0$, then $A$ is graded-commutative (resp.\ graded-anticommutative) if and only if it is commutative (resp.\ anticommutative) in the usual sense for super\-algebras.\footnote{In this paper, the term \emph{commutative}, as applied to super\-algebras, will always be used in the sense given here. The usual notion of commutativity for abstract rings will be referred to as \emph{ordinary} commutativity.} Similarly, if $C$ is a graded super\-coalgebra concentrated in external degree $0$, then $C$ is graded-cocommutative if and only if it is cocommutative in the usual sense for super\-coalgebras.

Let $A$ be a graded super\-algebra with product $\mu: A \otimes A \rightarrow A$. Then $A$ is a \emph{differential graded super\-algebra} if there exists a homogeneous linear map $d: A \rightarrow A$, of bidegree $(1,\ol{0})$ or $(-1,\ol{0})$, satisfying $d \circ d = 0$ and $\mu \circ (d \otimes 1 + 1 \otimes d) = d \circ \mu$. In other words, $d(ab) = d(a) \cdot b + (-1)^{\deg(a)} a \cdot d(b)$ for all $a,b \in A$. Similarly, a graded super\-coalgebra $C$ with coproduct $\Delta$ is a \emph{differential graded super\-coalgebra} if there exists a homogeneous linear map $d : C \rightarrow C$, of bidegree $(1,\ol{0})$ or $(-1,\ol{0})$, satisfying $d \circ d = 0$ and $(d \otimes 1 + 1 \otimes d) \circ \Delta = \Delta \circ d$. If $A$ and $B$ are differential graded super\-algebras with differentials $d_A$ and $d_B$, respectively, then $A \otimes B$ is a differential graded super\-algebra with differential $d_{A \otimes B} = d_A \otimes 1 + 1 \otimes d_B$. Similarly, the tensor product of two differential graded super\-coalgebras is again a differential graded super\-coalgebra.

\subsection{Graded modules} \label{subsection:gradedmodules}

Let $A$ be a graded super\-algebra. A graded super\-space $M$ is a \emph{graded left $A$-super\-module} if $M$ is a left $A$-module and if the module structure map $\sigma: A \otimes M \rightarrow M$ is homo\-geneous of bidegree $(0,\ol{0})$. Graded right $A$-super\-modules are defined similarly. Let $M$ and $N$ be graded left $A$-super\-modules. A linear map $g: M \rightarrow N$ is a \emph{graded left $A$-super\-module homo\-morphism} if for all $m \in M$ and $a \in A$, one has $g(a.m) = (-1)^{\deg(a)\deg(g)} (-1)^{\ol{a} \cdot \ol{g}} a.g(m)$. Similarly, if $M$ and $N$ are graded right $A$-super\-modules, then $g: M \rightarrow N$ is a graded right $A$-super\-module homomorphism if $g(m.a) = g(m).a$ for all $m \in M$ and $a \in A$. The space of all graded $A$-super\-module homomorphisms $M \rightarrow N$ is denoted $\Hom_A(M,N)$.

Let $A$ and $B$ be graded super\-algebras, let $M$ be a graded left $A$-super\-module with module structure map $\sigma: A \otimes M \rightarrow M$, and let $N$ be a graded left $B$-super\-module with structure map $\tau: B \otimes N \rightarrow N$. Then $M \otimes N$ is a graded left $A \otimes B$-super\-module with module structure map $(\sigma \otimes \tau) \circ (1 \otimes T \otimes 1)$. In particular, if $A$ is a graded super\-bialgebra with coproduct $\Delta: A \rightarrow A \otimes A$, and if $M$ and $N$ are graded left $A$-super\-modules, then $M \otimes N$ is naturally a graded left $A$-supermodule via $\Delta$; we call this the diagonal action of $A$ on $M \otimes N$. The conventions for right modules are similar. Suppose $M$ is finite-dimensional, and that $A$ is a graded Hopf super\-algebra with antipode $S$. Then $M^*$ is a graded left $A$-super\-module, with the action of $a \in A$ on $g \in M^*$ defined by $(a.g)(m) = (-1)^{\deg(a) \deg(g)} (-1)^{\ol{a} \cdot \ol{g}} g(S(a).m)$, and $\Hom_k(M,N)$ is a graded left $A$-super\-module via the isomorphism $\Hom_k(M,N) \cong N \otimes M^*$ and the diagonal action of $A$ on $N \otimes M^*$.

Let $A$ be a differential graded super\-algebra with differential $d_A$, and let $P$ be a graded left $A$-super\-module with structure map $\sigma: A \otimes P \rightarrow P$. Then $P$ is a \emph{differential graded $A$-super\-module} if there exists a homogeneous map $d_P: P \rightarrow P$, of bidegree $(1,\ol{0})$ or $(-1,\ol{0})$, such that $d_P \circ d_P = 0$ and $d_P \circ \sigma = \sigma \circ (d_A \otimes 1 + 1 \otimes d_P)$. In particular, suppose $A$ is concentrated in external degree $0$, so that $d_A = 0$, and suppose $d_P$ is of bidegree $(1,\ol{0})$. Then $P$ is a chain complex of left $A$-super\-modules. Let $M$ be a left $A$-super\-module, considered as a graded $A$-supermodule concentrated in external degree $0$. Then we consider $\bigoplus_{n \in \N} \Hom_A(P_n,M)$ as a cochain complex with differential $\partial: \Hom_A(P_n,M) \rightarrow \Hom_A(P_{n+1},M)$ defined by
\begin{equation} \label{eq:cochaindifferential}
\partial(f) = (-1)^{\deg(f)} f \circ d = (-1)^n f \circ d.
\end{equation}

\subsection{Symmetric, exterior, and divided polynomial algebras} \label{subsection:superexterioralgebra}

Let $V = \Vzero \oplus \Vone$ be a super\-space. Considering $V$ as a graded super\-space concentrated in external degree $1$, the tensor algebra $T(V)$ is naturally a graded super\-algebra. The \emph{supersymmetric algebra} $S_s(V)$ is defined by
\[
S_s(V) = T(V)/\subgrp{x \otimes y - (-1)^{\ol{x} \cdot \ol{y}}y \otimes x: x,y \in V},
\]
and the \emph{superexterior algebra} $\Lambda_s(V)$ is the quotient of $T(V)$ defined by
\[
\Lambda_s(V) = T(V)/\subgrp{x \otimes y + (-1)^{\ol{x} \cdot \ol{y}}y \otimes x: x,y \in V}.
\]
Then $S_s(V)$ is the free commutative super\-algebra generated by $V$, and $\Lambda_s(V)$ is the free graded-commutative graded super\-algebra generated by $V$. The super\-exterior algebra $\Lambda_s(V)$ is also the free anti-commutative super\-algebra generated by $V$. The linear maps $V \rightarrow V \otimes V: v \mapsto v \otimes 1 + 1 \otimes v$ and $V \rightarrow V: v \mapsto -v$ extend to a super\-algebra homomorphism $T(V) \rightarrow T(V) \otimes T(V)$ and a super\-algebra anti-homomorphism $T(V) \rightarrow T(V)$, respectively, as well as to a homomorphism of graded super\-algebras $T(V) \rightarrow T(V) \otimes T(V)$, and an anti-homomorphism of graded super\-algebras $T(V) \rightarrow T(V)$. In the former case, the maps endow $T(V)$ and $S_s(V)$ with the structures of Hopf super\-algebras, and in the latter case, they endow $T(V)$ and $\Lambda_s(V)$ with the structures of graded Hopf super\-algebras. (In the latter case, the external grading plays a role in the algebra structure on the tensor product $T(V) \otimes T(V)$.)

Write $S(V)$, $\Lambda(V)$, and $\Gamma(V)$ for the ordinary symmetric, exterior, and divided polynomial algebras on $V$, respectively. Then $S(\Vzero)$ and $\Lambda(\Vone)$ are naturally commutative super\-algebras, and $\Lambda(\Vzero)$, $S(\Vone)$, and $\Gamma(\Vone)$ are naturally graded-commutative graded super\-algebras. As an algebra, $S_s(V)$ is isomorphic to the tensor product of commutative super\-algebras $S(\Vzero) \otimes E(\Vone)$, and $\Lambda_s(V)$ is isomorphic to the tensor product of graded-commutative graded super\-algebras $\Lambda(\Vzero) \otimes S(\Vone)$. Fix bases $\set{x_1,\ldots,x_s}$ and $\set{y_1,\ldots,y_t}$ for $\Vzero$ and $\Vone$, respectively. The exterior algebra $\Lambda(\Vzero)$ is generated as a graded super\-algebra by the symbols $\subgrp{x_i}$ of bidegrees $(1,\ol{0})$. We denote a typical monomial in $\Lambda(\Vzero)$ by $\subgrp{x_{i_1} \cdots x_{i_n}}$. The polynomial algebra $S(\Vone)$ is generated by the symbols $y_j$ of bidegrees $(1,\ol{1})$, with a typical monomial denoted $y_1^{a_1} y_2^{a_2} \cdots y_t^{a_t}$. The divided polynomial algebra $\Gamma(\Vone)$ is generated by the symbols $\gamma_r(y)$ for $r \in \N$ and $y \in \Vone$, with $\gamma_r(y)$ of bidegree $(r,\ol{r})$. The generators for $\Gamma(\Vone)$ satisfy the relations $\gamma_0(y) = 1$, $\gamma_r(0) = 0$ if $r \geq 1$, $\gamma_r(y)\gamma_s(y) = \binom{r+s}{r} \gamma_{r+s}(y)$, and $\gamma_r(y+y') = \sum_{i=0}^r \gamma_i(y)\gamma_{r-i}(y')$. By convention, $\gamma_r(y) = 0$ if $r < 0$.

The external degree-$n$ components of $\Lambda(\Vzero)$, $S(\Vone)$, and $\Gamma(\Vzero)$ are denoted $\Lambda^n(\Vzero)$, $S^n(\Vone)$, and $\Gamma_n(\Vone)$, respectively. Then $\Gamma_n(\Vone)$ admits a basis consisting of all monomials $\gamma_{a_1}(y_1) \cdots \gamma_{a_t}(y_t)$ with $a_j \in \N$ and $\sum_{i=1}^t a_i = n$. The coproducts on $\Lambda(\Vzero)$ and $S(\Vone)$ are determined by the maps $\subgrp{x_i} \mapsto \subgrp{x_i} \otimes 1 + 1 \otimes \subgrp{x_i}$ and $y_j \mapsto y_j \otimes 1 + 1 \otimes y_j$, while the coproduct on $\Gamma(\Vone)$ is determined by the map $\gamma_r(y) \mapsto \sum_{i=0}^r \gamma_i(y) \otimes \gamma_{r-i}(y)$. If $\chr(k) = 0$, then $\Gamma(\Vone) \cong S(\Vone)$ as bialgebras via the map $\gamma_r(y) \mapsto \frac{1}{r!} y^r$. The monomials $\subgrp{x_{i_1} \cdots x_{i_b}} \gamma_{a_1}(y_1) \cdots \gamma_{a_t}(y_t)$ with $1 \leq i_1 < \cdots < i_b \leq s$  and $a_j \in \N$ form a basis for the graded-commutative graded super\-algebra $\Lambda(\Vzero) \otimes \Gamma(\Vone)$, which we call the standard monomial basis for $\Lambda(\Vzero) \otimes \Gamma(\Vone)$. Similarly, we call the monomials $\subgrp{x_{i_1} \cdots x_{i_s}} y_1^{a_1} \cdots y_t^{a_t}$ the standard monomial basis vectors for $\Lambda(\Vzero) \otimes S(\Vone)$.

\subsection{Categories} \label{subsection:categories}

The category of $k$-super\-spaces $\fsvec_k$ is the abelian category with objects consisting of all $k$-super\-spaces, and with morphisms consisting of the even linear maps between super\-spaces. Given $V,W \in \fsvec_k$, the set of morphisms $V \rightarrow W$ in $\fsvec_k$ is denoted $\Hom_{\fsvec_k}(V,W)$. Then $\Hom_{\fsvec}(V,W) = \Hom_k(V,W)_{\ol{0}}$. The category consisting of the $k$-super\-spaces and arbitrary linear maps between superspaces is denoted $\svec_k$. We write $\alg_k$ for the category of $k$-algebras and $k$-algebra homomorphisms. The category $\salg_k$ of $k$-super\-algebras is the subcategory of $\alg_k$ with objects consisting of the super\-algebras over $k$, and with morphisms consisting of the even $k$-algebra homo\-morphisms between super\-algebras. Similarly, if $A \in \salg_k$ is commutative, then the category of $A$-superalgebras is denoted $\salg_A$. The subcategory of $\salg_k$ consisting of only the commutative $k$-superalgebras is denoted $\csalg_k$. When the coefficient ring is clear from the context, we may omit the subscript $k$ from the notations $\alg_k$, $\salg_k$, and $\csalg_k$.

Let $A \in \salg_k$. The category $\smod_A$ of (left or right) $A$-super\-modules and arbitrary $A$-supermod\-ule homomorphisms is not abelian, though the underlying even subcategory $\fsmod_A$, consisting of the same objects but only the even homo\-morphisms, is an abelian category. Then $\Hom_{\fsmod_A}(M,N) = \Hom_A(M,N)_{\ol{0}}$. Similarly, given a super\-coalgebra $C$ over $k$, the category of $C$-supercomodules and arbitrary $C$-supercomodule homomorphisms is denoted $\scomod_C$, and the underlying abelian even subcategory, consisting of the same objects but only the even supercomodule homomorphisms, is denoted $\fscomod_C$. Now suppose $A \in \csalg_k$. Then the categories of left $A$-super\-modules, right $A$-super\-modules, and $A$-super\-bimodules are all equivalent. Indeed, $M$ is a left $A$-super\-module with left action $a \otimes m \mapsto a.m$ if and only if $M$ is a right $A$-super\-module with right action $m \otimes a \mapsto (-1)^{\ol{a} \cdot \ol{m}} a.m$. If $V,W \in \fsvec_k$, then there exists a natural isomorphism of $A$-superbimodules
\begin{equation} \label{eq:VWbimodiso}
(V \otimes A) \otimes_A (W \otimes A) \cong (V \otimes W) \otimes A,
\end{equation}
which identifies $(v \otimes a) \otimes_A (w \otimes a')$ with $(-1)^{\ol{a} \cdot \ol{w}}(v \otimes w) \otimes aa'$.

\subsection{Cohomology} \label{subsection:cohomology}

Let $A \in \salg_k$. A super\-module $P \in \fsmod_A$ is called projective if the functor $\Hom_A(P,-): \fsmod_A \rightarrow \fsvec_k$ is exact. Similarly, $Q \in \fsmod_A$ is injective if $\Hom_A(-,Q): \fsmod_A \rightarrow \fsvec_k$ is exact. The category $\fsmod_A$ contains enough projectives and enough injectives \cite[\S\S6.2--6.3]{Westra:2009}, so we can apply the machinery of homological algebra to define cohomology groups in $\fsmod_A$. Specifically, given supermodules $M,N \in \fsmod_A$, define $\Ext_A^n(M,-)$ to be the $n$-th right derived functor of $\Hom_A(M,-): \fsmod_A \rightarrow \fsvec_k$, and $\Ext_A^n(-,N)$ to be the $n$-th right derived functor of $\Hom_A(-,N): \fsmod_A \rightarrow \fsvec_k$. In particular, if $A$ is an augmented $k$-superalgebra, then $\opH^n(A,-)$ is the $n$-th right derived functor of $\Hom_A(k,-) \cong (-)^A: \fsmod_A \rightarrow \fsvec_k$.

Let $A$ be a super\-bialgebra, and let $P_\bullet \rightarrow k$ be a projective resolution in $\fsmod_A$. Then the diagonal action makes the tensor product of complexes $P_\bullet \otimes P_\bullet$ into an exact complex of left $A$-super\-modules, and hence there exists a chain homomorphism $D \in \Hom_{\fsmod_A}(P_\bullet,P_\bullet \otimes P_\bullet)$ lifting the identity $k \rightarrow k$. Now let $M$ and $N$ be left $A$-super\-modules, and let $f \in \Hom_A(P_m,M)$ and $g \in \Hom_A(P_n,N)$ be cocycles. Then $f \odot g := (f \otimes g) \circ D: P_{m+n} \rightarrow M \otimes N$ is a cocycle in $\Hom_A(P_\bullet,M \otimes N)$, and the induced map on cohomology groups $\Hbul(A,M) \otimes \Hbul(A,N) \rightarrow \Hbul(A, M \otimes N)$ is called the cup product. By abuse of terminology, we call $f \odot g$ the cup product of the cochains $f$ and $g$. Taking $M = N = k$, the cup product gives $\Hbul(A,k)$ the structure of a graded super\-algebra. At the level of cochains, the cup product $f \odot g$ satisfies the derivation property
\begin{equation} \label{eq:differentialderivation}
\partial(f \odot g) = \partial(f) \odot g + (-1)^{\deg(f)} f \odot \partial(g).
\end{equation}
If the diagonal approximation $D: P_\bullet \rightarrow P_\bullet \otimes P_\bullet$ is coassociative, then $\partial$ makes $\Hom_A(P_\bullet,k)$ into an associative (though perhaps nonunital) differential graded super\-algebra.

\subsection{The bar and cobar complexes} \label{subsection:barcomplex}

Let $A$ be an augmented $k$-algebra with augmentation map $\ve: A \rightarrow k$. Recall that the (un-normalized) left bar resolution $B(A) = \bigoplus_{n \in \N} B_n(A)$ for $A$ is the $A$-free resolution of $k$ with $B_n(A) = A \otimes A^{\otimes n}$. Given $a_0,a_1,\ldots,a_n \in A$, write $a_0[a_1|\cdots|a_n]$ for the element $a_0 \otimes a_1 \otimes \cdots \otimes a_n \in A \otimes A^{\otimes n}$. Elements of $B_0(A) = A$ are written in the form $a [\ ]$. Then the differential on $B(A)$ is defined by
\begin{equation} \label{eq:bardifferential} \textstyle
\begin{split}
d(a_0[a_1|\cdots|a_n]) &= a_0a_1[a_2|\cdots|a_n]  + (-1)^n a_0[a_1|\cdots|a_{n-1}]\ve(a_n),\\
&\phantom{=} + \textstyle \sum_{i=1}^{n-1} (-1)^i a_0[a_1|\cdots|a_ia_{i+1}|\cdots|a_n]
\end{split}
\end{equation}
The map $s: B_n(A) \rightarrow B_{n+1}(A)$ defined by $s(a_0[a_1|\cdots|a_n]) = [a_0|a_1|\cdots|a_n]$ is a contracting homo\-topy on $B(A)$. Set $\Bbar_n(A) = 1 \otimes A^{\otimes n} \subset B_n(A)$.

If $A$ is an augmented super\-algebra, considered as a graded super\-algebra concentrated in external degree $0$, then $B(A)$ is naturally a differential graded $A$-super\-module. If $A$ is a super\-bialgebra with coproduct $\Delta$, then an explicit diagonal approximation $D: B(A) \rightarrow B(A) \otimes B(A)$ is defined by
\begin{equation} \label{eq:diagonalapproxbar} \textstyle
D([a_1|\ldots|a_n]) = \sum_{i=0}^n (-1)^{\mu} [a_1'|\cdots|a_i'] \cdot \ve(a_{i+1}' \cdots a_n') \otimes a_1'' \cdots a_i''[a_{i+1}''|\cdots|a_n''].
\end{equation}
Here the $a_i$ are assumed to be homogeneous, $\Delta(a_i) = \sum a_i' \otimes a_i''$ is the coproduct of $a_i$ written as a sum of homogeneous simple tensors (the index of summation being understood, and the summation over each term being understood in the formula for $D$), and $\mu = \sum_{i < j} \ol{a_i}'' \cdot \ol{a_j}'$. The formula for $D([a_1|\ldots|a_n])$ is obtained by considering the image of $[a_1|\cdots|a_n]$ under the map $B(A) \rightarrow B(A \otimes A)$ induced by the coproduct on $A$, and then applying the super analogue of the diagonal approximation formula in \cite[p.\ 221]{Cartan:1999}. Since $\ve$ is a super\-algebra homomorphism, $\ve(a_{i+1}' \cdots a_n') = 0$ if $\ol{a_j}' = \ol{1}$ for any $i < j \leq n$. Then it follows that \eqref{eq:diagonalapproxbar} can be rewritten as
\begin{equation}
\begin{split} 
D([a_1|\cdots|a_n]) &= \textstyle \sum_{i=0}^n [a_1'|\cdots|a_i'] \cdot \ve(a_{i+1}' \cdots a_n') \otimes a_1'' \cdots a_i''[a_{i+1}''|\cdots|a_n''] \\
&= \textstyle \sum_{i=0}^n [a_1'|\cdots|a_i'] \otimes a_1'' \cdots a_i''[a_{i+1}|\cdots|a_n].
\end{split}
\end{equation}

Let $A$ be a super\-bialgebra, and let $M$ be a left $A$-super\-module. For each $n \in \N$, set $C^n(A,M) = \Hom_A(B_n(A),M)$. Then $\opH^n(A,M)$ is equal to the $n$-th cohomology group of the cochain complex $\Cbul(A,M)$. Given left $A$-super\-modules $M$ and $N$ and cochains $f \in C^m(A,M)$ and $g \in C^n(A,N)$, the cup product $f \odot g \in C^{m+n}(A,M \otimes N)$ is given by
\[
(f \odot g)([a_1|\cdots|a_{m+n}]) = \sum (-1)^\sigma f([a_1'|\cdots|a_m']) \otimes a_1'' \cdots a_m''. g([a_{m+1}|\cdots|a_{m+n}]).
\]
Here $\sigma = m \cdot \deg(g) + \ol{g} \cdot (\ol{a_1} + \cdots + \ol{a_m})$. In particular, if $N = k$, it follows that
\begin{equation} \label{eq:cupproductN=k}
(f \odot g)([a_1|\cdots|a_{m+n}]) = (-1)^{\sigma} f([a_1|\cdots|a_m]) \otimes g([a_{m+1}|\cdots|a_{m+n}]),
\end{equation}
and hence that the cup product makes $\Cbul(A,k)$ into a (unital) differential graded super\-algebra.

Now suppose that $A$ is finite-dimensional. Then $\Bbar_n(A)$ is finite-dimensional, and there exist natural isomorphisms of graded super\-spaces
\[
C^n(A,M) \cong \Hom_k(\Bbar_n(A),M) \cong M \otimes \Hom_k(\Bbar_n(A),k) \cong M \otimes C^n(A,k)
\]
Given $m \in M$ and $f \in C^n(A,k)$, the $A$-supermodule homomorphism corresponding to $m \otimes f \in M \otimes C^n(A,k)$ is defined for $a,a_1,\ldots,a_n \in A$ by
\[
(m \otimes f)(a[a_1|\cdots|a_n]) = (-1)^{\ol{a} \cdot (\ol{m}+\ol{f})} (a.m) \cdot f([a_1|\cdots|a_n]).
\]
Given $m \in M$, write $\wt{m} \in C^0(A,M)$ for the homomorphism satisfying $\wt{m}(a) = (-1)^{\ol{m} \cdot \ol{a}}a.m$. Then $m \otimes f$ identifies with the cup product $\wt{m} \odot f$. Under this identification, the differential $\partial: M \rightarrow M \otimes C^1(A,k) \cong M \otimes A^*$ satisfies $\partial(\wt{m}) = (\sum_i \wt{m}_i \otimes f_i) - m \otimes \ve$, where $\sum_i f_i(a).m_i = (-1)^{\ol{m} \cdot \ol{a}}a.m$, and $\ve \in C^1(A,k) \cong \Hom_k(A,k)$ is the augmentation map. Using \eqref{eq:cupproductN=k}, one can check that the cup product induces an isomorphism of graded super\-algebras $\Cbul(A,k) \cong T(C^1(A,k))$, that is, $\Cbul(A,k)$ is isomorphic to the tensor algebra on $C^1(A,k)$. Thus, \eqref{eq:differentialderivation} implies that $\Cbul(A,M)$ identfies as a differential graded right $\Cbul(A,k)$-super\-module with $M \otimes \Cbul(A,k)$, and that the differential on $\Cbul(A,M)$ is determined by its restrictions to $M \cong C^0(A,M)$ and $C^1(A,k)$.\footnote{One can also check that $\wt{m} \odot f = (-1)^{\ol{m} \cdot \ol{f}} f \odot \wt{m}$, though the cup product of cochains is not graded-commutative in general. Then $\Cbul(A,M)$ also identifies as a differential graded left $\Cbul(A,k)$-super\-module with $\Cbul(A,k) \otimes M$.}

Continue to assume that $A$ is a finite-dimensional super\-bialgebra. The structure maps on $A$ induce by duality the structure of a super\-bialgebra on $A^*$. Write $\Delta_A$ and $\Delta_{A^*}$ for the coproducts on $A$ and $A^*$, respectively. Then given $f, g \in A^*$, the product $fg \in A^*$ satisfies $(fg)(a) = (f \otimes g) \circ \Delta_A(a)$ for all $a \in A$, while the coproduct $\Delta_{A^*}$ satisfies $\Delta_{A^*}(f)(a' \otimes a'') = f(a'a'')$ for all $a',a'' \in A$. Here we have identified $A^* \otimes A^*$ with $(A \otimes A)^*$ as in \eqref{eq:signsforproductofmaps}. The multiplicative identity in $A^*$ is the augmentation map $\ve: A \rightarrow k$, while the augmentation map on $A^*$ is defined by $f \mapsto f(1)$. Now $C^n(A,M) \cong M \otimes (A^*)^{\otimes n}$ as graded super\-spaces, and under this identification one has
\begin{equation} \label{eq:coproductdifferential}
\begin{split}
\partial(m \otimes f_1 \otimes \cdots \otimes f_n) &= \Delta_M(m) \otimes f_1 \otimes \cdots \otimes f_n + (-1)^{n+1} m \otimes f_1 \otimes \cdots \otimes f_n \otimes \ve \\
&\phantom{=} \textstyle + \sum_{i=1}^n (-1)^i m \otimes f_1 \otimes \cdots \otimes \Delta_{A^*}(f_i) \otimes \cdots \otimes f_n.
\end{split}
\end{equation}
Here $\Delta_M(m) = \partial(\wt{m}) + m \otimes \ve$, that is, $\Delta_M(m) \in M \otimes A^* \cong \Hom_k(A,M)$ is the function satisfying $\Delta_M(m)(a) = (-1)^{\ol{m} \cdot \ol{a}}a.m$. Under the identifications $\Cbul(A,M) \cong M \otimes (A^*)^{\otimes \bullet}$ and $\Cbul(A,k) \cong (A^*)^{\otimes \bullet}$, the right action of $\Cbul(A,k)$ on $\Cbul(A,M)$ is induced by concatenation.

\section{Cohomology for restricted Lie super\-algebras} \label{section:cohomologyrls}

In this section we present a modified version of May's construction of a free resolution of the trivial module for a graded restricted Lie algebra, and use it to show that the cohomology ring for a finite-dimensional restricted Lie super\-algebra is a finitely-generated algebra. If $\chr(k) = 0$, then the modified version of May's construction that we present in Section \ref{subsection:Koszulresolution} identifies with the standard analogue of the Koszul resolution for Lie super\-algebras; cf.\ \cite[\S 3]{Kang:2000}. Throughout this section, we attempt to remain consistent with the notation used by May \cite{May:1966}.

\subsection{The Koszul resolution for a Lie super\-algebra} \label{subsection:Koszulresolution}

Let $L = \Lzero \oplus \Lone$ be a finite-dimensional Lie super\-algebra. Once and for all, fix bases $\set{x_1,\ldots,x_s}$ and $\set{y_1,\ldots,y_t}$ for $\Lzero$ and $\Lone$, respectively. Set $\Ybar(L) = \Lambda(\Lzero) \otimes \Gamma(\Lone)$. We consider $\Ybar(L)$ as a graded-commutative graded-cocommutative graded super\-bialgebra with product and coproduct maps induced by the usual product and coproduct maps on $\Lambda(\Lzero)$ and $\Gamma(\Lone)$. The subspace $\Ybar_1(L)$ of $\Ybar(L)$ is naturally isomorphic as a super\-space to $L$; write $s: L \rightarrow \Ybar_1(L)$ for the natural isomorphism. Since $p \neq 2$, there also exists a natural isomorphism $\Gamma_2(\Lone) \cong S^2(\Lone)$, and hence an isomorphism $\Ybar_2(L) \cong \Lambda_s^2(L)$.

There exists a right action of $L$ on $\Ybar(L)$ such that $(z_1z_2).u = z_1(z_2.u) + (-1)^{\ol{u} \cdot \ol{z_2}} (z_1.u)z_2$ for all $z_1,z_2 \in \Ybar(L)$ and $u \in L$ (i.e., $u$ acts by right superderivations), and such that $\subgrp{x}.u = s([x,u])$ and $\gamma_r(y).u = \gamma_{r-1}(y)s([y,u])$ for all $x \in \Lzero$, $y \in \Lone$, and $r \in \N$. The right action of $L$ on $\Ybar(L)$ then extends to a right action of the universal enveloping super\-algebra $U(L)$. Thus, given $z_1,z_2 \in \Ybar(L)$ and $u \in U(L)$, one has $(z_1z_2).u = \sum (-1)^{\ol{u_1} \cdot \ol{z_2}} (z_1.u_1)(z_2.u_2)$. Here $\Delta(u) = \sum u_1 \otimes u_2$ is the coproduct of $u$ written as a sum of simple tensors. Now define $Y(L)$ to be the corresponding smash product algebra $U(L) \# \Ybar(L)$. Then $Y(L)$ is a graded super\-algebra with $U(L)$ concentrated in external degree $0$. As a graded super\-space and as a left $U(L)$-super\-module, $Y(L) = U(L) \otimes \Ybar(L)$. The coproducts on $U(L)$, $\Lambda(\Lzero)$, and $\Gamma(\Lone)$ induce on $Y(L)$ the structure of a graded-cocommutative graded super\-bialgebra. From now on we denote the product in $Y(L)$ by juxtaposition. Then $Y(L)$ is spanned by the monomials
\begin{equation} \label{eq:standardmonomials}
u\subgrp{x_{i_1} \cdots x_{i_b}} \gamma_{a_1}(y_1) \cdots \gamma_{a_t}(y_t),
\end{equation}
with $u \in U(L)$, $1 \leq i_1 < \cdots < i_b \leq s$, and $a_j \in \N$.

\begin{theorem} \label{theorem:Koszulresolution} \textup{\cite[Theorem 5]{May:1966}}
There exists a differential $d: Y(L) \rightarrow Y(L)$ making $Y(L)$ into a differential graded super\-bialgebra, and into a left $U(L)$-free resolution of the trivial module. Given $u \in U(L)$, $x \in \Lzero$, and $y \in \Lone$, the differential $d$ satisfies
\begin{align*}
d(u) &= 0,\\
d(\subgrp{x}) &= x, \quad \text{and} \\
d(\gamma_r(y)) &= y\gamma_{r-1}(y) - \tfrac{1}{2}\subgrp{[y,y]}\gamma_{r-2}(y).
\end{align*}
\end{theorem}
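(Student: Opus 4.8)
The plan is to define $d$ on a spanning set, extend it multiplicatively, and then verify in turn that it is a well-defined derivation, that $d^2 = 0$, that $d$ is a coderivation, and finally that the resulting complex is an acyclic $U(L)$-free resolution. Since $Y(L)$ is spanned by the standard monomials \eqref{eq:standardmonomials}, I would declare $d$ to vanish on $U(L)$, to send $\subgrp{x} \mapsto x$ for $x \in \Lzero$ and $\gamma_r(y)$ to the stated expression for $y \in \Lone$, and to extend to all of $Y(L)$ via the graded Leibniz rule $d(z_1 z_2) = d(z_1)z_2 + (-1)^{\deg(z_1)} z_1 d(z_2)$. The first task is well-definedness: one checks that this prescription is consistent with the defining relations of $Y(L)$, namely the graded-commutativity relations in $\Ybar(L)$, the divided-power relations $\gamma_r(y)\gamma_s(y) = \binom{r+s}{r}\gamma_{r+s}(y)$ and $\gamma_r(y + y') = \sum_{i} \gamma_i(y)\gamma_{r-i}(y')$, and the smash-product relations that commute elements of $U(L)$ past elements of $\Ybar(L)$, so that the prescription on the building blocks $U(L)$, $\subgrp{x_i}$, $\gamma_r(y_j)$ determines a single derivation of $Y(L)$.

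Next I would verify $d \circ d = 0$. A short sign computation shows that the cross terms in $d^2(z_1 z_2)$ cancel, so $d^2$ is again a derivation; it therefore suffices to check $d^2 = 0$ on the building blocks. The cases $U(L)$ and $\subgrp{x}$ are immediate, and the divided-power case is the only genuine computation: applying $d$ twice to $\gamma_r(y)$ produces a term $y^2\gamma_{r-2}(y)$ together with $\subgrp{[y,y]}$-terms, and these cancel precisely upon using the enveloping-algebra relation $y^2 = \tfrac12[y,y]$ for odd $y$ (valid since $\chr(k) \neq 2$) together with the smash commutation of $y$ past $\subgrp{[y,y]}$. A parallel argument handles the coalgebra structure: the deviation $\Delta \circ d - (d \otimes 1 + 1 \otimes d)\circ \Delta$ is determined by its values on the building blocks, and these vanish, so $d$ is a coderivation and $Y(L)$ becomes a differential graded super-bialgebra.

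Freeness requires no work: by construction $Y(L) = U(L) \otimes \Ybar(L)$ as a left $U(L)$-module, and the standard monomial basis of $\Ybar(L)$ exhibits each external-degree component $Y(L)_n$ as a free $U(L)$-module of finite rank. The substance of the theorem is therefore acyclicity. I would filter $Y(L)$ by total degree, combining the PBW filtration on the tensor factor $U(L)$ with the external grading on $\Ybar(L)$; since $d$ sends each $\subgrp{x}$ and each $\gamma_r(y)$ to a leading term that trades one unit of external degree for one unit of PBW degree, it preserves this filtration, while the bracket correction term $-\tfrac12\subgrp{[y,y]}\gamma_{r-2}(y)$, whose $U(L)$-factor is trivial, lies in strictly lower filtration degree. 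Hence in the associated graded, where $\gr U(L) \cong S_s(L) = S(\Lzero) \otimes \Lambda(\Lone)$, the induced differential $\bar d$ satisfies $\bar d(\subgrp{x}) = x$ and $\bar d(\gamma_r(y)) = y\gamma_{r-1}(y)$. Rearranging tensor factors, this associated graded complex splits as the tensor product of an even part $S(\Lzero) \otimes \Lambda(\Lzero)$ and an odd part $\Lambda(\Lone) \otimes \Gamma(\Lone)$, which in turn factor over the chosen bases into elementary complexes: each even $x_i$ contributes an ordinary Koszul complex, and each odd $y_j$ contributes a divided-power complex with $\bar d(\gamma_r(y_j)) = y_j \gamma_{r-1}(y_j)$. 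Using $y_j^2 = 0$ in $S_s(L)$ one checks the latter is acyclic in positive external degree with $H_0 \cong k$, as is the Koszul complex, so by the K\"unneth theorem $\gr Y(L)$ is a resolution of $k$.

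Finally, the spectral sequence of this filtration has $E_0$-page $(\gr Y(L), \bar d)$, which by the previous paragraph has homology $k$ concentrated in total degree zero; the spectral sequence therefore degenerates and $(Y(L), d)$ has homology $k$ as well, completing the proof that it is a $U(L)$-free resolution of the trivial module (convergence is unproblematic, since the filtration is exhaustive and, in each external degree, bounded). I expect the acyclicity step to be the main obstacle, since it is where the divided powers are genuinely required — in positive characteristic the naive symmetric/exterior Koszul complex on the odd variables fails to resolve $k$ — and where one must correctly identify the associated graded differential. The most error-prone, if least conceptual, ingredient is the sign bookkeeping in the verification that $d^2 = 0$ on the $\gamma_r(y)$, where the smash-product commutation relations between $U(L)$ and $\Ybar(L)$ must be applied with care.
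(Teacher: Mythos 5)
Your proposal is correct and follows essentially the same route as the paper: your filtration (PBW degree on $U(L)$ plus external degree on $\Ybar(L)$) is exactly the paper's $F_n Y(L) = \bigoplus_{i+j=n} F_i U(L) \otimes \Ybar_j(L)$, and your identification of the associated graded complex with the Koszul complex of the underlying abelian Lie superalgebra, followed by factorization into rank-one pieces and the K\"unneth theorem, is precisely the paper's argument. The only difference is cosmetic: you spell out the one-dimensional acyclicity computations and the spectral-sequence convergence that the paper dispatches as ``checked in hand'' and ``a standard filtration argument.''
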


\begin{proof}
One can check that the given formulas are consistent with the relations in $Y(L)$, and hence that a derivation $d$ satisfying the given formulas does indeed exist. Next, writing $\Delta$ for the coproduct on $Y(L)$, one can check that the relations $d \circ d = 0$ and $\Delta \circ d = (d \otimes 1 + 1 \otimes d)\circ \Delta$ hold on the generators for $Y(L)$. Then by the derivation property of $d$, it follows that these relations hold on all of $Y(L)$. Finally, to show that $Y(L)$ defines a resolution of $k$, observe that there exists an increasing filtration $F_0 Y(L) \subset F_1 Y(L) \subset \cdots$ on $Y(L)$, with $F_n Y(L) = \bigoplus_{i+j = n} F_i U(L) \otimes \Ybar_j(L)$. Here $F_0 U(L) \subset F_1 U(L) \subset \cdots$ is the monomial-length filtration on $U(L)$, with $F_i U(L)$ spanned by all PBW-monomials in $U(L)$ of length at most $i$. Then $d: Y(L) \rightarrow Y(L)$ is filtration-preserving, and the associated graded complex $\gr Y(L) = \bigoplus_{n \geq 0} F_nY(L) / F_{n-1} Y(L)$ is isomorphic to $Y(\lab)$, the Koszul complex for the abelian Lie super\-algebra with the same underlying super\-space as $L$. Now by a standard filtration argument it suffices to show that $Y(\lab)$ is a resolution of $k$. If $\lab$ is one-dimensional, this can be checked in hand. In general, the claim follows via the K\"{u}nneth formula and from the observation that $Y(\lab)$ is isomorphic as a complex to the tensor product of $\dim \Lzero$ copies of $Y(k_{\ol{0}})$ with $\dim \Lone$ copies of $Y(k_{\ol{1}})$. Here $k_{\ol{0}}$ and $k_{\ol{1}}$ denote one-dimensional abelian Lie super\-algebras concentrated in $\Z_2$-degrees $\ol{0}$ and $\ol{1}$, respectively.
\end{proof}

The differential defined in Theorem \ref{theorem:Koszulresolution} is called the \emph{Koszul differential} on $Y(L)$.

\begin{remark} \label{remark:trivdif}
The augmentation map on $U(L)$ induces a natural projection $\ve: Y(L) \rightarrow \Ybar(L)$. If $L$ is abelian, it then follows that $\ve \circ d = 0$.
\end{remark}

\begin{remark}
The assignment $L \mapsto Y(L)$ is functorial in $L$. If $\varphi: L \rightarrow L'$ is a homomorphism of Lie super\-algebras, then the corresponding map $Y(\varphi): Y(L) \rightarrow Y(L')$ satisfies $Y(\varphi)(u) = \varphi(u)$, $Y(\varphi)(\subgrp{x}) = \subgrp{\varphi(x)}$, and $Y(\varphi)(\gamma_r(y)) = \gamma_r(\varphi(y))$ for $u \in U(L)$, $x \in \Lzero$, and $y \in \Lone$.
\end{remark}

\begin{remark}
The differential on $Y(L)$ is given by the following explicit formula:
\begin{align*}
d(u\subgrp{x_{i_1}\ldots x_{i_b}}&\gamma_{a_1}(y_1) \cdots \gamma_{a_t}(y_t)) \\
&=\textstyle \sum_{j=1}^b (-1)^{j+1} ux_{i_j}\subgrp{x_{i_1}\cdots \wh{x}_{i_j} \cdots x_{i_b}} \gamma_{a_1}(y_1) \cdots \gamma_{a_t}(y_t) \\
&\textstyle + \sum_{j=1}^t (-1)^b uy_j \subgrp{x_{i_1} \cdots x_{i_b}} \gamma_{a_1}(y_1) \cdots \gamma_{a_i-1}(y_i) \cdots \gamma_{a_t}(y_t) \\
&\textstyle + \sum_{1 \leq j < \ell \leq b} (-1)^{j+\ell} u\subgrp{[x_{i_j},x_{i_\ell}]x_{i_1} \cdots \wh{x}_{i_j} \cdots \wh{x}_{i_\ell} \cdots x_{i_b}} \gamma_{a_1}(y_1) \cdots \gamma_{a_t}(y_t) \\
&\textstyle +\sum_{j=1}^b \sum_{\ell=1}^t (-1)^j u\subgrp{x_{i_1} \cdots \wh{x}_{i_j} \cdots x_{i_b}} \gamma_1([x_{i_j},y_{\ell}]) \gamma_{a_1}(y_1) \cdots \gamma_{a_\ell-1}(y_\ell) \cdots \gamma_{a_t}(y_t) \\
&\textstyle -\sum_{1 \leq j < \ell \leq t} u \subgrp{[y_j,y_\ell]x_{i_1} \cdots x_{i_b}} \gamma_{a_1}(y_1) \cdots \gamma_{a_j-1}(y_j) \cdots \gamma_{a_\ell-1}(y_\ell) \cdots \gamma_{a_t}(y_t) \\
&\textstyle -\sum_{j=1}^t \frac{1}{2} u\subgrp{[y_j,y_j,]x_{i_1} \cdots x_{i_b}}\gamma_{a_1}(y_1) \cdots \gamma_{a_j-2}(y_j) \cdots \gamma_{a_t}(y_t).
\end{align*}
If $\chr(k) = 0$, then this agrees with the formula in \cite[\S3]{Kang:2000} after identifying $\gamma_a(y)$ with $\frac{1}{a!} y^a$.
\end{remark}

\subsection{The cohomology ring for a Lie super\-algebra} \label{subsection:ordinarycohomology}

Let $M$ be a left $L$-super\-module. Given $n \in \N$, set $C^n(L,M) = \Hom_{U(L)}(Y_n(L),M)$. Then as in \eqref{eq:cochaindifferential}, the Koszul differential on $Y(L)$ induces a differential $\partial$ on $C^\bullet(L,M)$, making $C^\bullet(L,M)$ into a cochain complex. Now the Lie super\-algebra cohomology group $\opH^n(L,M):= \opH^n(U(L),M)$ is the $n$-th cohomology group of the complex $\Cbul(L,M)$. Since the coproduct on $Y(L)$ defines a coassociative diagonal approximation $Y(L) \rightarrow Y(L) \otimes Y(L)$, the cup product makes $\Cbul(L,k)$ into a differential graded super\-algebra.

\begin{lemma} \label{lemma:cochainring}
$\Cbul(L,k)$ is naturally isomorphic as a graded super\-algebra to $\Lambda_s(L^*)$.
\end{lemma}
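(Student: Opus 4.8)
The plan is to identify $\Cbul(L,k)$ with the graded dual of the super\-bialgebra $\Ybar(L) = \Lambda(\Lzero) \otimes \Gamma(\Lone)$, and then to compute that dual explicitly. First I would use that $Y_n(L) = U(L) \otimes \Ybar_n(L)$ is free as a left $U(L)$-super\-module on any homogeneous basis of $\Ybar_n(L)$. Restricting a cochain along the inclusion $\Ybar_n(L) \hookrightarrow Y_n(L)$ then gives an isomorphism of graded super\-spaces $C^n(L,k) = \Hom_{U(L)}(Y_n(L),k) \cong \Ybar_n(L)^*$: the trivial action forces $f(u.z) = \ve(u)f(z)$, so $f$ is determined by its values on $\Ybar_n(L)$, and conversely every functional on $\Ybar_n(L)$ extends uniquely. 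As $L$ is finite-dimensional each $\Ybar_n(L)$ is finite-dimensional, and summing over $n$ produces an isomorphism of graded super\-spaces $\Cbul(L,k) \cong \Ybar(L)^*$ onto the graded dual.

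Next I would verify that this is an isomorphism of algebras, where $\Ybar(L)^*$ carries the convolution product dual to the coproduct of $\Ybar(L)$. The cup product on $\Cbul(L,k)$ uses the diagonal approximation $D = \Delta$, the coproduct of $Y(L)$ itself. Since that coproduct is induced from those of $U(L)$, $\Lambda(\Lzero)$, and $\Gamma(\Lone)$, the subspace $\Ybar(L)$ is a sub\-super\-bialgebra and $\Delta$ restricts on it to the coproduct $\Delta_{\Ybar}$. Hence for $z \in \Ybar_{m+n}(L)$ the value $(f \odot g)(z) = (f \otimes g)(\Delta_{\Ybar}(z))$ depends only on the $\Ybar_m(L) \otimes \Ybar_n(L)$ component of $\Delta_{\Ybar}(z)$, which is exactly the defining formula for the product of $f$ and $g$ in the graded dual. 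Because $\Ybar(L)$ is graded-cocommutative, this dual product is graded-commutative, consistent with the target $\Lambda_s(L^*)$; note the cup product of cochains need not be commutative in general, but here cocommutativity of the resolution makes it so.

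It then remains to compute $\Ybar(L)^*$. Writing $\Ybar(L) = \Lambda(\Lzero) \otimes \Gamma(\Lone)$ as graded super\-bialgebras, its graded dual is $\Lambda(\Lzero)^* \otimes \Gamma(\Lone)^*$. I would invoke the self-duality of the exterior Hopf algebra, $\Lambda(\Lzero)^* \cong \Lambda(\Lzero^*)$, together with the classical duality between divided and symmetric powers, $\Gamma(\Lone)^* \cong S(\Lone^*)$, under which the basis dual to the monomials $\gamma_{a_1}(y_1) \cdots \gamma_{a_t}(y_t)$ multiplies as ordinary products of the $y_j^*$. Matching bidegrees (a $\gamma$-monomial of external degree $n$ and parity $\ol{n}$ dualizes to a polynomial of bidegree $(n,\ol{n})$) yields $\Ybar(L)^* \cong \Lambda(\Lzero^*) \otimes S(\Lone^*)$, which by Section \ref{subsection:superexterioralgebra} is precisely $\Lambda_s(L^*)$, using $(L^*)_{\ol{0}} = \Lzero^*$ and $(L^*)_{\ol{1}} = \Lone^*$. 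Naturality in $L$ follows from the functoriality of the assignment $L \mapsto Y(L)$.

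The step I expect to be the main obstacle is the sign bookkeeping. In the $(\Z \times \Z_2)$-graded setting the cup product carries Koszul signs coming from the super\-tensor product of maps in \eqref{eq:signsforproductofmaps} and from $D = \Delta$, while the evaluation pairing $V \cong (V^*)^*$ and the two duality identifications $\Lambda(\Lzero)^* \cong \Lambda(\Lzero^*)$ and $\Gamma(\Lone)^* \cong S(\Lone^*)$ each introduce signs of their own. The structural isomorphisms are routine, but one must check that all external-degree and parity signs cancel, so that the convolution product genuinely reproduces the defining relations of $\Lambda_s(L^*)$ on the standard monomial basis; this sign compatibility is the delicate point.
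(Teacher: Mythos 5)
Your proposal is correct, and its opening step coincides with the paper's: both identify $C^n(L,k) = \Hom_{U(L)}(Y_n(L),k) \cong \Ybar_n(L)^*$ as graded superspaces and both exploit graded-cocommutativity of the coproduct on $Y(L)$. Where you genuinely diverge is in how the resulting dual algebra gets identified with $\Lambda_s(L^*)$. You compute the full graded dual Hopf algebra: cup product equals convolution, the dual of a tensor product factors, and then you invoke the dualities $\Lambda(\Lzero)^* \cong \Lambda(\Lzero^*)$ and $\Gamma(\Lone)^* \cong S(\Lone^*)$ in their super-signed versions. The paper instead runs a generators--freeness--dimension argument: since $\Lambda_s(L^*)$ is the \emph{free} graded-commutative graded superalgebra on $L^*$, and since $\dim_k \Lambda_s^n(L^*) = \dim_k \Ybar_n(L)^* = \dim_k C^n(L,k)$ in each degree, it suffices to show that the subalgebra of $\Cbul(L,k)$ generated by $C^1(L,k) \cong L^*$ is graded-commutative (your cocommutativity observation) and free; freeness is obtained by checking that each cup-product monomial $x_{i_1}^* \odot \cdots \odot x_{i_b}^* \odot {y_1^*}^{\odot a_1} \odot \cdots \odot {y_t^*}^{\odot a_t}$ evaluates to $\pm 1$ on the matching standard basis monomial of $\Ybar(L)$ and to $0$ on all others, so these monomials are linearly independent. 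This is exactly where the paper's route relieves the burden you flag in your last paragraph: the dimension count means one never needs to verify that the convolution product reproduces all structure constants of $\Lambda_s(L^*)$ with correct signs, only that the diagonal evaluations are nonzero up to sign. Your route is more conceptual and does go through --- for instance, for odd $y$ the two Koszul signs arising in $\bigl(\gamma_a(y)^* \cdot \gamma_b(y)^*\bigr)(\gamma_{a+b}(y))$ are both $(-1)^{ab}$ and cancel, so dual monomials multiply without binomial coefficients and $\Gamma(\Lone)^*$ is indeed the polynomial algebra $S(\Lone^*)$ rather than a divided power algebra --- but it carries the full sign verification as a genuine obligation, whereas the paper's freeness-plus-dimension trick sidesteps it.
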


\begin{proof}
As a graded super\-space,
\[
C^n(L,k) \cong \Hom_k(\Ybar_n(L),k) \cong \bigoplus_{i+j=n} \Hom_k(\Lambda^i(\Lzero) \otimes \Gamma_j(\Lone),k).
\]
Then by dimension comparison, it suffices to show that $C^1(L,k) \cong L^*$ generates a free graded-commutative graded subsuper\-algebra of $\Cbul(L,k)$. Let $x_1^*,\ldots,x_s^*$ and $y_1^*,\ldots,y_t^*$ be the bases for ${\Lzero}^*$ and ${\Lone}^*$, respectively, that are dual to the fixed bases for $\Lzero$ and $\Lone$. We consider $x_1^*,\ldots,x_s^*$ as elements of $L^*$ via the projection $L \rightarrow \Lzero$, and similarly for $y_1^*,\ldots,y_t^*$. As elements of $C^1(L,k)$, the $x_i^*$ and $y_j^*$ are of internal degrees $\ol{0}$ and $\ol{1}$, respectively. Since the coproduct on $Y(L)$ is graded-cocommutative, it follows that the $x_i^*$ and $y_j^*$ generate a graded-commutative graded super\-algebra in $\Cbul(L,k)$. Given $a \in \N$, write ${y_j^*}^{\odot a}$ for the $a$-fold cup product $y_j^* \odot \cdots \odot y_j^*$. Then one can check for $a_1,\ldots,a_t \in \N$ and $1 \leq i_1 < \cdots < i_b \leq s$ that
\begin{equation} \label{eq:dualmonomial}
x_{i_1}^* \odot \cdots \odot x_{i_b}^* \odot {y_1^*}^{\odot a_1} \odot \cdots \odot {y_t^*}^{\odot a_t}
\end{equation}
evaluates to $\pm 1$ on the monomial $\subgrp{x_{i_1} \cdots x_{i_b}} \gamma_{a_1}(y_1) \cdots \gamma_{a_t}(y_t)$, and evaluates to $0$ on all other stan\-dard basis monomials in $\Ybar(L)$. Then the monomials of the form \eqref{eq:dualmonomial} form a linearly independent subset of $\Cbul(L,k)$, so it follows that the $x_i^*$ and $y_j^*$ generate a free subalgebra of $\Cbul(L,k)$.
\end{proof}

\begin{remark} \label{remark:braidedgraded}
Since $\Cbul(L,k)$ is a graded-commutative graded super\-algebra under the cup product, and since the product on $\Hbul(L,k)$ is induced by the product on $\Cbul(L,k)$, it follows that $\Hbul(L,k)$ is a graded-commutative graded super\-algebra. More generally, Mastnak et al.\ show in \cite[\S3]{Mastnak:2010} that if $R$ is a bialgebra in an abelian braided monoidal category $\mathcal{C}$, then the Hochschild cohomology ring of $R$ (with trivial coefficients) is a braided graded-commutative algebra in $\mathcal{C}$. Since the category of $k$-super\-spaces is an abelian braided monoidal category with braiding $V \otimes W \rightarrow W \otimes V$ defined by $v \otimes w \mapsto (-1)^{\ol{v} \cdot \ol{w}} w \otimes v$, this implies that the cohomology ring $\Hbul(A,k)$ of a $k$-super\-bialgebra $A$ is a graded-commutative graded super\-algebra. In particular, the subalgebra
\[
\opH(A,k) := \Hev(A,k)_{\ol{0}} \oplus \Hodd(A,k)_{\ol{1}}
\]
of $\Hbul(A,k)$ is commutative in the ordinary sense, and the elements in the complementary space
\[
\Hev(A,k)_{\ol{1}} \oplus \Hodd(A,k)_{\ol{0}}
\]
are nilpotent by the assumption that $\chr(k) \neq 2$.

A theory of cohomological support varieties for $A$ would typically start with the choice of a finitely-generated commutative subalgebra of $\Hbul(A,k)$. Thus, if $\Hbul(A,k)$, and hence also $\opH(A,k)$, is a finitely-generated $k$-algebra, then $\opH(A,k)$ would seem to be a natural choice for this purpose. This convention differs from those of Liu \cite[\S4]{Liu:2012} or Bagci \cite[\S4]{Bagci:2012a}. Because he adopts a different definition for the cohomology ring $\Hbul(A,k)$ than we do, Liu's conventions are equivalent to defining cohomological support varieties using the subalgebra $\Hev(A,k)_{\ol{0}}$, while Bagci defines them using the (supercommutative) subalgebra $\Hev(A,k) = \Hev(A,k)_{\ol{0}} \oplus \Hev(A,k)_{\ol{1}}$.
\end{remark}

Let $M$ be a left $L$-super\-module. Then reasoning as in Section \ref{subsection:barcomplex}, and using the fact that $\Ybar_n(L)$ is finite-dimensional, there exists an isomorphism of differential graded right $\Cbul(L,k)$-super\-modules $\Cbul(L,M) \cong M \otimes \Cbul(L,k)$. Here $m \in M$ identifies with the homomorphism $\wt{m} \in C^0(L,M)$ satisfying $\wt{m}(u) = (-1)^{\ol{u} \cdot \ol{m}}u.m$ for $u \in U(L)$. Given $f \in C^n(L,k)$, the tensor product $m \otimes f \in M \otimes C^n(L,k)$ identifies with the cup product $\wt{m} \odot f$. The differential $\partial: M \rightarrow M \otimes C^1(L,k) \cong M \otimes L^*$ satisfies $\partial(\wt{m}) = \sum_i \wt{m}_i \otimes f_i$, where the $m_i \in M$ and $f_i \in L^*$ are such that $\sum_i f_i(z).m_i = (-1)^{\ol{z} \cdot \ol{m}}z.m$ for each $z \in L$.

Since $\Cbul(L,k) \cong \Lambda_s(L^*)$ is generated as a graded super\-algebra by $C^1(L,k) \cong \Lambda_s^1(L^*) \cong L^*$, the differential on $\Cbul(L,M)$ is completely determined by its restrictions to $M \cong C^0(L,M)$ and $C^1(L,k)$. As remarked above, $\Ybar_2(L)$ is naturally isomorphic to $\Lambda_s^2(L)$ by the assumption $\chr(k) = p \neq 2$, and one can check that the Lie bracket $[\cdot,\cdot]: L \otimes L \rightarrow L$ factors through a linear map $\Lambda_s^2(L) \rightarrow L$. Then $\partial^1: C^1(L,k) \rightarrow C^2(L,k)$ identifies with a linear map $\Lambda_s^1(L)^* \rightarrow \Lambda_s^2(L)^*$. Under this identification, one can check that if $z_1,z_2 \in L$, then $\partial^1(f)(z_1 \wedge z_2) = f([z_1,z_2])$, i.e., that $\partial^1$ identifies with the transpose of the Lie bracket.

\begin{remark} \label{remark:exteriorduality}
The coproduct on $\Lambda_s(L)$ induces an algebra structure on $\bigoplus_{n \in \N} \Lambda_s^n(L)^*$, the graded dual of $\Lambda_s(L)$. Moreover, the identification $\Lambda_s^1(L^*) \cong \Lambda_s^1(L)^*$ induces a natural homomorphism of graded super\-algebras $\varphi: \Lambda_s(L^*) \rightarrow \bigoplus_{n \in \N} \Lambda_s^n(L)^*$. If $\chr(k) = 0$, then $\varphi$ is an isomorphism, but if $\chr(k) = p > 0$, the algebras are no longer isomorphic. Indeed, if $p > 0$, then the odd elements in $\Lambda_s^1(L)^*$ are nilpotent of degree $p$, whereas the odd elements in $\Lambda_s^1(L^*)$ are not nilpotent. Still, even if $\chr(k) = p > 0$, $\varphi$ induces an isomorphism in external degrees $< p$.
\end{remark}

\begin{theorem} \label{theorem:fgforU(L)}
Let $k$ be a field of characteristic $p > 2$, let $L$ be a finite-dimensional Lie super\-algebra over $k$, and let $M$ be a left $L$-supermodule. Let $S({\Lone}^*)^p$ be the subalgebra of $S({\Lone}^*)$ generated by all $p$-th powers in $S({\Lone}^*)$. Then $\Hbul(L,M)$ is right noetherian over $\Hbul(L,k)$, and $\Hbul(L,k)$ is right noetherian over $S({\Lone}^*)^p$. In particular, $\Hbul(L,k)$ is a finitely-generated graded super\-algebra.
\end{theorem}

\begin{proof}
Since $\Cbul(L,M) \cong M \otimes \Cbul(L,k)$ as a differential graded right $\Cbul(L,k)$-super\-module, and since $\Cbul(L,k) \cong \Lambda_s(L^*) = \Lambda({\Lzero}^*) \otimes S({\Lone}^*)$ as a graded-commutative graded super\-algebra, it is clear that $\Cbul(L,M)$ and $\Cbul(L,k)$ are each finitely-generated as right modules over the commutative ring $S({\Lone}^*)^p$. Now let $f \in S({\Lone}^*)$ be of external degree $m$, hence of internal degree $\ol{m}$. Then $\partial(f)$ is of external degree $m-1$ and of internal degree $\ol{m}$, so that
\[
f \odot \partial(f) = (-1)^{m \cdot (m-1)} (-1)^{\ol{m} \cdot \ol{m}} \partial(f) \odot f = (-1)^m \partial(f) \odot f
\]
by the graded-commutativity of $\Cbul(L,k)$. From this and from the derivation property of $\partial$, it follows that $\partial(f^{\odot p}) = p \cdot \partial(f) \odot f^{\odot (p-1)} = 0$, and hence that $S({\Lone}^*)^p$ consists of cocycles in $\Cbul(L,k)$. Then $\Hbul(L,k)$ and $\Hbul(L,M)$ inherit the structure of right $S({\Lone}^*)^p$-modules, and from this observation the theorem then follows. 
\end{proof}

\begin{remark} \label{remark:cohomologyabelianL}
The cohomology ring $\Hbul(L,k)$ need not be finite-dimensional. If $L$ is abelian, it follows from Remark \ref{remark:trivdif} that the differential on $\Cbul(L,k)$ is trivial, and hence that $\Hbul(L,k)$ is naturally isomorphic to $ \Lambda_s(L^*)$. In particular, if $L = \Lone$, then $\Hbul(L,k) \cong S(L^*)$. Here $U(L) \cong \Lambda(L)$, so the isomorphism $\Hbul(L,k) \cong S(L^*)$ expresses the classical fact that the cohomology ring of an exterior algebra is a polynomial algebra generated in cohomological degree one \cite{Priddy:1970}.
\end{remark}

\begin{remark} \label{remark:MPSW}
The isomorphism $\Hbul(U(L),k) = \Hbul(L,k) \cong S(L^*)$ in the case $L = \Lone$ shows that the published statement of \cite[Theorem 4.1]{Mastnak:2010} is incorrect, or at least unclear, if in the notation used there $N_i = 2$ for some $1 \leq i \leq \theta$. Indeed, taking $\theta = \dim L$, $q_{ij} = -1$ for $1 \leq i < j \leq \theta$, and $N_i = 2$ for $1 \leq i \leq \theta$, the algebra $S$ in \cite[\S 4]{Mastnak:2010} is just the universal enveloping super\-algebra $U(L)$. Now the calculation $\Hbul(U(L),k) \cong S(L^*)$ shows that $\Hbul(S,k)$ is an integral domain, whereas \cite[Theorem 4.1]{Mastnak:2010} seems to assert that $\opH^1(S,k)$ is spanned by square-zero elements $\eta_1,\ldots,\eta_\theta$, an interpretation supported by \cite[Remark 4.2]{Mastnak:2010}. This inter\-pretation is also consistent with taking $q_{ii} = 1$ for each $1 \leq i \leq \theta$ in the definition of $S$. In a private communication, Sarah Witherspoon indicated that this interpretation of the relations in \cite[(4.4)]{Mastnak:2010} is correct provided that $N_i \neq 2$ for all $i$, but that if $N_i = 2$, the relations should be modified to indicate that $\eta_i^2$ is a scalar multiple of the cohomology class they denote by $\xi_i$. Thus, if $N_i = 2$, one should take $q_{ii} = -1$ in the definition of $S$ (in which case the relation $x_i^{N_i} = 0$ is superfluous because $\chr(k) \neq 2$).
\end{remark}

\subsection{The Koszul resolution for a restricted Lie super\-algebra} \label{subsection:Koszulresolutionrls}

From now on assume that $\chr(k) = p > 2$, and that $L$ is a finite-dimensional restricted Lie super\-algebra over $k$. Denote the restriction mapping on $\Lzero$ by $x \mapsto x^{[p]}$, and let $V(L) = U(L)/\subgrp{x^p - x^{[p]}: x \in \Lzero}$ be the restricted enveloping super\-algebra of $L$. Given $x \in \Lzero$, the (right) action of $x^p - x^{[p]} \in U(L)$ on $\Ybar(L)$ is trivial. Then the right action of $U(L)$ on $\Ybar(L)$ factors through an action of $V(L)$. Now define $W(L)$ to be the corresponding smash product algebra $V(L) \# \Ybar(L)$. Equivalently, $W(L)$ is the quotient of $Y(L)$ by the two-sided ideal $\subgrp{x^p - x^{[p]}: x \in \Lzero}$. The algebra $W(L)$ inherits from $Y(L)$ the structure of a differential graded super\-bialgebra. By abuse of notation, we denote the induced differential on $W(L)$ by $d$. As a graded super\-space and as a left $V(L)$-super\-module, $W(L) = V(L) \otimes \Ybar(L)$.

Let $\Gamma'(\Lzero)$ be the divided polynomial algebra $\Gamma(\Lzero)$ with all external degrees multiplied by~$2$. Then $\Gamma'(\Lzero)$ is generated by the homogeneous elements $\gamma_r'(x)$ for $r \in \N$ and $x \in \Lzero$, with $\gamma_r'(x)$ of bidegree $(2r,\ol{0})$. Given $n \in \N$, denote the external degree-$n$ component of $\Gamma'(\Lzero)$ by $\Gamma_n'(\Lzero)$. Then $\Gamma_n'(\Lzero) = 0$ if $n$ is odd. Now set $X(L) = W(L) \otimes \Gamma'(\Lzero)$. As a graded $V(L)$-super\-module, $X(L) = V(L) \otimes \Xbar(L)$, where $\Xbar(L) = \Ybar(L) \otimes \Gamma'(\Lzero)$. Considering $\Gamma'(\Lzero)$ as a differ\-ential graded super\-algebra with trivial differential, we consider $X(L)$ as the tensor product of complexes. Then given $w \in W(L)$ and $b \in \Gamma'(\Lzero)$, the differential $d: X(L) \rightarrow X(L)$ is defined by $d(w \otimes b) = d(w) \otimes b$.

As in \eqref{eq:standardmonomials}, $X(L)$ is spanned by monomials
\begin{equation} \label{eq:X(L)standardmonomials}
v\subgrp{x_{i_1} \cdots x_{i_b}} \gamma_{a_1}(y_1) \cdots \gamma_{a_t}(y_t) \gamma_{c_1}'(x_1) \cdots \gamma_{c_s}'(x_s),
\end{equation}
with $v \in V(L)$, $1 \leq i_1 < \cdots < i_b \leq s$, and $a_j,c_j \in \N$. In particular, let $\mathcal{B} = \set{x_1,\ldots,x_s,y_1,\ldots,y_t}$ be the fixed basis for $L$, and let $\mathcal{P} \subset V(L)$ be the set of PBW monomial basis vectors in $V(L)$ arising from the particular ordered basis $\mathcal{B}$. Then the collection of monomials of the form \eqref{eq:X(L)standardmonomials} with $v \in \mathcal{P}$ forms a basis for $X(L)$, which we call the standard monomial basis for $X(L)$ with respect to $\mathcal{B}$. We call the vectors with $v = 1$ the standard monomial basis vectors for $\Xbar(L)$.

Given $n \in \Z$, set $R^n = \bigoplus_{i \geq 0} \Hom_k(\Gamma_i'(\Lzero),Y_{i-n}(\Lzero))$, and set $R = \bigoplus_{n \in \Z} R^n$. Then the algebra structure on $Y(\Lzero)$ together with the coproduct on $\Gamma'(\Lzero)$ induces on $R$ the structure of a differential graded super\-algebra. We denote the product of $r,r' \in R$ by $r \cup r'$. The differential on $R$ is defined by $\partial(r) = d \circ r$, where $d$ is the Koszul differential on $Y(\Lzero)$. The algebra $W(L)$ is a differential graded right $Y(\Lzero)$-super\-module via the natural maps $Y(\Lzero) \twoheadrightarrow W(\Lzero) \hookrightarrow W(L)$ and the right action of $W(L)$ on itself. Write $\sigma: W(L) \otimes Y(\Lzero) \rightarrow W(L)$ for the right action of $Y(\Lzero)$ on $W(L)$. Now given $r \in R$, $w \in W(L)$, and $b \in \Gamma'(\Lzero)$, define
\begin{equation} \label{eq:Raction}
(w \otimes b) \cap r = (-1)^{\deg(r) \deg(w \otimes b)} (\sigma \otimes 1) \circ (1 \otimes r \otimes 1) \circ (1 \otimes \Delta)(w \otimes b).
\end{equation}
Here $\Delta: \Gamma'(\Lzero) \rightarrow \Gamma'(\Lzero) \otimes \Gamma'(\Lzero)$ is the coproduct on $\Gamma'(\Lzero)$. Reindexing the $\Z$-grading on $R$ by $R_n := R^{-n}$, one can check that \eqref{eq:Raction} makes $X(L)$ into a differential graded right $R$-module. In other words, $d( (w \otimes b) \cap r) = (d(w) \otimes b) \cap r + (-1)^{\deg(w \otimes b)} (w \otimes b) \cap \partial(r)$.

Now let $t \in R^1$, and define $d_t: X(L) \rightarrow X(L)$ by
\[
d_t(w \otimes b) = d(w) \otimes b + (-1)^{\deg(w \otimes b)}(w \otimes b) \cap t.
\]
Then a straightforward calculation shows that $d_t^2(w \otimes b) = (w \otimes b) \cap (\partial(t) - t \cup t)$. We call $t$ a \emph{twisting cochain} if $\partial(t) - t \cup t$ acts as zero on $X(L)$.\footnote{May requires $\partial(t) - t \cup t$ to actually be the zero element in $R$, but if one adopts this more restrictive definition, the formula given in \cite[Lemma 6]{May:1966} does not then define a twisting cochain.} Writing $\pi$ for the natural projection $Y(\Lzero) \rightarrow W(\Lzero)$, $t$ is a twisting cochain if and only if $\pi \circ (\partial(t) - t \cup t) = 0$. If $t \in R^1$ is a twisting cochain, then $d_t$ makes $X(L)$ into a chain complex of free left $V(L)$-super\-modules. In the lemma below, we write $t_m$ for the restriction of $t$ to $\Gamma_m'(\Lzero)$, and consider $t_m$ as an element of $R^1$ with $t_m(\Gamma_n'(\Lzero)) = 0$ for all $n \neq m$. Then $t \cup t = \sum_{n \geq 0} \sum_{i+j = n} t_i \cup t_j$. Of course, $t_m = 0$ if $m$ is odd, because then $\Gamma_m'(\Lzero) = 0$, and $t_0 = 0$ because $Y_{-1}(\Lzero) = 0$.

Recall that since $\set{x_1,\ldots,x_s}$ is a basis for $\Lzero$, $\{\gamma_1'(x_1),\ldots,\gamma_1'(x_s)\}$ is a basis for $\Gamma_2'(\Lzero)$. Recall also the filtration $F_0 Y(\Lzero) \subset F_1 Y(\Lzero) \subset \cdots $ on $Y(\Lzero)$ defined in the proof of Theorem \ref{theorem:Koszulresolution}.

\begin{lemma} \label{lemma:twistingcochain} \textup{\cite[Lemma 6]{May:1966}}
There exists a twisting cochain $t$ such that
\[
t_2(\gamma_1'(x_i)) = x_i^{p-1}\subgrp{x_i} - \subgrp{x_i^{[p]}},
\]
and such that for $n > 1$, $t_{2n}(\Gamma_{2n}'(\Lzero)) \subset F_{np-1} Y_{2n-1}(\Lzero)$.\footnote{The formula defining $t_2$ in \cite[Lemma 6]{May:1966} is not linear in the symbol $\wt{y}$, and so does not make sense as written. Instead, the formula should be interpreted as defining the action of $t_2$ on a basis for $\Gamma_2'(\Lzero)$.}
\end{lemma}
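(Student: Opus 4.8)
The plan is to build $t = \sum_{n \geq 1} t_{2n}$ recursively on the external degree of $\Gamma'(\Lzero)$, where each $t_{2n} \in R^1$ is supported on $\Gamma_{2n}'(\Lzero)$ and takes values in $Y_{2n-1}(\Lzero)$. Writing $E = \partial(t) - t \cup t \in R^2$ for the defect, the twisting-cochain condition is exactly $\pi \circ E = 0$, and since $t \cup t = \sum_n \sum_{i+j=n} t_i \cup t_j$ is supported in even degrees with $t_0 = 0$, the component of the condition on $\Gamma_{2n}'(\Lzero)$ reads
\[
\pi \circ d \circ t_{2n} = \pi \circ \theta_n, \qquad \theta_n := \sum_{a+b=n,\; a,b \geq 1} t_{2a} \cup t_{2b},
\]
whose right-hand side depends only on $t_2, \dots, t_{2(n-1)}$. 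For the base case I would take $t_2$ as in the statement and check, using $d(\subgrp{x_i}) = x_i$, $d(u) = 0$ for $u \in U(\Lzero)$, and the derivation property of $d$, that $d(t_2(\gamma_1'(x_i))) = x_i^p - x_i^{[p]} \in \ker \pi$. Since the empty sum gives $\theta_1 = 0$, this is precisely the condition $\pi \circ \partial(t_2) = 0$ in external degree $2$.

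For the recursive step, suppose $t_2, \dots, t_{2(n-1)}$ have been chosen so that $\pi \circ E = 0$ in all external degrees $< 2n$. Because $\partial^2 = 0$, the product $\cup$ is associative, and $\partial$ is a derivation of $R$, one obtains the standard identity $\partial(E) = t \cup E - E \cup t$ (up to sign). Restricting to $\Gamma_{2n}'(\Lzero)$, every term on the right pairs a factor $t_{2a}$ (with $a \geq 1$) against a factor of $E$ supported in external degree $2b$ with $b < n$; since $\ker \pi$ is a two-sided ideal of $Y(\Lzero)$ and $\cup$ is assembled from the product of $Y(\Lzero)$, the inductive hypothesis $\pi \circ E|_{\Gamma_{2b}'(\Lzero)} = 0$ forces $\pi \circ \partial(E)|_{\Gamma_{2n}'(\Lzero)} = 0$. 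Because $E|_{\Gamma_{2n}'(\Lzero)} = -\theta_n$ before $t_{2n}$ is chosen, this says exactly that $\pi \circ d \circ \theta_n = 0$; that is, $\pi \circ \theta_n$ is a cycle in the complex $W_{2n-2}(\Lzero)$.

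The crux, and the step I expect to be the main obstacle, is to show that this cycle is a \emph{boundary} in $W(\Lzero)$, for then any $\pi$-preimage of a bounding chain defines $t_{2n}$ with $\pi \circ d \circ t_{2n} = \pi \circ \theta_n$, completing the induction. One cannot simply invoke acyclicity of $Y(\Lzero)$, since the quotient complex $W(\Lzero)$ has nontrivial homology $\Tor^{U(\Lzero)}_\bullet(V(\Lzero),k) \cong \Lambda(\zeta_1, \dots, \zeta_s)$, with $\zeta_i$ the class of $c_i := t_2(\gamma_1'(x_i))$. To control the obstruction and simultaneously obtain the filtration estimate, I would pass to the monomial-length filtration, whose associated graded is the Koszul complex of the abelian restricted Lie algebra with the same underlying space as $\Lzero$, exactly as in the proof of Theorem \ref{theorem:Koszulresolution}. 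In that commutative model the leading symbols of the $c_i$ are the degree-one classes $x_i^{p-1}\subgrp{x_i}$, which square to zero and pairwise anticommute, so $t_2 \cup t_2 = 0$ there and the explicit twisting cochain is just $t_2$ (with all higher terms zero); because $\theta_n$ is assembled from $\cup$-products governed by the \emph{cocommutative} coproduct of $\Gamma'(\Lzero)$, its leading symbol is a symmetrization of such products and vanishes. This forces $\theta_n \in F_{np-1} Y_{2n-2}(\Lzero)$, and a filtered lifting through $d$ — correcting successively lower filtration layers using the acyclicity of the associated graded — both produces the bounding chain and yields $t_{2n}(\Gamma_{2n}'(\Lzero)) \subset F_{np-1} Y_{2n-1}(\Lzero)$. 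Thus the vanishing of the obstruction class and the asserted filtration bound are two facets of the same leading-term cancellation, and the bulk of the work lies in making this filtered bookkeeping precise.
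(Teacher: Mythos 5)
Your recursion setup, the base case, and the derivation of the cycle condition $\pi \circ d \circ \theta_n = 0$ from the identity $\partial(E) = t \cup E - E \cup t$ are all sound, and you correctly isolate the crux. But your resolution of that crux fails as written. The bounding chain you seek lives in $W(\Lzero)$, and the associated graded of $W(\Lzero)$ under the monomial-length filtration is $W(\lab) = V(\lab) \otimes \Ybar(\lab)$, which is \emph{not} acyclic: by a K\"unneth computation its homology is $\Lambda(\ol{\zeta}_1,\ldots,\ol{\zeta}_s)$, where $\ol{\zeta}_i$ is the class of the symbol $x_i^{p-1}\subgrp{x_i}$, sitting in external degree $1$ and filtration degree $p$. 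So the non-acyclicity you flagged one paragraph earlier for $W(\Lzero)$ persists verbatim in the graded model, and ``correcting successively lower filtration layers using the acyclicity of the associated graded'' invokes a false statement exactly where the work has to happen. If instead you intended to do the filtered lifting in $Y(\Lzero)$, whose associated graded $Y(\lab)$ \emph{is} acyclic, then you would need the obstruction $d \circ \theta_n$ to vanish identically in $Y(\Lzero)$; your argument only yields $d \circ \theta_n \in \ker \pi$, which is strictly weaker.

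The gap can be closed along your lines, but it needs a statement finer than acyclicity: $H_m(W(\lab))$ is concentrated in filtration degree exactly $mp$, whereas your cycle has external degree $2n-2$ and lies in filtration $\leq np-1 < (2n-2)p$ for $n \geq 2$; hence every layer of the descent meets only vanishing homology, the cycle bounds, and the bounding chain can be taken in $F_{np-1}$. You would have to prove this concentration and carry the bookkeeping. The paper avoids the issue entirely by never passing to $W(\Lzero)$ in the inductive step: it maintains the \emph{exact} equation $d \circ t_{2i} = r_i$ in $Y(\Lzero)$ (not merely its image under $\pi$), so that a telescoping cancellation gives $d \circ r_n = (d \circ t_2) \cup t_{2(n-1)} - t_{2(n-1)} \cup (d \circ t_2)$; cocommutativity of the coproduct on $\Gamma'(\Lzero)$ turns this difference into a sum of commutators $[x^p - x^{[p]}, z]$, and these vanish because $x^p - x^{[p]}$ is central in $Y(\Lzero)$. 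Thus $d \circ r_n = 0$ on the nose, and the honest acyclicity of $Y(\Lzero)$ (together with that of $\gr Y(\Lzero)$, for the bound $\im(t_{2n}) \subset F_{np-1}Y_{2n-1}(\Lzero)$) produces $t_{2n}$. The exactness of the inductive equation, the use of cocommutativity, and the centrality of $x^p - x^{[p]}$ are precisely the three ingredients missing from your sketch.
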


\begin{proof}
The maps $t_{2n}$ for $n \geq 1$ are defined by induction on $n$. First define $t_2$ as in the statement of the lemma. Then $\pi \circ d \circ t_2(\gamma_1'(x_i)) = x_i^p - x_i^{[p]} = 0$, which is zero in $W(\Lzero)$. Next set $r_2 = t_2 \cup t_2$, and consider $d \circ r_2 = d \circ (t_2 \cup t_2) = (d \circ t_2) \cup t_2 - t_2 \cup (d \circ t_2)$. Since $\Gamma_4'(\Lzero)$ is spanned by monomials of the forms $\gamma_2'(x_i)$ and $\gamma_1'(x_i)\gamma_1'(x_j)$ with $i \neq j$, and since $\Gamma'(\Lzero)$ is cocommutative, it follows that if $b \in \Gamma_4'(\Lzero)$, then $d(r_2(b)) \in Y_1(\Lzero)$ is a sum of commutators of the form $[x^p - x^{[p]},z]$ with $x \in \Lzero$ and $z \in Y_1(\Lzero)$. Then $d(r_2(b)) = 0$ because $x^p - x^{[p]}$ is central in $Y(\Lzero)$. Since the Koszul differential on $Y(\Lzero)$ is exact, this implies that $\im(r_2) \subset \im(d_3)$, and hence that it is possible to non-uniquely define a linear map $t_4: \Gamma_4'(\Lzero) \rightarrow Y_3(\Lzero)$ such that $d \circ t_4 = r_2$. Moreover, the reader can check that $\im(r_2) \subset F_{2p-1} Y_2(\Lzero)$, so it is furthermore possible, by the exactness of the associated graded complex $\gr Y(\Lzero)$, to define $t_4$ so that $\im(t_4) \subset F_{2p-1} Y_3(\Lzero)$.

Now let $n > 2$, and assume by way of induction that $t_{2i}$ has been defined for $1 < i < n$ so that $d \circ t_{2i} = r_i := \sum_{j=1}^{i-1} t_{2j} \cup t_{2(i-j)}$ and so that $\im(t_{2i}) \subset F_{ip-1} Y_{2i-1}(\Lzero)$. We wish to define $t_{2n}$ so that it satisfies the same conditions with $i = n$. Observe that
\begin{align*}
d \circ r_n &= \textstyle \sum_{i=1}^{n-1} (d \circ t_{2i}) \cup t_{2(n-i)} - t_{2i} \cup (d \circ t_{2(n-i)}) \\
&= \textstyle (d \circ t_2) \cup t_{2(n-1)} - t_{2(n-1)} \cup (d \circ t_2) + \left( \sum_{i=2}^{n-1} r_i \cup t_{2(n-i)}\right) - \left( \sum_{i=1}^{n-2} t_{2i} \cup r_{n-i} \right).
\end{align*}
The last two terms in this equation sum to zero, so $d \circ r_n = (d \circ t_2) \cup t_{2(n-1)} - t_{2(n-1)} \cup (d \circ t_2)$. Now an argument like that in the case $n = 2$ shows that $d \circ r_n = 0$, so we conclude that it is possible to non-uniquely define $t_{2n}: \Gamma_{2n}'(\Lzero) \rightarrow Y_{2n-1}(\Lzero)$ such that $d \circ t_{2n} = r_n$. Moreover, the induction hypothesis also implies that $\im(r_n) \subset F_{np-1} Y_{2n-2}(\Lzero)$, so we may also define $t_{2n}$ so that $\im(t_{2n}) \subset F_{np-1} Y_{2n-1}(\Lzero)$. Now taking $t = \sum_{i \geq 1} t_{2i}$, one has $\pi \circ (\partial(t) - t \cup t) = 0$.
\end{proof}

\begin{remark} \label{remark:tnzero}
We may assume that $t_{2n}(b) = 0$ whenever $r_n(b) = 0$. In particular, if $L$ is abelian with trivial restriction mapping, then $r_2 = t_2 \cup t_2 = 0$, so it is possible to take $t_4 = 0$. Now arguing by induction on $n$, it follows that it is possible to take $t_{2n}  = 0$ for all $n \geq 2$. From now on we adopt this convention whenever $L$ is abelian with trivial restriction mapping.
\end{remark}

\begin{theorem} \textup{\cite[Theorem 8]{May:1966}} \label{theorem:X(L)resolution}
Let $t$ be a twisting cochain as in Lemma \ref{lemma:twistingcochain}. Then $d_t$ makes $X(L)$ into a $V(L)$-free resolution of $k$.
\end{theorem}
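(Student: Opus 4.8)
Since $X(L) = V(L) \otimes \Xbar(L)$ as a graded left $V(L)$-super\-module, freeness is immediate, so the real content is exactness: I must show $\opH_i(X(L),d_t) = 0$ for $i > 0$ and $\opH_0(X(L),d_t) = k$. The plan is to imitate the filtration argument from the proof of Theorem~\ref{theorem:Koszulresolution}, introducing a weight filtration on $X(L)$ whose associated graded complex simultaneously trivializes the Lie bracket, the restriction mapping, and all higher twisting terms, thereby reducing everything to an abelian Lie super\-algebra with trivial restriction. Concretely, I would assign weight $1$ to each algebra generator $x_i,y_j$ of $V(L)$ (the monomial-length filtration $F_\bullet V(L)$), weight equal to the external degree to each generator of $\Ybar(L)$, and weight $rp$ to each divided power $\gamma_r'(x_i) \in \Gamma'(\Lzero)$, and let $F_\bullet X(L)$ be the resulting increasing filtration.

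The first thing to verify is that $d_t$ preserves $F_\bullet X(L)$. The untwisted Koszul differential is weight-preserving: in the formulas of Theorem~\ref{theorem:Koszulresolution} the terms $\subgrp{[y,y]}\gamma_{r-2}(y)$ strictly lower weight, and the defining relation $x^p = x^{[p]}$ of $V(L)$ has leading term $x^p$, so $\gr V(L)$ is the restricted enveloping super\-algebra of the trivial-bracket, trivial-restriction abelianization. For the twisting part I would invoke the bound $\im(t_{2n}) \subset F_{np-1}Y_{2n-1}(\Lzero)$ from Lemma~\ref{lemma:twistingcochain}: on $\gamma_1'(x_i)$ (weight $p$) the leading term $x_i^{p-1}\subgrp{x_i}$ again has weight $p$ while the correction $\subgrp{x_i^{[p]}}$ has weight $1$, and each $t_{2n}$ with $n \geq 2$ maps weight $np$ strictly below $np$. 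Thus the cap product \eqref{eq:Raction} with $t$ preserves the filtration, and $\gr(X(L),d_t)$ is isomorphic to $(X(\fA),d_{\bar t})$, where $\fA$ is the abelian restricted Lie super\-algebra on the same underlying super\-space as $L$ with trivial bracket and trivial restriction, and $\bar t$ is the twisting cochain with $\bar t_2(\gamma_1'(x_i)) = x_i^{p-1}\subgrp{x_i}$ and $\bar t_{2n} = 0$ for $n \geq 2$ (cf.\ Remark~\ref{remark:tnzero}).

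Since each $X_n(L)$ is finite-dimensional and $F_\bullet X(L)$ is bounded in each external degree, the associated spectral sequence converges, so by the standard filtration argument it suffices to prove that $(X(\fA),d_{\bar t})$ resolves $k$. Here $\fA$ is a direct sum of one-dimensional ideals, so $X(\fA)$ factors as a tensor product of the complexes $X(kx_i)$ over the even basis vectors and $X(ky_j)$ over the odd ones; because $\bar t$ is supported on the individual factors $\Gamma'(kx_i)$, the differential $d_{\bar t}$ is the tensor-product differential, and the K\"{u}nneth formula reduces the problem to the one-dimensional cases. For even $kx$ with trivial restriction, $V(kx) = k[x]/(x^p)$, and the basis $\subgrp{x}^{\varepsilon}\gamma_r'(x)$ identifies $X(kx)$ with the standard $2$-periodic free resolution $\cdots \xrightarrow{x} V(kx) \xrightarrow{x^{p-1}} V(kx) \xrightarrow{x} V(kx) \to k$, using $d_{\bar t}(\subgrp{x}) = x$ and $d_{\bar t}(\gamma_r'(x)) = x^{p-1}\subgrp{x}\gamma_{r-1}'(x)$. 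For odd $ky$ the even part vanishes, so there is no $\Gamma'$-factor and no twisting, and $X(ky) = Y(ky)$ is already a resolution of $k$ by Theorem~\ref{theorem:Koszulresolution} (explicitly $\cdots \xrightarrow{y} V(ky) \xrightarrow{y} V(ky) \to k$ with $V(ky) = \Lambda(y)$). In both cases the homology is $k$, so K\"{u}nneth gives $\opH(X(\fA),d_{\bar t}) = k$.

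I expect the crux to be the second paragraph: confirming that passing to the associated graded strips the twisting cochain down to its leading term $\bar t_2$ and kills $\bar t_{2n}$ for $n \geq 2$. This is precisely what the filtration estimate $\im(t_{2n}) \subset F_{np-1}Y_{2n-1}(\Lzero)$ in Lemma~\ref{lemma:twistingcochain} is engineered to supply, so the remaining work is bookkeeping the weights of the cap product \eqref{eq:Raction} and checking the signs that make $d_{\bar t}$ decompose as a tensor-product differential under the K\"{u}nneth factorization of $X(\fA)$.
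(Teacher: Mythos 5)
Your proof follows essentially the same route as the paper's: the filtration you define (monomial length on $V(L)$, external degree on $\Ybar(L)$, weight $rp$ on $\gamma_r'(x_i)$) is exactly the filtration \eqref{eq:X(L)filtration} used there, the associated graded complex is identified with $X(\lab)$ in both arguments via the bound $\im(t_{2n}) \subset F_{np-1}Y_{2n-1}(\Lzero)$ from Lemma \ref{lemma:twistingcochain} together with Remark \ref{remark:tnzero}, and both conclude by the K\"{u}nneth factorization into one-dimensional tensor factors. The only difference is cosmetic: where the paper cites \cite[\S6]{May:1966} for the exactness of $X(k_{\ol{0}})$, you verify it directly by recognizing the standard $2$-periodic resolution of $k$ over $k[x]/(x^p)$, which is a fine substitute.
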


\begin{proof}
The strategy is similar to that for the proof of Theorem \ref{theorem:Koszulresolution}. First define a filtration on $\Xbar(L)$ by $F_j\Xbar(L)  = \bigoplus_{m + np \leq j} \Ybar_m(L) \otimes \Gamma_{2n}'(\Lzero)$, and then define a filtration on $X(L)$ by
\begin{equation} \label{eq:X(L)filtration} \textstyle
F_n X(L) = \bigoplus_{i+j = n} F_i V(L) \otimes F_j \Xbar(L).
\end{equation}
Here $F_0 V(L) \subset F_1 V(L) \subset \cdots$ is the monomial-length filtration on $V(L)$ induced by the corres\-ponding filtration on $U(L)$. One can check that $d_t$ is filtration-preserving, and that the associated graded complex $\gr X(L) = \bigoplus_{n \geq 0} F_n X(L)/F_{n-1} X(L)$ is isomorphic as a complex to $X(\lab)$, where $\lab$ is the abelian restricted Lie super\-algebra with trivial restriction mapping and the same underlying super\-space as $L$. In particular, $\gr V(L) = \bigoplus_{n \geq 0} F_n V(L)/ F_{n-1} V(L)$ is naturally isomorphic as a Hopf super\-algebra to $\vlab$. The external grading on $\vlab$ arises from considering the subspace $\lab \subset \vlab$ as a graded super\-space concentrated in external degree $1$. Now one can show that $X(\lab)$ is isomorphic as a complex to the tensor product of $\dim \Lzero$ copies of $X(k_{\ol{0}})$ with $\dim \Lone$ copies of $Y(k_{\ol{1}})$, where $k_{\ol{0}}$ and $k_{\ol{1}}$ denote one-dimensional trivial restricted Lie super\-algebras concentrated in $\Z_2$-degrees $\ol{0}$ and $\ol{1}$, respectively. This reduces the proof of the theorem to showing that $X(k_{\ol{0}})$ is a resolution of $k$, which can be checked by hand; see \cite[\S6]{May:1966}.
\end{proof}

\begin{remark} \label{remark:notnatural}
Unlike $Y(L)$, the resolution $X(L)$ as constructed here is not functorial in $L$, since the definition of $d_t$ depends on the choice of a particular basis for $L$.
\end{remark}

From now on we fix a twisting cochain $t$ as constructed in the proof of Lemma \ref{lemma:twistingcochain}.

\subsection{The May spectral sequence} \label{subsection:Mayspecseq}

In this section set $V = V(L)$, and let $F_0 V \subset F_1 V \subset \cdots $ be the monomial-length filtration on $V$. Note that $F_i V = V$ if $i \geq (p-1)(\dim \Lzero) + \dim \Lone$. The filtration on $V$ induces an increasing filtration on the bar complex $B(V)$, with
\begin{equation} \label{eq:barfiltration}
F_i B_n(V) = \sum_{j_0 + j_1 + \cdots + j_n \leq i} F_{j_0} V \otimes F_{j_1} V \otimes \cdots \otimes F_{j_n} V.
\end{equation}
Similarly, $F$ induces an increasing filtration on $B(V) \otimes B(V)$. The differential and diagonal approx\-imation on $B(V)$ each preserve the filtration, and the associated graded complex $\gr B(V)$ is isomorphic to $B(\vlab)$, the bar complex for the abelian restricted Lie super\-algebra with trivial restriction mapping and the same underlying super\-space as $L$.

Let $M$ be a left $V$-super\-module. Define a decreasing filtration on $\Cbul(V,M)$ by
\begin{equation} \label{eq:cobarfiltration}
F^i C^n(V,M) = \Hom_V(B_n(V)/ (V . F_{i-1}\Bbar_n(V)) ,M).
\end{equation}
In other words, $F^i C^n(V,M)$ consists of the $V$-module homomorphisms $B_n(V) \rightarrow M$ vanishing on the $V$-submodule of $B_n(V)$ generated by $F_{i-1}\Bbar_n(V) := \Bbar_n(V) \cap F_{i-1} B_n(V)$. Then $F^0 C^n(V,M) = C^n(V,M)$ and $F^i C^n(V,M) = 0$ for $i  > n \cdot [(p-1)(\dim \Lzero) + \dim \Lone]$. In the associated graded complex $\gr \Cbul(V,M) = \bigoplus_{i \geq 0} F^i \Cbul(V,M) / F^{i+1} \Cbul(V,M)$, one has
\[
\gr_i C^n(V,M) = F^i C^n(V,M) / F^{i+1} C^n(V,M) \cong \Hom_k(F_i \Bbar_n(V)/F_{i-1} \Bbar_n(V),M).
\]
Regarding $M$ as a trivial $\vlab$-module, $\gr \Cbul(V,M)$ is isomorphic as a complex to $\Cbul(\vlab,M)$. The filtrations on $\Cbul(V,M)$ and $\Cbul(V,k)$ are multiplicative with respect to the cup product action of $\Cbul(V,k)$ on $\Cbul(V,M)$. Then passing to the associated graded complexes, the right action of $\Cbul(V,k)$ on $\Cbul(V,M)$ descends to the right action of $\Cbul(\vlab,k)$ on $\Cbul(\vlab,M)$.

By \cite[Theorem 2.6]{McCleary:2001}, there exists a spectral sequence $E_r(M) \Rightarrow \Hbul(V,M)$ with
\begin{equation} \label{eq:Mayspecseq}
E_0^{i,j}(M) = \gr_i C^{i+j}(V,M), \quad \text{and} \quad
E_1^{i,j}(M) \cong \opH^{i+j}(\gr_i \Cbul(V,M)).
\end{equation}
This spectral sequence is similar to that constructed in \cite[\S4]{May:1966}, though May's construction is based on a filtration arising from the powers of the augmentation ideal in $V$. Still, we refer to $E(M)$ and $E(k)$ as May spectral sequences. The cup product makes $E(k)$ into a spectral sequence of algebras, and $E(M)$ into a spectral sequence of (right) graded super\-modules over $E(k)$. Ignoring the bigradings, there exist global isomorphisms
\begin{equation}
\begin{split}
E_0(M) &\cong \Cbul(\vlab,M) \cong M \otimes \Cbul(\vlab,k), \quad \text{and} \\
E_1(M) &\cong \Hbul(\vlab,M) \cong M \otimes \Hbul(\vlab,k),
\end{split}
\end{equation}
which are compatible with the right action of $E(k)$ on $E(M)$. Keeping track of the bigrading, one has $E_0^{i,j}(M) \cong M \otimes E_0^{i,j}(k)$, and $E_1^{i,j}(M) \cong M \otimes E_1^{i,j}(k)$.

Recall that the super\-bialgebra structure on $V$ induces a corresponding structure on $V^*$.

\begin{lemma} \label{lemma:dualfiltration}
Write $I_\ve$ for the augmentation ideal in $V^*$. Then $I_\ve^n = (V/F_{n-1}V)^*$.
\end{lemma}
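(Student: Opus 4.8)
The plan is to recognize $(V/F_{n-1}V)^*$ as the $n$-th step of a decreasing, multiplicative filtration on $V^*$ dual to $F_\bullet V$, to identify $I_\ve$ with its first step, and then to show that this dual filtration coincides with the $I_\ve$-adic filtration.

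First I would record the easy inclusion together with the formal structure. Since the augmentation map on $V^*$ is $f \mapsto f(1)$ and $F_0 V = k \cdot 1$, one has $I_\ve = (V/F_0 V)^*$. Writing $G^n := (V/F_{n-1}V)^*$ for the space of functionals vanishing on $F_{n-1}V$, this is a decreasing filtration with $G^0 = V^*$, $G^1 = I_\ve$, and $G^n = 0$ once $F_{n-1}V = V$ (which happens for $n$ large since $V$ is finite-dimensional). Because the monomial-length filtration is a bialgebra filtration --- the elements of $L$ are primitive and lie in $F_1 V$, so $\Delta(F_k V) \subseteq \sum_{i+j=k} F_i V \otimes F_j V$ --- and because the product on $V^*$ is dual to $\Delta$, a pigeonhole argument shows $G^m \cdot G^n \subseteq G^{m+n}$: for $a \in F_{m+n-1}V$, each term $F_i V \otimes F_j V$ of $\Delta(a)$ has $i+j = m+n-1$, forcing $i \leq m-1$ or $j \leq n-1$, so one of the two factors kills it. In particular $I_\ve^n = (G^1)^n \subseteq G^n$, which is one inclusion.

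For the reverse inclusion I would pass to the associated graded algebra. Since $F_\bullet V$ is multiplicative and $G^\bullet$ is dual to it, $\gr_G V^* := \bigoplus_n G^n/G^{n+1}$ is a graded algebra, and the identification $G^n/G^{n+1} \cong (F_n V/F_{n-1}V)^* = (\gr_n V)^*$ exhibits $\gr_G V^*$ as the graded dual $(\gr V)^* = \vlab^*$, with product dual to the coproduct of $\gr V \cong \vlab$. The crucial step is then to show that $\vlab^*$ is \emph{generated in external degree $1$}. This is exactly where the restricted structure is essential: as an algebra $\vlab^*$ is isomorphic to the tensor product of a $p$-truncated divided power algebra on $\Lzero^*$ with an exterior algebra on $\Lone^*$, the truncation arising because $\vlab$ is $p$-truncated on $\Lzero$. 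Every surviving divided power $\gamma_r$ with $r < p$ satisfies $\gamma_1^r = r!\,\gamma_r$ with $r! \neq 0$, hence is a product of degree-one elements, and the odd exterior factor is visibly generated in degree one. Consequently the degree-$n$ component of $\gr_G V^*$ is the $n$-fold product of its degree-one component, so the image of $I_\ve^n = (G^1)^n$ in $G^n/G^{n+1}$ is all of $G^n/G^{n+1}$; equivalently $I_\ve^n + G^{n+1} = G^n$.

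Finally I would upgrade this to equality by a descending filtration argument: iterating $G^{n+1} = I_\ve^{n+1} + G^{n+2}$ and using $I_\ve^{n+1} \subseteq I_\ve^n$ gives $G^n = I_\ve^n + G^{n+k}$ for every $k$, and $G^{n+k} = 0$ for $k$ large, whence $I_\ve^n = G^n = (V/F_{n-1}V)^*$. The main obstacle is the degree-one generation of $\vlab^*$: divided power algebras are \emph{not} generated in degree one in characteristic $p$, and it is precisely the $p$-truncation coming from the restricted enveloping structure (which kills all $\gamma_r$ with $r \geq p$) that makes the statement hold. I would also take care to verify that the filtration product on $\gr_G V^*$ genuinely coincides with the graded-dual product on $(\gr V)^*$, a routine but sign-sensitive compatibility.
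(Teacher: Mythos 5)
Your proof is correct, but it takes a genuinely different route from the paper's. The paper's proof is essentially a citation: it observes that $V$ is a connected Hopf superalgebra whose monomial-length filtration coincides with its coradical filtration (cf.\ \cite[\S 5.5]{Montgomery:1993}), and then the equality $I_\ve^n = (V/F_{n-1}V)^*$ follows from the standard duality between the coradical filtration of a coalgebra and the powers of the augmentation ideal of its dual algebra, as in \cite[5.2.9]{Montgomery:1993}. You instead argue directly: the inclusion $I_\ve^n \subseteq (V/F_{n-1}V)^*$ via the coalgebra-filtration property of monomial length (which your pigeonhole argument establishes correctly, since $L$ consists of primitives); the reverse inclusion by identifying the associated graded of the dual filtration with the graded dual of $\gr V \cong \vlab$ and proving that $\vlab^*$ is generated in external degree one --- where the $p$-truncation supplied by the restricted structure is exactly what rescues degree-one generation, since for $U(L)$ one would get a full divided power algebra and the claim would fail; and finally a descending induction using that $F_i V = V$ for $i$ large. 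Your route buys self-containedness, avoids importing coradical-filtration theory (which Montgomery states for ordinary rather than super Hopf algebras), and isolates precisely where restrictedness and finite-dimensionality enter; the paper's route buys brevity and sidesteps the associated-graded compatibility check (that the filtration product on $\bigoplus_n G^n/G^{n+1}$ agrees with the graded-dual product on $\vlab^*$) that your argument must verify. Note also that the identification $\gr V^* \cong (\gr V)^* \cong \vlab^*$, which you establish as an intermediate step, is exactly the consequence the paper draws from the lemma immediately after its proof, so your argument delivers that corollary for free.
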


\begin{proof}
The restricted enveloping super\-algebra $V$ is a connected Hopf super\-algebra, and the monom\-ial-length filtration on $V$ is the same as the coradical filtration on $V$; cf.\ \cite[\S 5.5]{Montgomery:1993}. Then the claim follows as in \cite[5.2.9]{Montgomery:1993}.
\end{proof}

Recall that $\gr V = \bigoplus_{n \geq 0} F_n V / F_{n-1}V$ is naturally isomorphic as a graded super\-algebra to $\vlab$, where the external grading on $\vlab$ is obtained by considering the subspace $\lab$ as concentrated in external degree $1$. The algebra $V^*$ is filtered by the powers of its augmentation ideal, and the associated graded algebra is defined by $\gr V^* = \bigoplus_{n \geq 0} I_\ve^n / I_\ve^{n+1}$. Applying the lemma, it follows that $\gr V^*$ and $(\gr V)^* \cong \vlab^*$ are naturally isomorphic as Hopf super\-algebras. Identifying $C^n(V,M)$ with $M \otimes C^n(V,k) \cong M \otimes (V^*)^{\otimes n}$, one has
\begin{equation} \label{eq:Vcobarfiltration}
F^i C^n(V,M) = \sum_{i_1 + \cdots + i_n \geq i} M \otimes I_\ve^{i_1} \otimes \cdots \otimes I_\ve^{i_n}.
\end{equation}
Then $\gr C^n(V,M) \cong M \otimes (\gr V^*)^{\otimes n}$, with
\[
\gr_i C^n(V,M) \cong \sum_{i_1 + \cdots + i_n = i} M \otimes (\gr_{i_1} V^*) \otimes \cdots \otimes (\gr_{i_n} V^*).
\]

\begin{proposition} \label{proposition:E1page}
Suppose $k$ is a perfect field of characteristic $p > 2$. Then there exists a natural isomorphism of graded super\-algebras $\Hbul(\vlab,k) \cong \Lambda_s(L^*) \otimes S'({\Lzero}^*)$. Here $S'({\Lzero}^*)$ is the poly\-nomial algebra on ${\Lzero}^*$ with ${\Lzero}^*$ concentrated in external degree $2$. In the spectral sequence \eqref{eq:Mayspecseq}, one has for each $i,j \in \Z$ natural identifications
\begin{equation} \label{eq:E1ij}
E_1^{i,j}(k) = \bigoplus \Lambda_s^a(L^*) \otimes S^b({\Lzero}^*),
\end{equation}
where the sum is taken over all $a,b \in \N$ such that $a+pb = i$ and $a+2b = i+j$.
\end{proposition}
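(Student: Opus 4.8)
The plan is to compute $\Hbul(\vlab,k)$ directly and then read off the bigrading from the filtration underlying \eqref{eq:Mayspecseq}. Since $\lab$ is abelian with trivial restriction mapping, its restricted enveloping superalgebra factors as a graded tensor product of Hopf superalgebras,
\[
\vlab \cong \textstyle\bigotimes_{i=1}^s k[x_i]/(x_i^p) \otimes \bigotimes_{j=1}^t \Lambda(y_j),
\]
with each $x_i$ even and each $y_j$ odd. First I would invoke the K\"unneth theorem for cohomology of graded tensor products of augmented superalgebras over the field $k$, which is compatible with cup products and with both the cohomological and the internal (length) gradings, to reduce the computation to the two one-dimensional cases $V(k_{\ol{0}}) = k[x]/(x^p)$ and $V(k_{\ol{1}}) = \Lambda(y)$.

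For the odd factor, Remark \ref{remark:cohomologyabelianL} already gives $\Hbul(\Lambda(y),k) \cong S(y^*)$, a polynomial algebra on a single generator $y^*$ of bidegree $(1,\ol{1})$ sitting in internal degree $1$. For the even factor I would use the standard minimal periodic resolution of $k$ over $k[x]/(x^p)$, whose differentials alternate between multiplication by $x$ and by $x^{p-1}$; applying $\Hom_{k[x]/(x^p)}(-,k)$ yields a complex with zero differential, so $\Hbul(k[x]/(x^p),k)$ is one-dimensional in each cohomological degree. The hypothesis $p > 2$ forces the degree-one generator $x^*$ (bidegree $(1,\ol{0})$) to square to zero by graded-commutativity, so that $\Hbul(k[x]/(x^p),k) \cong \Lambda(x^*) \otimes k[\mu]$ with $\mu$ of bidegree $(2,\ol{0})$. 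Reading off internal degrees from the resolution, $x^*$ lies in internal degree $1$ while $\mu$---which detects the relation $x^p = 0$---lies in internal degree $p$. Assembling the factors gives a graded-superalgebra isomorphism $\Hbul(\vlab,k) \cong \Lambda(\Lzero^*) \otimes S(\Lone^*) \otimes S'(\Lzero^*) = \Lambda_s(L^*) \otimes S'(\Lzero^*)$, with the generators of $\Lambda_s(L^*)$ in internal degree $1$ and those of $S'(\Lzero^*)$ in internal degree $p$.

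To obtain the bigraded identification \eqref{eq:E1ij}, recall from the discussion surrounding \eqref{eq:Vcobarfiltration} that the filtration degree $i$ in $E_1^{i,j}(k) = \opH^{i+j}(\gr_i \Cbul(V,k))$ coincides with the internal (length) grading transported through $\gr V^* \cong \vlab^*$; thus the internal degrees computed above are precisely the filtration degrees. A standard monomial that is a product of $a$ generators of $\Lambda_s(L^*)$ and $b$ generators of $S'(\Lzero^*)$ then has cohomological degree $a + 2b = i + j$ and filtration degree $a + pb = i$, which is exactly the indexing condition in \eqref{eq:E1ij}; collecting these monomials yields $E_1^{i,j}(k) = \bigoplus \Lambda_s^a(L^*) \otimes S^b(\Lzero^*)$, summed over $a + pb = i$ and $a + 2b = i+j$.

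The main obstacle is naturality together with the correct placement of the degree-two generators. The K\"unneth and odd-case isomorphisms are manifestly natural, and $\opH^1(\vlab,k) \cong (I/I^2)^* \cong L^*$ naturally; the delicate point is that the degree-two classes $\mu_i$ a priori span a Frobenius twist of $\Lzero^*$, since $\mu$ detects the $p$-semilinear relation $x^p = 0$. It is precisely here that the perfectness of $k$ is used: the bijectivity of the Frobenius lets one untwist and identify this span naturally with $\Lzero^*$, so that the whole isomorphism $\Hbul(\vlab,k) \cong \Lambda_s(L^*) \otimes S'(\Lzero^*)$ is natural in $L$. One must also take care that the filtration conventions place $\mu_i$ in degree $p$ rather than in some neighbouring degree; this can be cross-checked against the explicit resolution $X(\lab)$ of Section \ref{subsection:Koszulresolutionrls}, where $\gamma_1'(x_i) \in \Gamma_2'(\Lzero)$ contributes $np = p$ to the filtration \eqref{eq:X(L)filtration}.
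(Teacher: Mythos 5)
Your proposal is correct, and it arrives at the same two conclusions (the algebra computation and the bigraded placement of the generators) by a genuinely different mechanism than the paper. The paper splits $\vlab$ only as $V(\Lzero) \otimes \Lambda(\Lone)$, quotes Jantzen [I.4.27] for $\Hbul(V(\Lzero),k)$ and Bergh--Oppermann for the twisted K\"unneth isomorphism, and then---this is its main technical content---exhibits \emph{explicit cocycle representatives in the filtered cobar complex}: the subspace $\gr_1 V^* \cong L^*$ of $C^1(V,k)$ for the generators of $\Lambda_s(L^*)$, and the classes $\beta(f) = \sum_{i=1}^{p-1} \cbinom{p}{i} f^i \otimes f^{p-i}$, which visibly lie in $\sum_{i=1}^{p-1} (\gr_i V^*) \otimes (\gr_{p-i} V^*) \subset \gr_p C^2(V,k)$, for the generators of $S'({\Lzero}^*)$; the bidegrees $(1,0)$ and $(p,2-p)$ are then read off directly. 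You instead decompose into rank-one factors, recompute the even rank-one case from the minimal periodic resolution, and transport internal degrees through the K\"unneth map. This route is sound, but two points need tightening. First, ``$\mu$ detects the relation $x^p=0$'' is only a heuristic; what makes your degree claim rigorous is that the minimal resolution of $k$ over $k[x]/(x^p)$ becomes a resolution by \emph{graded} modules once $x$ is placed in internal degree $1$ (its degree-two generator then sits in internal degree $1+(p-1)=p$), that graded Ext is independent of the choice of graded resolution, and that the filtration index on $E_1$ is exactly this internal grading, as the identification $\gr_i C^n(V,k) \cong \sum_{i_1+\cdots+i_n=i} \gr_{i_1}V^* \otimes \cdots \otimes \gr_{i_n}V^*$ shows; you should say this (or, equivalently, write down the comparison map from the bar resolution, which reproduces the paper's $\beta$-cocycles). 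Your cross-check against $X(\lab)$ is legitimate and non-circular---the isomorphism $E_r^{i,j}(k) \cong D_r^{i,j}(k)$ for $r \geq 1$ in Section \ref{subsection:comparison} uses only that $\gr(\mu)$ lifts the identity, not the present proposition---but it imports machinery the paper develops afterward. Second, your unnamed K\"unneth theorem ``compatible with cup products and internal gradings'' for super tensor products is not the naive K\"unneth theorem (the tensor product is twisted); it is precisely what the paper cites Bergh--Oppermann for, and your argument needs the same citation. Finally, note a trade-off: the paper's explicit representatives ($\beta$, and the cochains $f_i$ of Remark \ref{remark:barcomplexbasis}) are reused later for the permanent-cycle and $G$-equivariance arguments of Sections \ref{subsection:comparison} and \ref{subsection:MayspecseqG1}, so your more abstract route, while adequate for this proposition, would not supply those later inputs.
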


\begin{proof}
Proving the proposition amounts to computing the cohomology of the complex $E_0(k) \cong \Cbul(\vlab,k)$, and in the process keeping track of the extra external grading that arises when the subspace $\lab \subset \vlab$ is considered as a graded super\-space concentrated in external degree $1$. Then without loss of generality we may assume that $L$ is abelian with trivial restriction map.

For the time being forget the extra grading on $V(L)$, that is, consider $V(L)$ as a graded super\-algebra concentrated in external degree $0$. Then there exists a natural isomorphism of Hopf super\-algebras $V \cong V(\Lzero) \otimes U(\Lone) = V(\Lzero) \otimes \Lambda(\Lone)$. We have $\Hbul(U(\Lone),k) \cong S({\Lone}^*)$ by Remark \ref{remark:cohomologyabelianL}, and $\Hbul(V(\Lzero),k)$ is naturally isomorphic to $\Lambda({\Lzero}^*) \otimes S'({\Lzero}^*)$ by \cite[I.4.27]{Jantzen:2003} (specifically, by the case $r=1$ of part (b) of the proposition, which assumes that $k$ is perfect). Then
\[
\Hbul(V,k) \cong \Lambda({\Lzero}^*) \otimes S({\Lone}^*) \otimes S'({\Lzero}^*) = \Lambda_s(L^*) \otimes S'({\Lzero}^*)
\]
as a graded-commutative graded super\-algebra by \cite[Theorem 3.7]{Bergh:2008}.

To identify the summand $E_1^{i,j}(k)$ of $E_1(k) \cong \Hbul(V,k)$, we identify cochain representatives in $E_0(k) \cong \Cbul(V,k)$ for the generators of $\Hbul(V,k)$, and then determine the extra external degrees of those representatives. The chain complexes $B(V) \cong B(V(\Lzero) \otimes \Lambda(\Lone))$ and $B(V(\Lzero)) \otimes B(\Lambda(\Lone))$ are chain equivalent via the chain homomorphism $\psi: B(V(\Lzero) \otimes \Lambda(\Lone)) \rightarrow B(V(\Lzero)) \otimes B(\Lambda(\Lone))$ defined in \cite[p.\ 221]{Cartan:1999}. Then \cite[Theorem 3.7]{Bergh:2008} expresses the fact that the chain map
\begin{multline} \label{eq:psistar}
\Cbul(V(\Lzero),k) \otimes \Cbul(\Lambda(\Lone),k) \cong \Hom_{V(\Lzero) \otimes \Lambda(\Lone)}(B_\bullet(V(\Lzero)) \otimes B_\bullet(\Lambda(\Lone)),k) \\
\stackrel{\psi^*}{\longrightarrow} \Hom_V(B(V),k) = \Cbul(V,k)
\end{multline}
induced by $\psi$ produces an isomorphism in cohomology, and that this isomorphism is compatible with the cup product in cohomology. Then to determine the desired cochain representatives, it suffices to find representatives in $\Cbul(V(\Lzero),k)$ and $\Cbul(\Lambda(\Lone),k)$ for the generators of the cohomology rings $\Hbul(V(\Lzero),k)$ and $\Hbul(\Lambda(\Lone),k)$, and then to find the images of the representatives under \eqref{eq:psistar}.

We have $C^1(\Lambda(\Lone),k) \cong \Lambda^1(\Lone)^* \cong {\Lone}^*$, and then it is well-known that this space consists of cocycles whose images in $\Hbul(\Lambda(\Lone),k)$ span $\opH^1(\Lambda(\Lone),k) \cong S^1({\Lone}^*) \cong {\Lone}^*$, and hence generate $\Hbul(\Lambda(\Lone),k)$; see \cite[Example 2.2(2)]{Priddy:1970}. To identify cochain representatives in $\Cbul(V(\Lzero),k)$, we suitably interpret \cite[I.4.20--I.4.27]{Jantzen:2003}. In this context, $\Hbul(V(\Lzero),k)$ is the cohom\-ology ring for the first Frobenius kernel of the affine group scheme $(\Lzero)_a$. Then $k[(\Lzero)_a] = S({\Lzero}^*)$, and $V(\Lzero)^* \cong k[(\Lzero)_{a,1}] \cong S({\Lzero}^*)/\subgrp{f^p: f \in {\Lzero}^*}$. Moreover, under this identification, the extra external grading on $V(\Lzero)^*$ is induced by the polynomial-degree grading on $S({\Lzero}^*)$. Now the discussion of \cite[I.4.21--I.4.27]{Jantzen:2003} shows that the subspace ${\Lzero}^* \cong \gr_1 V(\Lzero)^*$ of $C^1(V(\Lzero),k) \cong V(\Lzero)^*$ consists of cocycles whose images in $\Hbul(V(\Lzero),k)$ generate the sub\-algebra $\Lambda({\Lzero}^*)$ of $\Hbul(V(\Lzero),k)$. Next, given $f \in V(\Lzero)^* \cong C^1(V(\Lzero),k)$, set
\begin{equation} \label{eq:beta} \textstyle
\beta(f) = \sum_{i=1}^{p-1} \cbinom{p}{i} f^i \otimes f^{p-i} \in V(\Lzero)^* \otimes V(\Lzero)^* \cong C^2(V(\Lzero),k).
\end{equation}
Here $\cbinom{p}{i} \in \Z$ is defined for $0 < i < p$ by $\cbinom{p}{i} = \binom{p}{i}/p$. Then $\beta$ takes cocycles to cocycles, and the induced function $\betabar : \opH^1(V(\Lzero),k) \rightarrow \opH^2(V(\Lzero),k)$ is a semilinear map that takes the generating space ${\Lzero}^*$ for $\Lambda({\Lzero}^*) \subset \Hbul(V(\Lzero),k)$ to the generating space for $S'({\Lzero}^*) \subset \Hbul(V(\Lzero),k)$.

Combining the observations of the previous paragraph, we conclude that the subspace $\gr_1 V^* \cong L^*$ of $C^1(V,k) \cong V^*$ consists of cocycles whose images in $\Hbul(V,k)$ generate the subalgebra $\Lambda_s(L^*) = \Lambda({\Lzero}^*) \otimes S({\Lone}^*)$ of $\Hbul(V,k)$, and that the subspace $\betabar((\gr_1 V^*)_{\ol{0}}) \subset \sum_{i=1}^{p-1} (\gr_i V^*) \otimes (\gr_{p-i} V^*) \subset \gr_p C^2(V,k)$ consists of cocycles whose images in $\Hbul(V,k)$ generate $S'({\Lzero}^*)$. Then it follows that $E_1^{1,0}(k) \cong \Lambda_s^1(L^*)$ and $E_1^{p,2-p}(k) \cong {\Lzero}^* \subset S'({\Lzero}^*)$, and hence that $E_1^{i,j}(k)$ is as given in \eqref{eq:E1ij}.
\end{proof}

\begin{remark} \label{remark:barcomplexbasis}
As in Section \ref{subsection:Koszulresolutionrls}, let $\mathcal{P} \subset V$ be the set of PBW monomial basis vectors in $V$ arising from the fixed ordered basis $\mathcal{B} = \set{x_1,\ldots,x_s,y_1,\ldots,y_t}$ of $L$. Now $B_n(V)$ admits a basis consisting of all $z_0[z_1|\cdots|z_n]$ with $z_j \in \mathcal{P}$. We call this set the standard monomial basis for $B_n(V)$ with respect to the given ordered basis $\mathcal{B}$ for $L$, and we call the subset of vectors with $z_0 = 1$ the standard monomial basis for $\Bbar_n(V)$. Given $z \in B_n(V)$, we say that $z_0[z_1|\cdots|z_n]$ appears in $z$ if it appears with a nonzero coefficient when $z$ is expressed as a linear combination of standard monomial basis vectors. Given $1 \leq j \leq t$ and $n \in \N$, set $[y_j]^n = [y_j|\cdots|y_j] \in \Bbar_n(V)$.

The proof of Proposition \ref{proposition:E1page} shows that if $f \in (\gr_1 V^*)_{\ol{1}} \cong {\Lone}^*$, then the cochain $f \otimes \cdots \otimes f \in C^p(V,k)$ descends in the $E_1$-page of the spectral sequence $E_r(k) \Rightarrow \Hbul(V,k)$ to a cochain representative for an element of the subalgebra $S({\Lone}^*)^p$ of $\Hbul(\vlab,k)$. In particular, given $1 \leq i \leq t$, define $f_i : B_p(V) \rightarrow k$ so that $f_i([y_i|\cdots|y_i]) = 1$, and so that $f_i$ vanishes on all other standard basis monomials for $\Bbar_p(V)$. Then the $f_i$ for $1 \leq i \leq t$ descend in $E_1(k)$ to representatives for a set of generators of $S({\Lone}^*)^p$.
\end{remark}

\begin{remark}
Replacing $V(\Lzero)$ by an arbitrary finite-dimensional cocommutative superbialgebra $A$ over $k$, the function $\beta$ defined in \eqref{eq:beta} can be generalized to a function $\beta: C^1(A,k) \rightarrow C^2(A,k)$. One can check that $\beta$ maps cocycles in $\ker(\partial^1)_{\ol{0}} = \opH^1(A,k)_{\ol{0}}$ to cocycles in $C^2(A,k)_{\ol{0}}$, and hence that $\beta$ induces a semilinear function $\betabar: \opH^1(A,k)_{\ol{0}} \rightarrow \opH^2(A,k)_{\ol{0}}$. Given $f_1,f_2 \in \ker(\partial^1)_{\ol{0}}$, one has
\begin{equation} \label{eq:betaadditive} \textstyle
\beta(f_1+f_2) = \beta(f_1)+\beta(f_2) - \partial^1(\sum_{i=1}^{p-1} \cbinom{p}{i} f_1^i f_2^{p-i}),
\end{equation}
so that $\betabar$ is additive, though in general this formula fails if $\ol{f_1} \neq \ol{f_2}$.
\end{remark}

\subsection{Comparison of spectral sequences} \label{subsection:comparison}

In this section, assume that $k$ is perfect. Let $S({\Lone}^*)^p$ be the subalgebra of $S({\Lone}^*)$ generated by all $p$-th powers in $S({\Lone}^*)$. Then $E_1(k) \cong \Hbul(V(L_{ab}),k)$ and $E_1(M) \cong M \otimes \Hbul(V(L_{ab}),k)$ are each finitely-generated graded right super\-modules over the commutative subalgebra $S({\Lone}^*)^p \otimes S'({\Lzero}^*)$ of $E_1(k)$. Our goal is to show that $S({\Lone}^*)^p \otimes S'({\Lzero}^*)$ is generated by permanent cycles in $E_1(k)$. This will imply by \cite[Lemma 2.5]{Mastnak:2010} and \cite[III.2.9, Corollary 1]{Bourbaki:1998} that $\Hbul(V(L),k)$ is a finitely-generated graded super\-algebra, and that $\Hbul(V(L),M)$ is a finitely-generated graded super\-module over $\Hbul(V(L),k)$. To accomplish this goal, we compare the spectral sequence $E_r(k) \Rightarrow \Hbul(V(L),k)$ to a new spectral sequence $D_r(k) \Rightarrow \Hbul(V(L),k)$ constructed from a filtration on the complex $X(L)$. Specifically, we exhibit a morphism of spectral sequences $E_r(k) \rightarrow D_r(k)$ that induces isomorphisms $E_r^{i,j}(k) \cong D_r^{i,j}(k)$ for all $r \geq 1$. We then show that the generators for $S({\Lone}^*)^p \otimes S'({\Lzero}^*)$ in $E_1(k)$ map onto permanent cycles in $D_1(k)$.

We continue to write $V = V(L)$. Let $N(V)$ be the normalized left bar complex for $V$. Then $N_n(V) = V \otimes I(V)^{\otimes n}$, where $I(V)$ is the cokernel of the unit map $k \rightarrow V$. Set $\ol{N}_n(V) = 1 \otimes I(V)^{\otimes n}$. The differential $d$ and contracting homotopy $s$ on $N(V)$ are defined by the same formulas as on $B(V)$. In particular, the projection map $B(V) \rightarrow N(V)$ is a chain equivalence, with quasi-inverse $\psi: N(V) \rightarrow B(V)$ induced by the vector space map $I(V) \rightarrow V : v + k \mapsto v - \ve(v)$. The filtration on $B(V)$ induces a corresponding filtration on $N(V)$ that is compatible with the differential and contracting homotopy on $N(V)$ and with the chain homomorphism $\psi: N(V) \rightarrow B(V)$.

Let $F_0 X(L) \subset F_1 X(L) \subset \cdots$ be the filtration on $X(L)$ defined in the proof of Theorem \ref{theorem:X(L)resolution}. By \cite[Proposition 13]{May:1966}, the identity map $k \rightarrow k$ lifts to a chain homomorphism $\mu': X(L) \rightarrow N(V)$ such that $\mu'(\ol{X}_n(L)) \subset \Nbar_n(V)$, and such that the restriction of $\mu'$ to $\Xbar_n(L)$ is determined recursively through the formula $\mu_n' = s \circ \mu_{n-1}' \circ d_t$. Set $\mu = \psi \circ \mu'$. Since $\mu_0$ is the identity map $V \rightarrow V$, it follows by induction on $n$ that $\mu$ is compatible with the filtrations on the two complexes, and hence that $\mu$ induces a chain map $\gr(\mu) : \gr X(L) \rightarrow \gr B(V)$. Identifying $\gr X(L) = X(\lab)$ and $\gr B(V) = B(\vlab)$, $\gr(\mu)$ is a homomorphism of $\vlab$-complexes lifting the identity $k \rightarrow k$.

Applying \cite[Proposition 13]{May:1966} to the Koszul complex $Y(L)$, one obtains a chain homomorphism $\nu': Y(L) \rightarrow N(U(L))$ satisfying $\nu'(\Ybar(L)) \subset \Nbar(U(L))$, and such that the restriction of $\nu'$ to $\Ybar_n(L)$ is determined recursively through the formula $\nu_n' = s \circ \nu_{n-1}' \circ d$. The projection homomorphism $U(L) \rightarrow V(L)$ induces a chain homomorphism $N(U(L)) \rightarrow N(V)$, and then the composite map $Y(L) \rightarrow N(U(L)) \rightarrow N(V)$ factors through a chain homomorphism $W(L) \rightarrow N(V)$, which by abuse of notation we also denote by $\nu'$. Then $\nu': W(L) \rightarrow N(V)$ also satisfies the recursive formula $\nu_n' = s \circ \nu_{n-1}' \circ d$. Set $\nu = \psi \circ \nu': W(L) \rightarrow B(V)$.

\begin{lemma} \label{lemma:mu}
For each $1 \leq i \leq s$, $1 \leq j \leq t$, and $n \in \N$, one has
\[
\mu(\subgrp{x_i}) = [x_i] \quad \text{and} \quad \mu(\gamma_n(y_j)) = [y_j]^n.
\]
Given a standard monomial basis vector $w \in \Xbar_n(L)$, $[y_j]^n$ appears in $\mu(w)$ only if $w = \gamma_n(y_j)$.
\end{lemma}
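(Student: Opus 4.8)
The plan is to work throughout with the lift $\mu' : X(L) \to N(V)$ into the \emph{normalized} bar complex, recalling that $\mu = \psi \circ \mu'$ and that $\psi$ carries a normalized monomial all of whose bar entries are nonidentity PBW basis vectors to the identical monomial in $B(V)$. In particular $[x_i]$ and $[y_j]^n$ are fixed by $\psi$, so it suffices to track their appearance in $\mu'(w)$. The engine of every computation is the recursion $\mu_m' = s \circ \mu_{m-1}' \circ d_t$ together with one elementary observation about the contracting homotopy: in the normalized complex $s$ annihilates any element whose leading coefficient lies in $k \cdot 1$, while $s(v_0[v_1|\cdots|v_m]) = [v_0|v_1|\cdots|v_m]$ otherwise. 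Consequently, for $m \geq 1$ every term of $\mu_m'(z)$ with $z \in \Xbar_m(L)$ has leading coefficient $1$, since $\mu_m'$ ends in an application of $s$. The first formula is then immediate: the $\Gamma'$-factor of $\subgrp{x_i}$ is trivial, so $\subgrp{x_i} \cap t = 0$ and $d_t(\subgrp{x_i}) = d(\subgrp{x_i}) = x_i$, whence $\mu'(\subgrp{x_i}) = s(x_i) = [x_i]$.

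For $\mu(\gamma_n(y_j)) = [y_j]^n$ I would induct on $n$, the cases $n \leq 1$ being direct. For the inductive step the $\Gamma'$-factor of $\gamma_n(y_j)$ is again trivial, so $d_t(\gamma_n(y_j)) = d(\gamma_n(y_j)) = y_j\gamma_{n-1}(y_j) - \tfrac12 \subgrp{[y_j,y_j]}\gamma_{n-2}(y_j)$. Applying $\mu_{n-1}'$ ($V$-linearly) and the inductive hypothesis, the first summand contributes $y_j \cdot [y_j]^{n-1}$, and $s(y_j[y_j]^{n-1}) = [y_j]^n$. The crucial point is that, as $n - 1 \geq 1$, the correction term $\mu_{n-1}'(\subgrp{[y_j,y_j]}\gamma_{n-2}(y_j))$ has every term with leading coefficient $1$ by the observation above; hence $s$ kills it, and $\mu_n'(\gamma_n(y_j)) = [y_j]^n$ exactly.

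For the final ``appears only if'' assertion I would again induct on $n$, first reducing it to a statement about $d_t$. Writing $d_t(w) = \sum_\alpha v_\alpha \otimes z_\alpha$ in standard monomials and using that each $\mu_{n-1}'(z_\alpha)$ has unit leading coefficient, one sees that $[y_j]^n$ appears in $\mu_n'(w) = s(\mu_{n-1}'(d_t w))$ precisely when $y_j \cdot [y_j]^{n-1}$ appears in $\mu_{n-1}'(d_t w)$; by the inductive hypothesis on $n-1$ together with $\mu_{n-1}'(\gamma_{n-1}(y_j)) = [y_j]^{n-1}$, this occurs exactly when the monomial $y_j\gamma_{n-1}(y_j)$ appears in $d_t(w)$ with nonzero coefficient. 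Thus it remains to prove that $y_j\gamma_{n-1}(y_j)$ appears in $d_t(w)$ if and only if $w = \gamma_n(y_j)$.

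To finish I would introduce the auxiliary grading $q$ on $X(L)$ counting the total number of odd ``$y$''-symbols (the $y$-exponent of the $V$-coefficient plus $\sum_\ell a_\ell$ from the $\gamma_{a_\ell}(y_\ell)$-factors), under which the target $y_j\gamma_{n-1}(y_j)$ has $q$-degree $n$, while a standard monomial $w \in \Xbar_n(L)$ satisfies $q(w) = \sum_\ell a_\ell \leq n$, with equality forcing $w$ to have no exterior and no $\Gamma'$ factors. The Koszul differential does not raise $q$ (only the $[y,y']$- and $[y,y]$-terms drop it, by $2$), so a $q$-degree-$n$ term of $d(w)$ forces $q(w) = n$, i.e.\ $w = \gamma_{a_1}(y_1)\cdots\gamma_{a_t}(y_t)$ with $\sum a_\ell = n$; inspecting the explicit differential then shows $y_j\gamma_{n-1}(y_j)$ arises only from the single-variable monomial $w = \gamma_n(y_j)$. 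The main obstacle is the twisting-cochain term $w \cap t$: here I would use that $t$ takes values in $Y(\Lzero)$, which involves only even generators and is therefore of $q$-degree $0$, so $w \cap t$ preserves $q$; since $w \cap t = 0$ unless $w$ carries a $\Gamma'$-factor, and any such $w$ has $q(w) < n$, the cap-$t$ contribution can never produce the $q$-degree-$n$ target. The one genuinely fiddly check is verifying this $q$-invariance of the cap product through the smash-product right action of $Y(\Lzero)$ on $W(L)$, using $\gamma_r(y).x = \gamma_{r-1}(y)\gamma_1([y,x])$ and $\subgrp{x'}.x = \subgrp{[x',x]}$ to see that right multiplication by even elements conserves the $y$-count.
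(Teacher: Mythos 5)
Your proof is correct. The skeleton agrees with the paper's: both establish the explicit formulas from the recursion $\mu_m' = s \circ \mu_{m-1}' \circ d_t$, both run an induction on $n$ that converts the appearance of $[y_j]^n$ in $\mu(w)$ into the appearance of the standard monomial $y_j\gamma_{n-1}(y_j)$ in $d_t(w)$, and both dispose of monomials carrying a nontrivial $\Gamma'$-factor by the observation that right multiplication by $W(\Lzero)$ cannot raise the number of odd $\gamma$-factors --- your $q$-grading is exactly the paper's informal count $\sum_i q_i \leq n-2$, made into a grading. Where you genuinely diverge is the remaining case $w \in \Ybar(L)$ (no $\Gamma'$-factor): there the paper abandons the induction, notes that $\mu$ restricts on $W(L)$ to the chain map $\nu$, and invokes May's shuffle-product description of $\nu$ (arguing as in \cite[\S 7]{May:1966}), which says that the bar entries of any monomial appearing in $\nu(u)$ are precisely the generators occurring in $u$, counted with multiplicity; this immediately forces $u = \gamma_n(y_j)$. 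You instead stay inside the induction and apply the $q$-grading to the Koszul differential itself: since $d$ does not raise $q$ and the target $y_j\gamma_{n-1}(y_j)$ has the maximal $q$-degree $n$, the source must be a pure product of odd divided powers, and inspection of the explicit formula for $d$ finishes. Your route is more self-contained --- it avoids the shuffle formula, which the paper itself only sketches by reference --- at the modest cost of making every case of the second statement depend on the inductive hypothesis; the paper's appeal to the shuffle description buys a standalone, non-inductive statement about $\nu$ on all of $W(L)$. Both the reduction step (no cancellation under $s$, since distinct standard monomials with nonidentity leading PBW coefficient map to distinct normalized monomials) and the conservation of the $y$-count under the cap-$t$ terms, which you flag as the fiddly points, do check out as you describe.
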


\begin{proof}
The formulas $\mu(\subgrp{x_i}) = [x_i]$ and $\mu(\gamma_n(y_j)) = [y_j]^n$ can be checked via the recursion $\mu_n' = s \circ \mu_{n-1}' \circ d_t$, the latter via induction on $n$. In particular, these two formulas establish the $n = 1$ case of the second statement of the lemma. For the general case we argue by induction on $n$.

Suppose $\deg(w) = n > 1$ and that $[y_j]^n$ appears in $\mu(w)$. Write $w = u z$ with
\begin{equation} \label{eq:uzfactorization}
u = \subgrp{x_{i_1} \cdots x_{i_b}} \gamma_{a_1}(y_1) \cdots \gamma_{a_t}(y_t) \quad \text{and} \quad z = \gamma_{c_1}'(x_1) \cdots \gamma_{c_s}'(x_s).
\end{equation}
Set $c = \sum_{i = 1}^s c_i$. First suppose that $c \geq 1$, so that $\deg(z) = 2c \geq 2$. Then $\deg(u) = n - 2c \leq n-2$, so in particular $\sum_{i=1}^t a_i \leq n-2$. Since $[y_j]^n$ appears in $\mu(w)$, it follows via the recursion formula that $y_j[y_j]^{n-1}$ must appear in $\mu \circ d_t(w)$, and hence by the induction hypothesis that the standard basis monomial $y_j \gamma_{n-1}(y_j) \in X_{n-1}(L)$ must appear in $d_t(w)$. From the definition of $d_t$, it follows that $y_j\gamma_{n-1}(y_j)$ can appear in $d_t(w)$ only if it appears in $u.t_{2c}(z)$. Here $t_{2c}(z) \in Y_{2c-1}(\Lzero)$. Considering $u$ as an element of the algebra $W(L)$, one can check that when $u$ is multiplied on the right by an element of the subalgebra $W(\Lzero)$, and when the product is written as a $V$-linear combination of the standard basis monomials $\subgrp{x_{\ell_1} \cdots x_{\ell_e}} \gamma_{q_1}(y_1) \cdots \gamma_{q_t}(y_t)$, the resulting monomials must also satisfy the condition $\sum_{i=1}^t q_i \leq n-2$. Then it follows that $[y_j]^n$ cannot appear in $\mu(w)$ if $c \geq 1$.

Now suppose that $c = 0$, so that $w = u$. In this case we do not require the induction hypothesis. Since $d_t$ restricts on the subspace $W(L) \subset X(L)$ to the Koszul differential $d: W(L) \rightarrow W(L)$, it follows via the recursion formula that $\mu$ restricts on $W(L) \subset X(L)$ to the chain homomorphism $\nu$ discussed prior to the statement of the lemma. Now arguing as in \cite[\S7]{May:1966}, one can show that $\nu$ is given in terms of the shuffle product on $\Bbar(V)$ via a formula similar to \cite[(7.2)]{May:1966}. In particular, $\nu(u)$ can be written as a linear combination of standard monomials $[z_1|\ldots|z_n]$ such that in the ordered list $z_1,\ldots,z_n$, the elements $x_{i_1},\ldots,x_{i_b}$ each appear once, and $y_i$ appears $a_i$ times. Then $[y_j]^n$ can appear in $\nu(u)$ only if $u = \gamma_n(y_j)$. 
\end{proof}

As in Section \ref{subsection:Mayspecseq}, define a decreasing filtration on the complex $\Hom_V(X_\bullet(L),k)$ by
\begin{equation} \label{eq:X(L)cochainfiltration}
F^i \Hom_{V}(X_n(L),k) = \Hom_{V}(X_n(L)/(V.F_{i-1}\Xbar_n(L)),k).
\end{equation}
Then there exists a spectral sequence $D_r(k) \Rightarrow \Hbul(V,k)$ with
\begin{align*}
D_0^{i,j}(k) &= F^i \Hom_{V}(X_{i+j}(L),k) / F^{i+1} \Hom_{V}(X_{i+j}(L),k) \\
&\cong \Hom_k(F_i \Xbar_{i+j}(L)/F_{i-1} \Xbar_{i+j}(L),k),
\end{align*}
One can check that $D_0(k)$ is isomorphic as a complex to $\Hom_{\vlab}(X(\lab),k)$. Since the differential on $X(\lab)$ induces the trivial differential on $\Hom_{\vlab}(X(\lab),k)$ (cf.\ Remarks \ref{remark:trivdif} and \ref{remark:tnzero}, and the explicit formula for $t_2$), it follows that $D_0(k) \cong D_1(k) \cong \Hbul(\vlab,k)$. Now $\mu: X(L) \rightarrow B(V)$ induces a morphism of spectral sequences $E_r(k) \rightarrow D_r(k)$. Since $\gr(\mu): X(\lab) \rightarrow B(\vlab)$ is a chain homomorphism lifting the identity $k \rightarrow k$, it follows that $\mu$ induces an isomorphism $E_1(k) \cong D_1(k)$, and hence by \cite[Theorem 3.4]{McCleary:2001} that $\mu$ induces isomorphisms $E_r^{i,j}(k) \cong D_r^{i,j}(k)$ for all $1 \leq r \leq \infty$.

\begin{lemma} \label{lemma:cocycles}
Given $1 \leq i \leq t$, let $f_i \in \Hom_V(X_p(L),k)$ be the homomorphism that evaluates to $1$ on $\gamma_p(y_i)$, and that evaluates to $0$ on all other standard basis monomials in $X_p(L)$. Then $f_i$ is a cocycle in $\Hom_V(X_\bullet(L),k)$. Similarly, given $1 \leq j \leq s$, let $g_j \in \Hom_V(X_2(L),k)$ be the homomorphism that evaluates to $1$ on $\gamma_1'(x_j)$, and that evaluates to $0$ on all other standard basis monomials in $X_2(L)$. Then $g_j$ is a cocycle in $\Hom_V(X_2(L),k)$.
\end{lemma}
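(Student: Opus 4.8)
The plan is to verify the cocycle conditions $\partial(f_i) = (-1)^p\,f_i\circ d_t = 0$ and $\partial(g_j) = g_j\circ d_t = 0$ directly, by evaluating on the standard basis monomials of $X_{p+1}(L)$ and $X_3(L)$. Because $f_i$ and $g_j$ are $V$-module homomorphisms into the trivial module $k$, and because $\ve$ vanishes on every PBW monomial of positive length, evaluating $f_i$ (resp.\ $g_j$) on $d_t(w)$ just extracts the coefficient of the monomial $1\cdot\gamma_p(y_i)$ (resp.\ $1\cdot\gamma_1'(x_j)$) with trivial $V$-coefficient in $d_t(w)$. So it suffices to show that this coefficient vanishes for each standard monomial $w$. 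Throughout I will use the splitting $d_t(w\otimes b) = d(w)\otimes b + (-1)^{\deg(w\otimes b)}(w\otimes b)\cap t$ together with three gradings of the target: its $\Gamma'(\Lzero)$-degree, its $\Lambda(\Lzero)$-degree, and its $\Gamma(\Lone)$-degree. The governing facts are that the Koszul part $d(w)\otimes b$ leaves the $\Gamma'(\Lzero)$-factor $b$ fixed, whereas the cap product with $t$ strictly lowers $\Gamma'(\Lzero)$-degree, since $t$ carries $\Gamma'(\Lzero)$ into $Y(\Lzero)$.

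For $g_j$ this dichotomy already suffices, because $\gamma_1'(x_j)$ sits in the lowest nonzero $\Gamma'(\Lzero)$-degree. In the Koszul part the $\Gamma'$-factor forces $b = \gamma_1'(x_j)$, hence $\deg(w) = 1$; but $d(\subgrp{x_a}) = x_a$ and $d(\gamma_1(y_c)) = y_c$ each have $V$-coefficient a generator, never $1$, so no $1\cdot\gamma_1'(x_j)$ term arises. In the twisting part, leaving a surviving factor $\gamma_1'(x_j)$ while still applying $t$ nontrivially would require $b$ of $\Gamma'(\Lzero)$-degree at least $2$, i.e.\ external degree at least $4$, forcing $\deg(w)\le -1$ --- impossible in $X_3(L)$. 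Hence $g_j\circ d_t = 0$.

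The case of $f_i$ is the main obstacle and divides along the same dichotomy. When the twisting part is responsible --- the $b' = b$, $b'' = 1$ term of the coproduct, contributing $\pm\,w\cdot t(b)$ --- I will invoke Lemma \ref{lemma:twistingcochain}: $t(b)$ lies in $Y_{2n-1}(\Lzero)\subset U(\Lzero)\otimes\Lambda^{2n-1}(\Lzero)$, so its image in $W(\Lzero)$ carries $\Lambda(\Lzero)$-degree at least $2n-1\ge 1$. Since the right action of $V(\Lzero)$ on $\Ybar(L)$ preserves both the $\Lambda(\Lzero)$-degree and the $\Gamma(\Lone)$-degree --- one checks this on the generators $\subgrp{x'}$ and $\gamma_r(y)$ and extends by the derivation property --- the product $w\cdot t(b)$ has positive $\Lambda(\Lzero)$-degree and therefore cannot contain the pure divided power $\gamma_p(y_i)$. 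When the Koszul part is responsible, forcing $b = 1$ and $w\in\Ybar_{p+1}(L)$, I will run through the explicit differential of Theorem \ref{theorem:Koszulresolution}: the non-bracket terms have $V$-coefficient a generator $\ne 1$, and among the bracket terms only the $[x_{i_j},y_\ell]$-terms can produce a monomial of $\Lambda(\Lzero)$-degree zero, and then only when $w = \subgrp{x_{i_1}}\gamma_{a_1}(y_1)\cdots\gamma_{a_t}(y_t)$. A direct computation of the resulting divided-power product shows the coefficient of $\gamma_p(y_i)$ is always a multiple of the binomial in $\gamma_1(y_i)\gamma_{p-1}(y_i) = \binom{p}{1}\gamma_p(y_i) = p\,\gamma_p(y_i)$, hence $0$ in characteristic $p$. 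This characteristic-$p$ cancellation is the crux; together with the $\Lambda(\Lzero)$-degree obstruction for the twisting part, it yields $f_i\circ d_t = 0$.
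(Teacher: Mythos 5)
Your proof is correct, and although it shares the paper's overall skeleton --- direct evaluation on standard basis monomials, with $d_t$ split into its Koszul part and its twisting part according to whether the $\Gamma'(\Lzero)$-factor survives --- both crux steps are argued differently from the paper. For the twisting part you use positivity of $\Lambda(\Lzero)$-degree: since $t(b)$ lies in $U(\Lzero)\otimes\Lambda^{2n-1}(\Lzero)$ and the right action of $V(\Lzero)$ on $\Ybar(L)$ preserves the $(\Lambda(\Lzero),\Gamma(\Lone))$-bidegree (so that, after commuting the $V(\Lzero)$-coefficients of $t(b)$ past $w$ via the smash-product relation, every resulting monomial has $\Lambda(\Lzero)$-degree at least $2n-1\geq 1$), the pure divided power $\gamma_p(y_i)$ cannot appear in $w\cdot t(b)$. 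The paper instead argues via the $\Gamma(\Lone)$-degree (the second paragraph of the proof of Lemma \ref{lemma:mu}: right multiplication by elements of $W(\Lzero)$ cannot raise $\Gamma(\Lone)$-degree, and the $\Ybar$-part of $w$ has $\Gamma(\Lone)$-degree at most $p-1$), and then needs a separate internal-parity argument for the extreme case $2c=p+1$; your single argument covers all cases with $b\neq 1$ uniformly, which is a genuine simplification. For the Koszul part ($b=1$) the paper avoids computation by identifying $f_i$ on $W(L)$ with $\pm{y_i^*}^{\odot p}\in \Cbul(L,k)$, which is a cocycle by the derivation-plus-graded-commutativity argument in the proof of Theorem \ref{theorem:fgforU(L)}; you instead re-derive this by hand from the explicit differential, correctly locating the only dangerous terms (the $[x_{i_1},y_\ell]$-terms, which alone can have $\Lambda(\Lzero)$-degree zero, forcing $w=\subgrp{x_{i_1}}\gamma_{a_1}(y_1)\cdots\gamma_{a_t}(y_t)$) and checking that their $\gamma_p(y_i)$-coefficients always carry the factor $\gamma_1(y_i)\gamma_{p-1}(y_i)=\binom{p}{1}\gamma_p(y_i)=0$. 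Your $g_j$ argument is an equivalent, slightly streamlined reorganization of the paper's two-case check. In sum, the paper's route buys brevity by reusing Lemma \ref{lemma:mu} and Theorem \ref{theorem:fgforU(L)}, while yours buys a self-contained verification whose only inputs are Lemma \ref{lemma:twistingcochain} and the divided-power relations.
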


\begin{proof}
Suppose that $\partial(f_i) = (-1)^{\deg(f)} f_i \circ d_t \neq 0$. Since $f_i$ and $d_t$ are $V$-super\-module homo\-morphisms, and since $V$ acts trivially on $k$, this implies that there exists a standard basis monomial
\[
w = \subgrp{x_{i_1} \cdots x_{i_b}} \gamma_{a_1}(y_1) \cdots \gamma_{a_t}(y_t) \gamma_{c_1}'(x_1) \cdots \gamma_{c_s}'(x_s) \in \Xbar_{p+1}(L)
\]
such that $f_i(d_t(w)) \neq 0$. First suppose that $c_j = 0$ for each $1 \leq j \leq s$, so that we may consider $w$ as a monomial in $Y(L)$. Then in the notation from the proof of Lemma \ref{lemma:cochainring}, $({y_i^*}^{\odot p})(d(w)) = f_i(d_t(w)) \neq 0$, a contradiction because ${y_i^*}^{\odot p}$ is a cocycle in $C^p(L,k)$. Now suppose $c := \sum_{j=1}^s c_j > 0$. Write $w = uz$ as in \eqref{eq:uzfactorization}. Since $f_i$ vanishes on $\bigoplus_{n > 0} W(L) \otimes \Gamma_{2n}'(\Lzero)$, it follows that $f_i(d_t(w)) = (-1)^{\deg(u)} f_i(u.t_{2c}(z))$. Since $\deg(w) = p+1$ and $\deg(z) = 2c \geq 2$, it follows that $\sum_{i=1}^t a_j \leq p-1$. Now arguing as in the second paragraph of the proof of Lemma \ref{lemma:mu}, it follows that $\gamma_p(y_j)$ cannot occur as a summand when $u.t_{2c}(z)$ is written as a linear combination of standard basis monomials in $X(L)$, and hence that $f_i(d_t(w)) = 0$, a contradiction. Next suppose $2c = p+1$. Then $w = z$, and $f_i(d_t(w)) = f_i(\pi \circ t_{2c}(z))$, where $\pi: Y(\Lzero) \rightarrow W(L) \subset X(L)$ is the natural map. But $t_{2c}(z) \in Y(\Lzero)$ has $\Z_2$-degree $\ol{0}$, whereas $f_i$ is nonzero only on elements of $\Z_2$-degree $\ol{1}$. Then $f_i(d_t(w)) = 0$. Thus, in all cases we reach a contradiction, so we conclude that in fact $\partial(f_i) = 0$.

The proof that $g_j$ is a cocycle is similar to that for $f_i$. Suppose $w \in \Xbar_3(L)$ is a standard basis monomial such that $g_j(d_t(w)) \neq 0$. Then either $w$ is an element of the subspace $W(L) \subset X(L)$, or $w = u \gamma_1'(x_\ell)$ for some $1 \leq \ell \leq s$ and some degree-$1$ basis monomial $u \in \Ybar(L)$ as in \eqref{eq:uzfactorization}. In the first case, $g_j(d_t(w)) = g_j(d(w)) = 0$ because $g_j$ vanishes on the subspace $W(L) \subset X(L)$. In the second case, $g_j(d_t(w)) = d(u).g_j(\gamma_1'(x_\ell)) - g_j(u.t_2(\gamma_1'(x_\ell)))$. The first term in this expression vanishes because $d(u) \in V(L)$ acts trivially on $k$, and the second term vanishes because $u.t_2(\gamma_1'(x_\ell)) \in W(L) \subset X(L)$. Then we conclude that $g_j$ is also a cocycle in $\Hom_V(X_\bullet(L),k)$.
\end{proof}

\begin{proposition} \label{proposition:permanentcycles}
Suppose $k$ is perfect. In the spectral sequence $E_r(k) \Rightarrow \Hbul(V,k)$, the subalgebra $S({\Lone}^*)^p \otimes S'({\Lzero}^*) \subset E_1(k)$ is generated by permanent cycles.
\end{proposition}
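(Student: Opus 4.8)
The plan is to transport the problem to the more explicit spectral sequence $D_r(k)$ by means of the isomorphism of spectral sequences induced by $\mu \colon X(L) \to B(V)$, which was shown above to induce isomorphisms $E_r^{i,j}(k) \cong D_r^{i,j}(k)$ for all $r \geq 1$; write $\mu^* \colon E_r(k) \to D_r(k)$ for this isomorphism. Since $\mu^*$ commutes with every differential $d_r$ and is bijective, an element of $E_1(k)$ is a permanent cycle if and only if its image in $D_1(k)$ is a permanent cycle. Moreover $E(k)$ is a spectral sequence of algebras, so its permanent cycles form a subalgebra. Because $k$ is perfect, $S({\Lone}^*)^p$ is the polynomial subalgebra generated by the elements ${y_i^*}^{\odot p}$ for $1 \leq i \leq t$, and $S'({\Lzero}^*)$ is the polynomial algebra generated by its external degree-$2$ subspace ${\Lzero}^*$, with basis $x_1^*,\ldots,x_s^*$. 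Thus it suffices to prove that the images under $\mu^*$ of these algebra generators are permanent cycles in $D_1(k)$.

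For the generators of $S({\Lone}^*)^p$ I would use the cochains $f_i$ of Lemma \ref{lemma:cocycles} together with their bar-complex counterparts. Let $\phi_i \colon B_p(V) \to k$ denote the homomorphism of Remark \ref{remark:barcomplexbasis} that sends $[y_i]^p$ to $1$ and vanishes on all other standard basis monomials of $\Bbar_p(V)$; by that remark $\phi_i$ descends in $E_1(k)$ to a representative of a generator of $S({\Lone}^*)^p$. Applying $\mu^*$ at the cochain level and invoking Lemma \ref{lemma:mu}, for any standard monomial $w \in \Xbar_p(L)$ the value $\phi_i(\mu(w))$ equals the coefficient of $[y_i]^p$ in $\mu(w)$, which is $1$ if $w = \gamma_p(y_i)$ and $0$ otherwise. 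Hence $\mu^*(\phi_i) = \phi_i \circ \mu = f_i$, so the image in $D_1(k)$ of this generator of $S({\Lone}^*)^p$ is exactly the class of $f_i$.

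For the generators of $S'({\Lzero}^*)$ I would argue by bidegree rather than by an explicit cochain comparison. Since $\gamma_1'(x_j)$ has filtration degree $p$ in $\Xbar(L)$, the cocycle $g_j$ lies in $F^p \Hom_V(X_2(L),k)$ and so contributes to the bidegree $(p,2-p)$. The hypothesis $p > 2$ is essential here: among the external degree-$2$ monomials of $\Xbar_2(L)$, those of filtration exactly $p$ are precisely the $\gamma_1'(x_\ell)$, so $D_0^{p,2-p}(k) = D_1^{p,2-p}(k) \cong \Hom_k(\Gamma_2'(\Lzero),k)$, a space of dimension $s$ having the classes of the $g_j$ as a basis. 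On the other hand, Proposition \ref{proposition:E1page} identifies $E_1^{p,2-p}(k) \cong {\Lzero}^*$ with the generating space of $S'({\Lzero}^*)$. As $\mu^*$ restricts to an isomorphism $E_1^{p,2-p}(k) \to D_1^{p,2-p}(k)$, it carries this generating space onto the span of the classes of the $g_j$; in particular each degree-$2$ generator $x_j^*$ of $S'({\Lzero}^*)$ maps to a linear combination of the classes of the $g_j$.

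It remains to observe that the classes of $f_i$ and $g_j$ are permanent cycles in $D_1(k)$, which is immediate from Lemma \ref{lemma:cocycles}: each is an honest cocycle in the filtered complex $\Hom_V(X_\bullet(L),k)$, and an honest cocycle represents a class surviving to $D_\infty(k)$. Since linear combinations of permanent cycles in a fixed bidegree are again permanent cycles, the images under $\mu^*$ of all the chosen generators of $S({\Lone}^*)^p \otimes S'({\Lzero}^*)$ are permanent cycles; hence these generators are themselves permanent cycles in $E_1(k)$, and the subalgebra they generate is $S({\Lone}^*)^p \otimes S'({\Lzero}^*)$. The \emph{main obstacle} is the bidegree identification of the third paragraph: one must check that the honest cocycles $g_j$ really do account for the whole generating space of $S'({\Lzero}^*)$ inside $D_1(k)$, which is precisely where perfectness of $k$ and the inequality $p > 2$ (separating $\Gamma_2'(\Lzero)$ from $\Ybar_2(L)$ in the filtration) are used. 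By contrast, the corresponding identification for the $f_i$ is transparent, since Lemma \ref{lemma:mu} pins down $\mu^*(\phi_i) = f_i$ on the nose.
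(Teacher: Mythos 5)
Your proof is correct and follows essentially the same route as the paper's: reduce to the explicit spectral sequence $D_r(k)$ via the comparison morphism $\mu$, identify the images of the generators of $S({\Lone}^*)^p$ with the cocycles $f_i$ of Lemma \ref{lemma:cocycles} by means of Lemma \ref{lemma:mu}, and observe that all of $D_1^{p,2-p}(k) \cong \Hom_k(\Gamma_2'(\Lzero),k)$ consists of classes of honest cocycles (the $g_j$), hence of permanent cycles. The only difference is expository: you spell out bookkeeping the paper leaves implicit, such as the filtration-degree computation showing that $\Gamma_2'(\Lzero)$ is exactly the filtration-$p$ piece of $\Xbar_2(L)$ when $p > 2$.
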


\begin{proof}
Since $E_r^{i,j}(k) \cong D_r^{i,j}(k)$ for all $r \geq 1$, it suffices to show that a set of generators for $S({\Lone}^*)^p \otimes S'({\Lzero}^*)$ in $E_1(k)$ maps onto a set of permanent cycles in $D_1(k)$. The subalgebra $S'({\Lzero}^*) \subset E_1(k)$ is generated by ${\Lzero}^* \cong E_1^{p,2-p}(k)$. Since $D_1^{p,2-p}(k) \cong D_0^{p,2-p}(k) \cong \Hom_k(\Gamma_2'(\Lzero),k)$, it follows from Lemma \ref{lemma:cocycles} that each element of $D_0^{p,2-p}(k)$ lifts to a cocycle in $\Hom_V(X(L),k)$, and hence that $D_0^{p,2-p}(k) \cong D_1^{p,2-p}(k)$ consists of permanent cycles in $D_1(k)$. Next, let $f_i \in C^p(V,k)$ be as defined in Remark \ref{remark:barcomplexbasis}. Then the $f_i$ for $1 \leq i \leq t$ descend in $E_1(k)$ to a set of generators for $S({\Lone}^*)^p$. Moreover, it follows from Lemma \ref{lemma:mu} that $f_i$ maps under the morphism $\mu^*: \Hom_V(B_p(V),k) \rightarrow \Hom_V(X_p(L),k)$ to the cocycle named $f_i$ in Lemma \ref{lemma:cocycles}. Then it follows from Lemma \ref{lemma:cocycles} that $f_i$ maps to a permanent cycle in $D_1(k)$.
\end{proof}

\begin{theorem} \label{theorem:V(L)cohomfingen}
Let $k$ be a perfect field of characteristic $p > 2$, and let $L$ be a finite-dimensional restricted Lie super\-algebra over $k$. Let $M$ be a finite-dimensional left $V(L)$-super\-module. Then the cohomology ring $\Hbul(V(L),k)$ is a finitely-generated graded super\-algebra, and $\Hbul(V(L),M)$ is a finitely-generated graded right super\-module over $\Hbul(V(L),k)$.
\end{theorem}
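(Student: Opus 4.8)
The plan is to assemble the ingredients established in this section into the standard spectral-sequence finite-generation argument. Write $V = V(L)$ and set $A = S({\Lone}^*)^p \otimes S'({\Lzero}^*)$, regarded as a commutative subalgebra of $E_1(k)$. Three facts are already in hand. First, by Proposition \ref{proposition:E1page} and the opening discussion of Section \ref{subsection:comparison}, the identifications $E_1(k) \cong \Hbul(\vlab,k)$ and $E_1(M) \cong M \otimes \Hbul(\vlab,k)$ exhibit $E_1(k)$ and $E_1(M)$ as finitely-generated graded right super\-modules over $A$, where finite-dimensionality of $M$ is what guarantees module-finiteness of $E_1(M)$. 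Second, $A$ is a finitely-generated commutative $k$-algebra, hence noetherian. Third, by Proposition \ref{proposition:permanentcycles}, $A$ is generated by permanent cycles in $E_r(k)$.

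First I would propagate module-finiteness up the spectral sequence. Because $A$ consists of permanent cycles, each differential $d_r$ is a homomorphism of $A$-modules, so $E_{r+1}(k)$ is a subquotient of $E_r(k)$, and $E_{r+1}(M)$ a subquotient of $E_r(M)$, as $A$-modules. Since $A$ is noetherian and the $E_1$-pages are finitely generated over $A$, an easy induction shows every subsequent page is finitely generated over $A$. The filtration \eqref{eq:cobarfiltration} is bounded in each cohomological degree, so the spectral sequence converges and its abutment identifies $E_\infty(k)$ and $E_\infty(M)$ as the associated graded objects $\gr \Hbul(V,k)$ and $\gr \Hbul(V,M)$; these are then finitely generated over the image $\ol{A}$ of $A$ in $E_\infty(k)$.

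Next I would descend from the associated graded back to cohomology. Lifting a finite generating set of permanent cycles for $A$ to genuine classes in $\Hbul(V,k)$ produces a finitely-generated subalgebra $B \subseteq \Hbul(V,k)$ whose associated graded satisfies $\gr B \supseteq \ol{A}$. The previous step then shows that $\gr \Hbul(V,k)$ and $\gr \Hbul(V,M)$ are finitely generated over $\gr B$. The filtered-module lifting result \cite[III.2.9, Corollary 1]{Bourbaki:1998} upgrades this to the conclusions that $\Hbul(V,k)$ and $\Hbul(V,M)$ are each finitely generated as $B$-modules; since $B$ is a finitely-generated $k$-algebra, it follows that $\Hbul(V,k)$ is a finitely-generated graded super\-algebra, and that $\Hbul(V,M)$, being module-finite over $B \subseteq \Hbul(V,k)$, is a finitely-generated graded right super\-module over $\Hbul(V,k)$. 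This packaging is precisely \cite[Lemma 2.5]{Mastnak:2010}, which I would invoke directly.

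Since the genuinely delicate input—that the polynomial generators of $A$ are permanent cycles—has already been secured in Proposition \ref{proposition:permanentcycles}, I do not anticipate a serious obstacle in the proof of the theorem itself. The only points requiring care are checking that finite-dimensionality of $M$ really does make $E_1(M)$ module-finite over $A$, and that boundedness of the filtration ensures convergence so that $E_\infty$ computes $\gr \Hbul(V,-)$; both are routine consequences of the setup developed above.
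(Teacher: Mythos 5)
Your proposal is correct and takes essentially the same route as the paper: given Proposition \ref{proposition:permanentcycles} and the finite generation of $E_1(k)$ and $E_1(M)$ over $A = S({\Lone}^*)^p \otimes S'({\Lzero}^*)$, the paper's proof consists of citing \cite[Lemma 2.5]{Mastnak:2010} and \cite[III.2.9, Corollary 1]{Bourbaki:1998}, which is exactly the argument your paragraphs unpack. The one point the paper makes that you should be aware of before invoking \cite[Lemma 2.5]{Mastnak:2010} ``directly'' is that its published statement assumes the bigraded commutative algebra is concentrated in even total degrees --- which fails here, since the generators of $S({\Lone}^*)^p$ sit in total degree $p$ --- and the paper observes that this hypothesis is superfluous precisely when one uses right actions, which is the same sign observation ($d_r(x \cdot a) = d_r(x)\cdot a$ for $a$ a permanent cycle) underlying your step that the differentials are right $A$-module maps.
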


\begin{proof}
While the statement of \cite[Lemma 2.5]{Mastnak:2010} assumes that the bigraded commutative algebra $A^{*,*}$ is concentrated in even total degrees, the reader can inspect the proof of the lemma to check that this assumption is superfluous when working with right actions instead of with left actions, and hence that the lemma can be applied to the bigraded commutative algebra $S({\Lone}^*)^p \otimes S'({\Lzero}^*) \subset E_1(k)$, even though it is not concentrated in even total degrees. Then by the discussion at the start of Section \ref{subsection:comparison}, the theorem follows from Proposition \ref{proposition:permanentcycles}.
\end{proof}

\begin{remark} \label{remark:perfectsuperfluous}
Let $k$ be an arbitrary field of characteristic $p > 2$, let $L$ be a finite-dimensional restricted Lie superalgebra over $k$, and let $k'$ be a field extension of $k$. Then $L \otimes k'$ is a restricted Lie superalgebra over $k'$, and one can check that $\Hbul(V(L),k) \otimes k' \cong \Hbul(V(L \otimes k'),k')$ as $k'$-algebras. Moreover, $\Hbul(V(L),k) \otimes k'$ is finitely-generated as a $k'$-algebra if and only if $\Hbul(V(L),k)$ is finitely-generated as a $k$-algebra. This shows that the perfectness assumption in Theorem \ref{theorem:V(L)cohomfingen} is superfluous, since one can always first extend scalars to the perfect field $k' = \ol{k}$ if necessary.
\end{remark}

\begin{remark} \label{remark:Liu}
The functions $f_i \in C^p(V,k)$ defined in Remark \ref{remark:barcomplexbasis} appear to be closely related to the functions $f_{\alpha_i}$ defined by Liu in the case that $L = \g$ is a basic classical restricted Lie superalgebra \cite[p.\ 172]{Liu:2012}. More accurately, $f_{\alpha_i}$ appears related to the cup product $f_i \odot f_i \in C^{2p}(V,k)$. In Liu's notation, the $f_{\alpha_i}$ are defined in terms of coefficients of a monomial $x_{\alpha_i}^{N_i}$ in the universal enveloping algebra $\Ug$. Liu asserts that $\Hbul(\Vg,k)$ is finitely generated over the subalgebra generated by the classes of the $f_{\alpha_i}$. The proof of this claim involves showing that the $f_{\alpha_i}$ descend in a spectral sequence to representatives for certain nonzero cohomology classes over which the $E_1$-page of the spectral sequence is finitely generated; see \cite[p.\ 173]{Liu:2012}. However, if in Liu's notation one has $\alpha_i \in \Phi_{11}$, then $x_{\alpha_i}^{N_i} = x_{\alpha_i}^2 = 0$ in $\Ug$, and hence $f_{\alpha_i} = 0$. Thus in this case, $f_{\alpha_i}$ cannot descend to the desired nonzero cohomology class. Even if one redefines Liu's function $f_{\alpha_i}$ to conform with the cup product $f_i \odot f_i$ as defined here, arguments beyond those presented in Liu's paper seem to be required to show that the $f_{\alpha_i}$ are cocycles for $\Vg$. (Additional corrections to Liu's arguments are also required starting on page 167 of his paper, since he applies the same incorrect interpretation of \cite[Theorem 4.1]{Mastnak:2010} that we comment upon in Remark \ref{remark:MPSW}.)
\end{remark}

\subsection{Reindexing the May spectral sequence}

As in \cite[I.9.16]{Jantzen:2003}, we can reindex \eqref{eq:Mayspecseq} into a first quadrant spectral sequence so that the new $E_r^{i,j}$-term is equal to the old $E_{(p-2)r+1}^{(p-1)i+j,-(p-2)i}$.\footnote{This expression is incorrectly typeset as $E_{(p-2)r+1}^{i+j,-(p-2)i}$ in \cite[I.9.16]{Jantzen:2003}.} Then the spectral sequence can be rewritten as
\begin{equation} \label{eq:E0V(L)specseq}
E_0^{i,j}(M) = M \otimes \Lambda_s^{j-i}(L^*) \otimes S^i({\Lzero}^*) \Rightarrow \opH^{i+j}(V(L),M).
\end{equation}
Similarly, $D_r(k) \Rightarrow \Hbul(V,k)$ can be reindexed, and then $D_r^{i,j}(k) \cong E_r^{i,j}(k)$ for all $r \geq 0$. For the remainder of this section we utilize this new indexing of $E_r(M)$ and $D_r(M)$.

\begin{remark} \label{remark:Bagci}
The indexing \eqref{eq:E0V(L)specseq} appears to be the spectral sequence envisioned by Bagci \cite[Proposition 3.4]{Bagci:2012a}, though we were unable to verify the details of his con\-struction, nor does he appear to obtain as explicit a description of the $E_0$-page as we do. Using this spectral sequence, Bagci claims to show that the cohomology ring of a finite-dimensional restricted Lie super\-algebra is a finitely-generated algebra \cite[Theorem 3.5]{Bagci:2012a}. Bagci's argument is based on the fact that $E_0(k)$ is finitely-generated over the subalgebra $S({\Lone}^*) \otimes S({\Lzero}^*)$, though his argument is incomplete since he fails to show any part of this subalgebra to consist of permanent cycles.
\end{remark}

\begin{theorem} \label{theorem:V(L)specseqE1page}
The spectral sequence \eqref{eq:E0V(L)specseq} has $E_1$-page
\[
E_1^{i,j}(M) = \opH^{j-i}(L,M) \otimes S^i({\Lzero}^*).
\]
\end{theorem}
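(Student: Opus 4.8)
The plan is to identify the $d_0$-differential of the reindexed spectral sequence \eqref{eq:E0V(L)specseq} with the Lie super\-algebra cochain differential. Recall from Section \ref{subsection:ordinarycohomology} that $\Cbul(L,M) \cong M \otimes \Lambda_s(L^*)$ as a complex, with differential $\partial$ whose cohomology is $\opH^\bullet(L,M)$. In the reindexed (first-quadrant, cohomological) indexing the differential $d_0 \colon E_0^{i,j}(M) \to E_0^{i,j+1}(M)$ fixes the filtration degree $i$ and raises $j$ by one, so for each fixed $i$ it is a differential on $E_0^{i,\bullet}(M) \cong \Cbul(L,M) \otimes S^i({\Lzero}^*)$ that raises the $\Lambda_s$-degree $j-i$ by one. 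Once I show that this differential is $\partial \otimes 1$, taking cohomology in the $j$-direction gives at once $E_1^{i,j}(M) = \opH^{j-i}(L,M) \otimes S^i({\Lzero}^*)$, as claimed.

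First I would reduce to the column $i = 0$. The reindexed $E(M)$ is a module over $E(k)$, and by Proposition \ref{proposition:permanentcycles} the subalgebra $S'({\Lzero}^*)$ is generated by permanent cycles; since survival to $E_\infty$ is insensitive to the reindexing, the generators of $S'({\Lzero}^*)$ are $d_0$-cocycles on the reindexed $E_0$-page. The derivation property of $d_0$ then makes it right $S'({\Lzero}^*)$-linear, and since $E_0(M) = \Cbul(L,M) \otimes S'({\Lzero}^*)$ is generated over $S'({\Lzero}^*)$ by the column $E_0^{0,\bullet}(M) \cong \Cbul(L,M)$, the differential $d_0$ is determined by its restriction there. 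It therefore suffices to prove $d_0 = \partial$ on $E_0^{0,\bullet}(M)$.

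This last identification is the heart of the argument, and I expect the filtration bookkeeping to be the main obstacle. To make the differential explicit I would pass to the resolution $X(L)$ of Theorem \ref{theorem:X(L)resolution} and its spectral sequence $D_r(M) \Rightarrow \Hbul(V,M)$, obtained by filtering $\Hom_V(X(L),M)$ as in \eqref{eq:X(L)cochainfiltration} (with $M$ in place of $k$): the chain map $\mu \colon X(L) \to B(V)$ induces a morphism $E_r(M) \to D_r(M)$ for arbitrary $M$, and because $\gr(\mu)$ lifts the identity $k \to k$ it is an isomorphism on $E_1$-pages and hence, by the comparison theorem, on all pages, so I may compute with the explicit differential $d_t$. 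The column $i = 0$ of $D_0(M)$ consists of the cochains dual to the pure monomials of $\Ybar(L)$ (those with no $\gamma_c'(x_j)$ factors); on the subcomplex $W(L) \subset X(L)$ one has $d_t = d$, and under the identification $\Hom_V(W(L),M) \cong \Hom_{U(L)}(Y(L),M) = \Cbul(L,M)$ this column is exactly $\Cbul(L,M)$. A degree count shows that a pure $\Ybar(L)$-monomial of external degree $m$ has filtration exactly $m$, while in external degree $m+1$ the lowest occurring filtration is again $m+1$, since a factor $\gamma_1'(x_j)$ raises the filtration by $p$ against an external-degree increase of only $2$. Consequently the cochain differential $\phi \mapsto \pm \phi \circ d_t$ raises the cochain filtration by exactly one on this column, so its leading term is the un-reindexed $d_1$, namely the transpose of the Koszul differential $d$ --- which is the full Lie super\-algebra cochain differential $\partial$ incorporating both the module action $d(\subgrp{x_i}) = x_i$ and the bracket terms. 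Moreover, for $1 < r < p-1$ the un-reindexed $d_r$ has empty target on this column, because reaching filtration $m+r$ in external degree $m+1$ forces a $\gamma'$-factor count $c$ with $c(p-2) = r-1$, which has no solution in that range.

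Since the reindexed $d_0$ is assembled from the un-reindexed differentials $d_1, \dots, d_{p-2}$, the vanishing of $d_2, \dots, d_{p-2}$ on the column $i = 0$ shows that it reduces there to $d_1 = \partial$. Combined with the reduction of the second paragraph, this gives $d_0 = \partial \otimes 1$ on all of $E_0(M)$, and taking $d_0$-cohomology yields $E_1^{i,j}(M) = \opH^{j-i}(L,M) \otimes S^i({\Lzero}^*)$.
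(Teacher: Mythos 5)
Your proof is correct, and it shares the paper's skeleton: both arguments use Proposition \ref{proposition:permanentcycles} together with the derivation property to make $d_0$ right $S'({\Lzero}^*)$-linear and so reduce to the column $i=0$, and both identify the column differential by passing through the isomorphism $E_r \cong D_r$ with the spectral sequence coming from the explicit resolution $X(L)$. The difference lies in how the column differential is pinned down. The paper pushes the multiplicative reduction one step further: since $\Cbul(L,M) \cong M \otimes \Cbul(L,k)$ and $\Cbul(L,k) \cong \Lambda_s(L^*)$ is generated in degree one, it suffices to identify only the two maps $d_0^{0,0}(M)$ and $d_0^{0,1}(k)$; the first is computed directly in the bar complex (namely $\partial(\wt{m})([z]) = (-1)^{\ol{z}\cdot\ol{m}} z.m$), and only the second is computed via $D_0(k)$, where it is the transpose of the Lie bracket. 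You instead compute the entire column differential at once inside $D(M)$, using the filtration count (a pure $\Ybar(L)$-monomial of external degree $m$ has filtration exactly $m$, while each $\gamma'$-factor costs $p$ in filtration against $2$ in external degree) to see that the leading term of $\phi \mapsto \pm\phi \circ d_t$ on duals of pure monomials is $\pm\phi \circ d$, i.e., the full differential of $\Hom_V(W(L),M) \cong \Cbul(L,M)$. Your route is more uniform --- the module-action and bracket terms come out together, with no separate bar-complex calculation --- but it requires extending the $D$-spectral sequence and the comparison morphism $\mu^*$ to coefficients in $M$, which the paper never does (it only defines $D_r(k)$); as you indicate, that extension is routine. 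One small imprecision: the reindexed $d_0$ is not ``assembled'' from the un-reindexed $d_1,\dots,d_{p-2}$; rather, the un-reindexed $d_2,\dots,d_{p-2}$ vanish identically for bidegree reasons (this is implicit in the validity of the reindexing \eqref{eq:E0V(L)specseq}), so that the reindexed $d_0$ equals the un-reindexed $d_1$ outright; your degree count verifies this vanishing on the column, which is all you actually use.
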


\begin{proof}
Our argument mimics that in \cite[I.9.18, I.9.20]{Jantzen:2003}. First, it follows from Proposition \ref{proposition:permanentcycles} that the subalgebra $S({\Lzero}^*)  = \bigoplus_{i = 0}^\infty E_0^{i,i}(k)$ of $E_0(k)$ consists of permanent cycles. In particular, this subalgebra lies in the kernel of the differential $d_0$ on $E_0(k)$. Then by the derivation property of the differentials on $E_0(M)$ and $E_0(k)$, and by the discussion preceding Theorem \ref{theorem:fgforU(L)}, it suffices to show that the maps $d_0^{0,0}(M): E_0^{0,0}(M) \rightarrow E_0^{0,1}(M)$ and $d_0^{0,1}(k): E_0^{0,1}(k) \rightarrow E_0^{0,2}(k)$ identify with the maps $M \rightarrow M \otimes \Lambda_s^1(L^*)$ and $\Lambda_s^1(L^*) \rightarrow \Lambda_s^2(L^*)$ arising in the computation of the cohomology groups $\Hbul(L,M)$ and $\Hbul(L,k)$. As in \cite[I.9.18]{Jantzen:2003}, one can check that $d_0^{0,0}(M): E_0^{0,0}(M) \rightarrow E_0^{0,1}(M)$ identifies with the map
\begin{multline*}
M \cong C^0(V,M) \rightarrow F^1 C^1(V,M) / F^2 C^1(V,M) \\
\cong \Hom_k(F_1 \Bbar_1(V)/F_0 \Bbar_1(V),M) \cong \Hom_k(L,M) \cong M \otimes L^*
\end{multline*}
induced by $\partial: C^0(V,M) \rightarrow C^1(V,M)$. Given $z \in L$, one has $\partial(\wt{m})([z]) = \wt{m}(z) = (-1)^{\ol{z} \cdot \ol{m}}z.m$. Thus, $d_0^{0,0}(M)$ identifies with the map appearing in the computation of $\Hbul(L,M)$. Next, one can check as in \cite[I.9.18]{Jantzen:2003} that $\partial: C^1(V,k) \rightarrow C^2(V,k)$ maps $F^1 C^1(V,k)$ into $F^2 C^2(V,k)$, and that $d_0^{0,1}(k): E_0^{0,1}(k) \rightarrow E_0^{0,2}(k)$ identifies with the map
\begin{multline} \label{eq:d0(0,1)}
E_0^{0,1}(k) \cong F^1 C^1(V,k) / F^2 C^1(V,k) \rightarrow \\ E_0^{0,2}(k) \subset F^2 C^2(V,k) / \left( F^3 C^2(V,k) + \partial(F^2C^1(V,k)) \right)
\end{multline}
induced by $\partial: C^1(V,k) \rightarrow C^2(V,k)$. Since $E_0^{i,j}(k) \cong D_0^{i,j}(k)$, it suffices to show that the differential $D_0^{0,1}(k) \rightarrow D_0^{0,2}(k)$ admits the desired description. Replacing $F^i C^n(V,k)$ by $F^i \Hom_V(X_n(L),k)$ in \eqref{eq:d0(0,1)}, one has $D_0^{0,1}(k) \cong \Hom_k(\Ybar_1(L),k) \cong L^*$ and $D_0^{0,2}(k) \cong \Hom_k(\Ybar_2(L),k) \cong \Lambda_s^2(L)^* \cong \Lambda_s^2(L^*)$; cf.\ Remark \ref{remark:exteriorduality}. Now inspecting the differential on $X(L)$, it follows that the differential $D_0^{0,1}(k) \rightarrow D_0^{0,2}(k)$ identifies with the transpose of the Lie bracket, as desired.
\end{proof}

\begin{remark} \label{remark:FPreindexing}
As in \cite[Remark I.9.20]{Jantzen:2003}, an alternative reindexing of \eqref{eq:E0V(L)specseq} is possible in which
\begin{align*}
E_0^{2i,j}(M) &= M \otimes \Lambda_s^j(L^*) \otimes S^i({\Lzero}^*), \\
E_2^{2i,j}(M) &= \opH^j(L,M) \otimes S^i({\Lzero}^*),
\end{align*}
and $E_0^{i,j}(M) = E_2^{i,j}(M) = 0$ if $i$ is odd.
\end{remark}

\section{Supergroup schemes} \label{section:supergroupschemes}

The theory of affine supergroup schemes parallels the theory of ordinary affine group schemes; see \cite{Jantzen:2003}. In this section we recall certain aspects of the theory in preparation for establishing basic results on the cohomology of finite supergroup schemes.

\subsection{Affine supergroup schemes} \label{subsection:affinesupergroups}

Recall that a \emph{$k$-superfunctor} is a functor from $\mathfrak{csalg}_k$ to the category of sets. Given $k$-superfunctors $X$ and $X'$, write $\Mor(X,X')$ for the set of morphisms, i.e., natural transformations, from $X$ to $X'$. Given $R \in \csalg_k$, the \emph{superspectrum} of $R$ is the $k$-super\-functor $\Sspk R$ satisfying $(\Sspk R)(A) = \Hom_{\salg}(R,A)$ for each $A \in \csalg_k$. A $k$-superfunctor $X$ is an \emph{affine $k$-superscheme} if $X \cong \Sspk R$ for some $R \in \csalg_k$. Given an affine $k$-superscheme $X = \Sspk R$, we set $k[X] = R$, and call $k[X]$ the coordinate super\-algebra of $X$. The coordinate super\-algebra is well-defined up to isomorphism, since by Yoneda's Lemma, $\Mor(\Sspk R,\Sspk S) \cong \Hom_{\salg}(S,R)$, so $\Sspk R \cong \Sspk S$ if and only if $R \cong S$.

A \emph{$k$-supergroup functor} is a functor from $\csalg_k$ to the category of groups. A $k$-supergroup functor $G$ is an \emph{affine $k$-supergroup scheme} if it is both a $k$-supergroup functor and an affine $k$-superscheme, i.e., if there exists $k[G] \in \csalg_k$ such that $G(A) = \Hom_{\salg}(k[G],A)$ for each $A \in \csalg_k$. As for ordinary group schemes, the group morphisms on $G$ correspond uniquely to comorphisms on $k[G]$, and endow $k[G]$ with the structure of Hopf super\-algebra. Then the category of affine $k$-supergroup schemes is anti-equivalent to the category of commutative Hopf super\-algebras over $k$. We denote the product, coproduct, antipode, and augmentation maps on $k[G]$ by $m_{k[G]}$, $\Delta_G$, $\sigma_G$, and $\ve_G$, respectively, and write $I_\ve = \ker(\ve_G)$ for the augmentation ideal of $k[G]$.

Let $A \in \csalg_k$. Replacing $k$ by $A$, it makes sense to consider $A$-super\-functors, $A$-superschemes, $A$-supergroup functors, etc. Given a $k$-supergroup scheme $G$, the tensor product $A[G_A]:= k[G] \otimes A$ is naturally a commutative Hopf super\-algebra over $A$, and hence is the coordinate super\-algebra of an affine $A$-super\-group scheme, denoted $G_A$. If $A' \in \csalg_A$, then also $A' \in \csalg_k$, and the inclusion $k[G] \hookrightarrow A[G_A]$ induces an identification $G_A(A') = G(A')$. We call $G_A$ the supergroup scheme obtained via base change to $A$ from $G$.

Let $G$ be an affine $k$-supergroup scheme. The odd subspace $k[G]_{\ol{1}}$ generates a Hopf superideal $k[G] k[G]_{\ol{1}}$ in $k[G]$, hence defines a closed sub\-super\-group scheme $\Gev$ of $G$, called the \emph{underlying purely even subgroup of $G$}, which has coordinate super\-algebra $k[\Gev] = k[G]/(k[G]k[G]_{\ol{1}})$. Since $k[\Gev]$ is concentrated in degree $\ol{0}$, we can consider $\Gev$ as an ordinary affine group scheme. Given $A \in \csalg_k$, one has identifications $\Gev(A_{\ol{0}}) = \Gev(A)$ and $\Gev(A) = G(A_{\ol{0}})$ that are induced by the inclusion $A_{\ol{0}} \hookrightarrow A$ and the projection $k[G] \twoheadrightarrow k[\Gev]$, respectively. Conversely, if $G$ is an ordinary affine group scheme, then we can consider $G$ as a supergroup scheme with $G = \Gev$.

An affine $k$-supergroup scheme $G$ is \emph{algebraic} if $k[G]$ is finitely-generated as a superalgebra. Equivalently, $G$ is algebraic if $k[G]$ is a Hopf superalgebra quotient of $S_s(V)$ for some finite-dimen\-sional superspace $V$. An affine supergroup scheme $G$ is \emph{reduced} if $\Gev$ is reduced as an ordinary group scheme, that is, if $k[\Gev]$ has no nonzero nilpotent elements. An affine $k$-supergroup scheme $G$ is \emph{finite} if $k[G]$ is finite-dimensional over $k$, and is \emph{infinitesimal} if it is finite and if the augmentation ideal $I_\ve$ of $k[G]$ is nilpotent. If $G$ is infinitesimal, then the minimal $r \in \N$ such that $x^{p^r} = 0$ for all $x \in I_\ve$ is called the \emph{height} of $G$.

\subsection{Distributions and the Lie super\-algebra of a supergroup scheme} \label{subsection:distributions}

Let $G$ be an affine $k$-supergroup scheme. Given $n \in \N$, the set $\Dist_n(G)$ of \emph{distributions of order $\leq n$ on $G$} is defined by $\Dist_n(G) = \{ \mu \in k[G]^* : \mu(I_\ve^{n+1}) = 0 \}$. Set $\Dist(G) = \bigcup_{n \geq 0} \Dist_n(G)$. As for ordinary affine group schemes, it follows that the Hopf super\-algebra structure maps on $k[G]$ induce on $\Dist(G)$ the structure of a cocommutative Hopf super\-algebra over $k$. The product of $\mu,\nu \in \Dist(G)$ is defined by $\mu \nu := (\mu \otimes \nu) \circ \Delta_G : k[G] \rightarrow k$. With this super\-algebra structure, $\Dist(G)$ is called the \emph{super\-algebra of distributions on $G$}.

Given $\mu \in \Dist_m(G)$ and $\nu \in \Dist_n(G)$, one has $[\mu,\nu] := \mu \nu - (-1)^{\ol{\mu} \cdot \ol{\nu}} \nu \mu \in \Dist_{m+n-1}(G)$. In particular, the subspace $\Dist_1^+(G) := \{ \mu \in \Dist_1(G) : \mu(1) = 0 \} \cong (I_\ve/I_\ve^2)^*$ of $\Dist_1(G)$ is closed under the super\-commutator on $\Dist(G)$. Then $\Dist_1^+(G)$ admits the structure of a Lie super\-algebra, called \emph{the Lie super\-algebra of $G$}, which we denote by $\Lie(G)$. One can check that the coproduct on $\Dist(G)$ restricts to the map $\mu \mapsto \mu \otimes 1 + 1 \otimes \mu$ on $\Dist_1^+(G) = \Lie(G)$. As described in \cite[\S2.3]{Shu:2008}, the inclusion $\Gev \subset G$ induces a Lie algebra isomorphism $\Lie(\Gev) \cong \Lie(G)_{\ol{0}}$, and an injective super\-algebra homomorphism $\Dist(\Gev) \hookrightarrow \Dist(G)$. If $\chr(k) = p > 0$, it follows that $\Lie(G)$ is a restricted Lie super\-algebra, with the restriction map on $\Lie(G)_{\ol{0}}$ arising from the $p$-power map on $\Dist(G)$, and that $\Lie(\Gev) \cong \Lie(G)_{\ol{0}}$ is an isomorphism of restricted Lie algebras.

Observe that ${\Lie(G)_{\ol{1}}}^* \cong (I_\ve/I_\ve^2)_{\ol{1}} \cong k[G]_{\ol{1}}/((I_\ve)_{\ol{0}} \cdot k[G]_{\ol{1}})$. Then by \cite[Theorem 4.5]{Masuoka:2005}, there exists an isomorphism of augmented super\-algebras (though not necessarily of Hopf superalgebras)
\begin{equation} \label{eq:Masuoka}
k[G] \cong k[\Gev] \otimes \Lambda({\Lie(G)_{\ol{1}}}^*).
\end{equation}
This implies that $G$ is algebraic (resp.\ finite, infinitesimal) as an affine supergroup scheme if and only if $\Gev$ is algebraic (resp.\ finite, infinitesimal) as an ordinary affine group scheme; cf.\ \cite[Corollary 7.1]{Zubkov:2009}. In particular, if $G$ is infinitesimal, then the height of $G$ is equal to the height of $\Gev$.

\subsection{Representations} \label{subsection:representations}

Given $M \in \fsvec_k$, define $GL(M)$ to be the $k$-supergroup functor such that, for each $A \in \csalg_k$, $GL(M)(A)$ is the group of all even $A$-linear automorphisms of $M \otimes A$. Let $G$ be an affine $k$-supergroup scheme. We say that $M$ is a \emph{rational $G$-super\-module}, or that $M$ affords a \emph{rational representation of $G$}, if there exists a morphism of $k$-supergroup functors $\rho: G \rightarrow GL(M)$. Equivalently, $M$ is a rational $G$-super\-module if there exists an even linear map $\Delta_M: M \rightarrow M \otimes k[G]$ making $M$ into a right $k[G]$-supercomodule; cf.\ \cite[I.2.8]{Jantzen:2003}. Given such a map, and given $A \in \csalg_k$, an element $g \in G(A) = \Hom_{\salg}(k[G],A)$ acts on $m \otimes 1 \in M \otimes A$ via the map $(1 \otimes g) \circ \Delta_M : M \rightarrow M \otimes A$. In this way, the category $\smod_G$ of rational $G$-super\-modules is equivalent to the category $\scomod_{k[G]}$ of right $k[G]$-supercomodules \cite[Corollary 9.1.5]{Westra:2009}. All $G$-super\-mod\-ules considered in this paper will be rational supermodules. The underlying even sub\-category of $\smod_G$, which has as morphisms only the even $G$-super\-module homomorphisms, is denoted $\fsmod_G$. Then $\Hom_{\fsmod_G}(M,N) = \Hom_G(M,N)_{\ol{0}}$.

\begin{remark} \label{remark:evensubmodule}
If $G$ is an ordinary affine group scheme, considered also as a supergroup scheme with $G = \Gev$, and if $M$ is a rational $G$-supermodule, then $M_{\ol{0}}$ and $M_{\ol{1}}$ are $G$-submodules of $M$.
\end{remark}

Let $A \in \csalg_k$. By Yoneda's Lemma, there exists a bijection $\Mor(G_A,\A^{1|1}) \cong A[G_A]$. Here $\A^{1|1}$ is the $A$-superfunctor satisfying $\A^{1|1}(A') = (A')_{\ol{0}} \oplus (A')_{\ol{1}} = A'$ for each $A' \in \csalg_A$. Given $f \in k[G]$ and $a \in A$, the natural transformation $G_A \rightarrow \A^{1|1}$ corresponding to $f \otimes a \in k[G] \otimes A = A[G_A]$ maps $h \in G_A(A') = \Hom_{\salg_A}(A[G_A],A')$ to $h(f \otimes a) \in A'$. The \emph{right regular representation} $\rho_r$, the \emph{left regular representation} $\rho_l$, and the \emph{conjugation representation} $\rho_c$ of $G$ on $k[G]$ are now defined as follows. Let $A \in \csalg_k$, and let $g \in G(A)$. Given $A' \in \csalg_A$ with $A$-super\-algebra structure map $A \rightarrow A'$, write $g_{A'}$ for the image of $g$ in $G(A')$ under the homomorphism $G(A \rightarrow A')$. Then identifying $k[G] \otimes A$ with $\Mor(G_A,\A^{1|1})$ as above, the maps $\rho_r(g),\rho_l(g),\rho_c(g) \in \End_A(k[G] \otimes A)_{\ol{0}}$ are defined by requiring, for each $h \in G(A')$,
\begin{align}
\rho_r(g)(f \otimes a)(h) &= (f \otimes a)(h \cdot g_{A'}), \\
\rho_l(g)(f \otimes a)(h) &= (f \otimes a)(g_{A'}^{-1} \cdot h), \text{ and} \\
\rho_c(g)(f \otimes a)(h) &= (f \otimes a)(g_{A'}^{-1} \cdot h \cdot g_{A'}). \label{eq:rhoc}
\end{align}
The comodule structure maps for these three representations are defined by $\Delta_{\rho_r} = \Delta_G$, $\Delta_{\rho_l} = T \circ (\sigma_G \otimes \id_{k[G]}) \circ \Delta_G$, and $\Delta_{\rho_c} = (\id_{k[G]} \otimes m_{k[G]}) \circ (T \otimes \id_{k[G]}) \circ (\sigma_G \otimes \Delta_G) \circ \Delta_G$.

Using \eqref{eq:rhoc}, one can check that $I_\ve$ is a $G$-subsupermodule for the conjugation action of $G$ on $k[G]$, and that the multiplication and comultiplication maps on $k[G]$ are homomorphisms for the conjugation action. Then it follows for all $n \in \N$ that $I_\ve^{n+1}$ is a $G$-subsupermodule of $k[G]$, and hence that $\Dist_n(G) = (k[G]/I_\ve^{n+1})^*$ is naturally a $G$-supermodule. Each quotient map $k[G]/I_\ve^{n+1} \rightarrow k[G]/I_\ve^{n+2}$ is a $G$-supermodule homomorphism, hence so is each inclusion $\Dist_n(G) \subset \Dist_{n+1}(G)$. Then we obtain a $G$-supermodule structure on $\Dist(G)$, called the \emph{adjoint representation of $G$ on $\Dist(G)$}. The restriction of the adjoint representation to $\Lie(G) = \Dist_1^+(G) = (I_\ve/I_\ve^2)^*$ is called the adjoint representation of $G$ on $\Lie(G)$. Since $\Delta_G$ is a homomorphism for the conjugation action, the product on $\Dist(G)$ is a homomorphism for the adjoint action. Then it follows for each $A \in \csalg_k$ that $G(A)$ acts on the Lie superalgebra $\Lie(G) \otimes A$ via Lie super\-algebra automorphisms.

Let $H$ be an affine $k$-supergroup scheme. The fixed point functor $(-)^H: \smod_H \rightarrow \svec_k$ is defined for $M \in \smod_H$ by $M^H = \set{m \in M: \Delta_M(m) = m \otimes 1}$. Then
\[
M^H = \{m \in M: g.(m \otimes 1) = m \otimes 1 \text{ for all $g \in G(A)$ and all $A \in \csalg_k$}\}.
\]
Let $G$ be an affine supergroup scheme, and suppose that $H$ is a sub\-super\-group scheme of $G$. Then $H$ acts diagonally on $M \otimes k[G]$ via the given action on $M$ and the left regular representation on $k[G]$. Now the induction functor $\ind_H^G: \smod_H \rightarrow \smod_G$ is defined by $\ind_H^G(M) = (M \otimes k[G])^H$. Here $M \otimes k[G]$ is considered as a $G$-supermodule via the right regular representation of $G$ on $k[G]$. The induction functor is right adjoint to the restriction functor $\res_H^G: \smod_G \rightarrow \smod_H$ (cf.\ \cite[I.3.4]{Jantzen:2003}), hence is left exact and maps injective objects in $\smod_H$ to injective objects in $\smod_G$. Then one can prove that $\smod_G$ has enough injective objects by following the arguments for ordinary $k$-group schemes \cite[I.3.9]{Jantzen:2003}. If $Q$ is injective in $\smod_G$, then it is also injective in the subcategory $\fsmod_G$, so $\fsmod_G$ has enough injectives as well.

Suppose $G$ is a finite supergroup scheme. Then $\smod_G$ is equivalent to the category of left $k[G]^*$-supermodules. Given $M \in \smod_G$, the action of $\mu \in k[G]^*$ on $m \in M$ is defined by $\mu.m = (1 \otimes \mu) \circ \Delta_M(m)$. Conversely, if $M$ is a left $k[G]^*$-supermodule and if $A \in \csalg_k$, then $M \otimes A$ is naturally a left $(k[G]^* \otimes A)$-supermodule. Since $G(A) = \Hom_{\salg}(k[G],A)$ is a subset of $\Hom_k(k[G],A) \cong A \otimes k[G]^* \cong k[G]^* \otimes A$, we obtain an action of $G(A)$ on $M \otimes A$ for each $A \in \csalg_k$. In general, if $G$ is an arbitrary affine supergroup scheme and $M \in \smod_G$, the formula $\mu.m = (1 \otimes \mu) \circ \Delta_M(m)$ defines an action of $\Dist(G)$ on $M$, making $M$ into a $\Dist(G)$-supermodule. In particular, $M$ becomes a supermodule for $\Lie(G)$.

\subsection{Frobenius kernels} \label{subsection:Frobeniuskernels}

Suppose $k$ is a perfect field of characteristic $p > 2$. Given a $k$-vector space $A$ and an integer $m \in \Z$, let $A^{(m)}$ be the $k$-vector space that is equal to $A$ as an abelian group, but in which the $k$-module structure has been twisted so that $b \in k$ acts on $A^{(m)}$ as $b^{p^{-m}}$ acts on $A$. Now let $A \in \csalg_k$, and let $r \in \N$. Then the map $\gamma_r(A): a \mapsto a^{p^r}$ defines a super\-algebra homomor\-phism $A \rightarrow A^{(-r)}$. Given a $k$-superfunctor $X$, define $X^{(r)}$ to be the $k$-superfunctor satisfying $X^{(r)}(A) = X(A^{(-r)})$ for each $A \in \csalg_k$, and define $F_X^r: X \rightarrow X^{(r)}$ by $F_X^r(A) = X(\gamma_r(A))$. Then $F_X^r$ is called the \emph{$r$-th Frobenius morphism on $X$}. If $r = 1$, then we write $F_X$ for $F_X^1$.

If $X = \Sspk R$, then $X^{(r)} = \Sspk (R^{(r)})$, and the comorphism $(F_X^r)^*: R^{(r)} \rightarrow R$ is the power map $f \mapsto f^{p^r}$. Since $a^2 = 0$ for each $a \in A_{\ol{1}}$ by the assumption $p \neq 2$, it follows that $\gamma_r(A): A \rightarrow A^{(-r)}$ has image in $(A_{\ol{0}})^{(-r)}$, and hence that $F_X^r$ has image in the subfunctor $(X_{\ev})^{(r)}$ of $X^{(r)}$. Here $X_{\ev}$ is the underlying purely even subfunctor of $X$, with $k[X_{\ev}] = k[X]/(k[X]k[X]_{\ol{1}})$ and $(X_{\ev})(A) = X(A_{\ol{0}})$ for each $A \in \csalg_k$. Then we can also consider $F_X^r$ as a morphism $F_X^r: X \rightarrow (X_{\ev})^{(r)}$. The comorphism $(F_X^r)^*: k[X_{\ev}]^{(r)} \rightarrow k[X]$ is induced by the $p^r$-power map from $k[X]^{(r)}$ to $k[X]$.

Let $G$ be an affine algebraic $k$-supergroup functor. Then $F_G^r: G \rightarrow (\Gev)^{(r)}$ is a supergroup homomorphism, and the restriction of $F_G^r$ to $\Gev$ is the ordinary $r$-th Frobenius morphism on $\Gev$. Set $G_r = \ker(F_G^r)$. Then $G_r$ is a normal sub\-super\-group scheme of $G$, called the \emph{$r$-th Frobenius kernel of $G$}. Specifically, $G_r$ is the closed sub\-super\-group scheme of $G$ defined by the ideal $\sum_{f \in I_\ve} k[G] f^{p^r}$. Let $\Iev$ be the augmentation ideal of $k[\Gev]$. Applying \eqref{eq:Masuoka} and the fact that every $p$-th power in $\Lambda({\Lie(G)_{\ol{1}}}^*)$ is zero, it follows that $G_r$ is defined by the ideal $\sum_{f \in \Iev} k[G] f^{p^r}$, and hence that
\begin{equation} \label{eq:MasuokaGr}
\begin{split}
k[G_r] &= \textstyle k[G] / (\sum_{f \in \Iev} k[G] f^{p^r}) \\
&\cong \textstyle k[\Gev]/(\sum_{f \in \Iev} k[\Gev] f^{p^r}) \otimes \Lambda({\Lie(G)_{\ol{1}}}^*) \\
&= k[(\Gev)_r] \otimes \Lambda({\Lie(G)_{\ol{1}}}^*)
\end{split}
\end{equation}
as augmented super\-algebras over $k$. Here $(\Gev)_r$ denotes the $r$-th Frobenius kernel of the ordinary affine group scheme $\Gev$. In particular, $G_r$ is an infinitesimal supergroup scheme of height $r$.

\begin{lemma} \textup{(cf.\ \cite[Lemma 1.3]{Friedlander:1997})} \label{lemma:FSLemma1.3}
Let $G$ be an infinitesimal $k$-supergroup scheme, and let $r \in \N$. Then the following are equivalent:
\begin{enumerate}
\item $G$ is of height $\leq r$.
\item There exists a closed embedding $G \hookrightarrow GL(m|n)_r$ for some $m,n \in \N$.
\item For any closed embedding $G \hookrightarrow GL(m|n)$, the image of $G$ is contained in $GL(m|n)_r$.
\end{enumerate}
\end{lemma}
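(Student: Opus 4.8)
The plan is to deduce all three implications from a single observation describing how the height of $G$ is detected by \emph{any} closed embedding into a general linear supergroup. The first step I would take is to record that such an embedding exists at all, since this is needed to make $(3)$ non-vacuous and to run $(3)\Rightarrow(2)$. As $G$ is infinitesimal, hence finite, the Hopf superalgebra $k[G]$ is finite-dimensional, and the right regular representation makes $k[G]$ into a faithful rational $G$-supermodule. Setting $m=\dim_k k[G]_{\ol{0}}$ and $n=\dim_k k[G]_{\ol{1}}$, this gives a morphism $\rho\colon G\to GL(m|n)$ whose comorphism $\rho^*\colon k[GL(m|n)]\to k[G]$ sends the matrix coordinate functions to the structure constants $a_{ij}$ determined by $\Delta_G(e_j)=\sum_i e_i\otimes a_{ij}$ for a homogeneous basis $\{e_j\}$ of $k[G]$. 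The counit identity $e_j=\sum_i \ve_G(e_i)\,a_{ij}$ shows that the $a_{ij}$ span $k[G]$, so $\rho^*$ is surjective and $\rho$ is a closed embedding; this is the super analogue of the standard construction used in \cite{Friedlander:1997}, and the only point needing care is that the $\Z_2$-grading is respected throughout.

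The crux of the argument will be the following equivalence, which I would establish for an arbitrary closed embedding $\iota\colon G\hookrightarrow GL(m|n)$ with (necessarily surjective) comorphism $q=\iota^*\colon k[GL(m|n)]\twoheadrightarrow k[G]$: the image of $G$ lies in $GL(m|n)_r$ if and only if $G$ has height $\leq r$. Since $GL(m|n)_r$ is cut out by the Hopf superideal $J=\sum_{f\in I_\ve}k[GL(m|n)]\,f^{p^r}$, the image of $G$ lies in $GL(m|n)_r$ exactly when $q$ annihilates $J$. Using that $q$ is a surjective algebra map and that $q(g\,f^{p^r})=q(g)\,q(f)^{p^r}$, this holds if and only if $q(f)^{p^r}=0$ for every $f$ in the augmentation ideal of $k[GL(m|n)]$. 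Finally, because $q$ is a surjective homomorphism of Hopf superalgebras it carries the augmentation ideal of $k[GL(m|n)]$ onto the augmentation ideal $I_\ve$ of $k[G]$, so the condition becomes $y^{p^r}=0$ for all $y\in I_\ve$, which is precisely the definition of $G$ having height $\leq r$.

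With this equivalence in hand, the three implications follow immediately. For $(1)\Rightarrow(3)$: if $G$ has height $\leq r$, then for \emph{any} closed embedding the equivalence forces the image into $GL(m|n)_r$. For $(3)\Rightarrow(2)$: applying $(3)$ to the embedding $\rho$ produced in the first step shows $\rho$ factors through a closed embedding $G\hookrightarrow GL(m|n)_r$. For $(2)\Rightarrow(1)$: given a closed embedding $G\hookrightarrow GL(m|n)_r$, I would compose it with the closed embedding $GL(m|n)_r\hookrightarrow GL(m|n)$ to obtain a closed embedding of $G$ into $GL(m|n)$ whose image lies in $GL(m|n)_r$, and then invoke the equivalence in the reverse direction to conclude height $\leq r$.

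I expect the main obstacle to be the bookkeeping underlying the equivalence of the second paragraph, specifically verifying that $q$ restricts to a surjection from the augmentation ideal of $k[GL(m|n)]$ onto $I_\ve$ and that the defining ideal $J$ of the Frobenius kernel interacts correctly with $q$, together with confirming that the right regular representation genuinely yields a closed embedding in the super setting. Both are mild adaptations of the classical arguments of Friedlander and Suslin, and I do not anticipate any essentially new difficulty from the odd coordinate functions, since for $r\geq 1$ any odd $f$ automatically satisfies $f^{p^r}=0$ because $\chr(k)=p\neq 2$.
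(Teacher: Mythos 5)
Your proposal is correct and takes essentially the same route as the paper: both arguments turn on the fact that the surjective comorphism of a closed embedding carries the augmentation ideal of $k[GL(m|n)]$ (resp.\ of $k[GL(m|n)_r]$) onto the augmentation ideal of $k[G]$, so that the embedding factors through $GL(m|n)_r$ precisely when every element of $I_\ve \subset k[G]$ has vanishing $p^r$-th power, i.e., when $G$ has height $\leq r$. The only difference is cosmetic: where you construct the ambient closed embedding $G \hookrightarrow GL(m|n)$ by hand from the right regular representation, the paper simply cites \cite[Theorem 9.3.2]{Westra:2009} for its existence.
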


\begin{proof}
By \cite[Theorem 9.3.2]{Westra:2009}, there exist $m,n \in \N$ such that $G$ is a closed sub\-super\-group scheme of $GL(m|n)$. Then (3) implies (2). Given a closed embedding $G \hookrightarrow GL(m|n)_r$, the corresponding comorphism $k[GL(m|n)_r] \rightarrow k[G]$ is a surjective super\-algebra homomorphism. Since $f^{p^r} = 0$ in $k[GL(m|n)_r]$ for each element $f$ of the augmentation ideal of $k[GL(m|n)_r]$, the corresponding condition must also hold in $k[G]$, and hence $G$ must be of height $\leq r$. Then (2) implies (1). Now suppose $G$ is of height $\leq r$, and that $G \hookrightarrow GL(m|n)$ is a closed embedding. Then the surjective super\-algebra homomorphism $k[GL(m|n)] \rightarrow k[G]$ must factor through the quotient $k[GL(m|n)_r]$ of $k[GL(m|n)]$, which implies that the image of $G$ is contained in $GL(m|n)_r$. Then (1) implies (3).
\end{proof}

The sequence of inclusions $G_1 \subset G_2 \subset \cdots \subset G$ induces a corresponding sequence of inclusions of distribution algebras $\Dist(G_1) \subset \Dist(G_2) \subset \cdots \subset \Dist(G)$. Also, one has canonical identifications $\Lie(G_r) = \Lie(G)$ for each $r \geq 1$. Set $\g = \Lie(G)$, and write $\Vg$ for the restricted enveloping super\-algebra of $\g$. Then the inclusion $\g \subset \Dist(G_1)$ induces a homomorphism of Hopf super\-algebras $\gamma: \Vg \rightarrow \Dist(G_1)$. The next lemma is the characteristic $p$ analogue of \cite[Lemma 3.1]{Zubkov:2009}.

\begin{lemma} \label{lemma:VgDistG1}
The natural homomorphism $\gamma: \Vg \rightarrow \Dist(G_1)$ is an isomorphism.
\end{lemma}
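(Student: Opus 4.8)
The plan is to show that $\gamma$ is a homomorphism of finite-dimensional Hopf superalgebras whose source and target have equal dimension, and then to verify injectivity, from which bijectivity follows. First, recall that $\gamma$ arises by applying the universal property of the restricted enveloping superalgebra to the inclusion $\g = \Dist_1^+(G_1) \hookrightarrow \Dist(G_1)$: this inclusion is a homomorphism of restricted Lie superalgebras (it respects the supercommutator, and the restriction map on $\gzero$ is by definition induced by the $p$-power operation on $\Dist(G)$), so it extends uniquely to a homomorphism of cocommutative Hopf superalgebras $\gamma \colon \Vg \to \Dist(G_1)$. Since $G_1$ is finite, $\Dist(G_1) = k[G_1]^*$ is finite-dimensional, and $\Vg$ is finite-dimensional by the PBW theorem. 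Thus it suffices to compute the two dimensions and then to prove that $\gamma$ is injective.

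For the dimension count, the PBW theorem for restricted enveloping superalgebras gives $\dim_k \Vg = p^{\dim \gzero} \cdot 2^{\dim \gone}$, the factor $2^{\dim \gone}$ arising because an odd basis vector $y$ satisfies $y^2 = \tfrac{1}{2}[y,y] \in \gzero$ (here we use $p \neq 2$). On the other side, by \eqref{eq:MasuokaGr} we have $k[G_1] \cong k[(\Gev)_1] \otimes \Lambda({\gone}^*)$ as augmented superalgebras, so $\dim_k \Dist(G_1) = \dim_k k[G_1] = \dim_k k[(\Gev)_1] \cdot 2^{\dim \gone}$. By the classical theory of Frobenius kernels for ordinary group schemes (cf.\ \cite[I.9.6]{Jantzen:2003}), the analogous map identifies $\Dist((\Gev)_1)$ with $V(\Lie(\Gev))$, whence $\dim_k k[(\Gev)_1] = p^{\dim \Lie(\Gev)} = p^{\dim \gzero}$, using $\Lie(\Gev) \cong \gzero$. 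Combining these, $\dim_k \Dist(G_1) = p^{\dim \gzero} \cdot 2^{\dim \gone} = \dim_k \Vg$.

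To prove injectivity I would invoke the standard injectivity criterion for pointed irreducible (connected) coalgebras: a morphism out of such a coalgebra is injective provided it is injective on the first term of the coradical filtration, namely on $k \oplus P(\Vg)$ (cf.\ \cite[\S5.3]{Montgomery:1993}). The superalgebra $\Vg$ is connected as a supercoalgebra, and its space of primitives is exactly $\g$ (a standard fact for restricted enveloping superalgebras when $p > 2$). Since $\gamma$ restricts on $\g = P(\Vg)$ to the inclusion $\g \hookrightarrow \Dist(G_1)$, it is injective on $k \oplus \g$, hence injective; an injective homomorphism between finite-dimensional spaces of equal dimension is an isomorphism, completing the argument. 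The main obstacle is the dimension count for $\Dist(G_1)$: it is precisely here that one must import the classical identification $\Dist((\Gev)_1) \cong V(\Lie(\Gev))$ and marry it to the Masuoka-type splitting \eqref{eq:MasuokaGr} of $k[G_1]$. One should also confirm that $\gamma$ genuinely carries $\g$ onto the primitives of $\Dist(G_1)$, i.e.\ that $\Dist_1^+(G_1) = \Lie(G_1) = \g$, so that the injectivity criterion indeed applies.
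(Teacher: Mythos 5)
Your proposal is correct, and its first half (the dimension count) is identical to the paper's: both compute $\dim_k \Vg = p^{\dim \gzero}\cdot 2^{\dim \gone}$ from PBW and $\dim_k k[G_1] = p^{\dim\gzero}\cdot 2^{\dim\gone}$ from \eqref{eq:MasuokaGr} together with the classical fact $\dim_k k[(\Gev)_1] = p^{\dim\gzero}$ from \cite[I.9.6(4)]{Jantzen:2003}. The second half, however, is executed on the opposite side of the duality. You prove injectivity of $\gamma$ directly, using the Heyneman--Radford criterion (\cite[\S 5.3]{Montgomery:1993}): a coalgebra map out of a connected coalgebra is injective once it is injective on $k \oplus P(\Vg)$, together with the fact $P(\Vg) = \g$. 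The paper instead proves surjectivity of the transpose $\gamma^*\colon k[G_1] \rightarrow \Vg^*$: writing $A_\ve$, $B_\ve$ for the augmentation ideals, $\gamma^*$ induces an isomorphism $A_\ve/A_\ve^2 \rightarrow B_\ve/B_\ve^2$ (here Lemma \ref{lemma:dualfiltration} supplies $B_\ve/B_\ve^2 \cong \g^*$), hence carries generators of $A$ to generators of the finite-dimensional local algebra $B$. These are mirror-image arguments resting on the same underlying structure, namely that the coradical filtration of $\Vg$ begins $k \subset k \oplus \g$; the paper packages this as the cotangent-space statement of Lemma \ref{lemma:dualfiltration}, while you package it as $P(\Vg) = \g$. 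What the paper's route buys is self-containedness: Lemma \ref{lemma:dualfiltration} is already proved there, and surjectivity then needs only elementary Nakayama-type reasoning. Your route avoids dualizing but imports the Heyneman--Radford theorem and asserts $P(\Vg)=\g$ without proof; that assertion is true, but to close it you should note that it follows from the same coradical-filtration computation (the coradical filtration of $\Vg$ coincides with the monomial-length filtration, cf.\ \cite[\S 5.5]{Montgomery:1993}, so any primitive lies in $k \oplus \g$, forcing $P(\Vg) = \g$). One small correction: your closing worry that $\g$ must map \emph{onto} the primitives of $\Dist(G_1)$ is unnecessary; the criterion only requires injectivity on $k \oplus P(\Vg)$, and all you need from the target is the canonical identification $\Lie(G_1) = \Lie(G) = \g$ already built into the definition of $\gamma$.
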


\begin{proof}
Since $G_1$ is infinitesimal, $\Dist(G_1) = k[G_1]^*$. Set $A = k[G_1]$ and set $B = \Vg^*$. One has $\dim_k k[(\Gev)_1] = p^{\dim \g_{\ol{0}}}$ by \cite[I.9.6(4)]{Jantzen:2003}, and $\dim_k \Vg = p^{\dim \g_{\ol{0}}} \cdot 2^{\dim \g_{\ol{1}}}$, so it follows from \eqref{eq:MasuokaGr} that $\dim_k A = \dim_k B$. Then to show that $\gamma$ is an isomorphism, it suffices by dimension comparison to show that the transpose map $\gamma^*: A \rightarrow B$ is surjective. Write $A_\ve$ and $B_\ve$ for the augmentation ideals of $A$ and $B$, respectively. By construction, $\gamma$ restricts to an isomorphism from $\g \subset \Vg$ to the subspace $\Dist_1^+(G_1) \cong (A_\ve/A_\ve^2)^*$ of $\Dist(G_1)$, and by Lemma \ref{lemma:dualfiltration}, $B_\ve/B_\ve^2 \cong \g^*$. Then it follows that the transpose map $\gamma^*: A \rightarrow B$ induces an isomorphism $A_\ve/A_\ve^2 \stackrel{\sim}{\rightarrow} B_\ve/B_\ve^2$, and hence that $\gamma^*$ maps a set of generators for $A$ to a set of generators for $B$. In particular, $\gamma^*$ is surjective.
\end{proof}

\begin{remark}
Let $\g$ be an arbitrary finite-dimensional restricted Lie super\-algebra over $k$. Then by the equivalence of categories between finite-dimensional cocommutative Hopf superalgebras over $k$ and finite $k$-supergroup schemes, there exists a finite $k$-supergroup scheme $G$ with $k[G] = \Vg^*$. It follows from Lemma \ref{lemma:dualfiltration} that $\Lie(G) \cong \g$. Then by Lemma \ref{lemma:VgDistG1}, $\Vg \cong \Dist(G_1) \subset k[G]^*$, so by dimension comparison one has $\Dist(G_1) = k[G]^*$. Equivalently, $k[G_1]^* = k[G]^*$, so $k[G] = k[G_1]$, and hence $G = G_1$. Then under the aforementioned equivalence of categories, restricted enveloping super\-algebras correspond to height one infinitesimal supergroup schemes, and vice versa.
\end{remark}

The following lemma is the supergroup analogue of \cite[I.9.5]{Jantzen:2003}; cf.\ also \cite[Lemma 8.2]{Zubkov:2009}.

\begin{lemma} \label{lemma:G/Gr}
Suppose $\Gev$ is a reduced algebraic $k$-group scheme. Then $F_G^r$ induces isomorphisms $G/G_r \cong (\Gev)^{(r)}$ and $G_{r'}/G_r \cong [(\Gev)^{(r)}]_{r'-r}$ for all $r' \geq r$.\footnote{The quotient $G/G_r$ here is the object denoted $G\tilde{\tilde{/}}G_r$ in \cite{Zubkov:2009}.}
\end{lemma}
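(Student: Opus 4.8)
The plan is to prove that the Frobenius morphism $F_G^r \colon G \to (\Gev)^{(r)}$ is faithfully flat with kernel exactly $G_r$; the two asserted isomorphisms will then follow from the theory of quotients of affine supergroup schemes developed in \cite{Zubkov:2009}, together with the behaviour of Frobenius kernels under composition of Frobenius morphisms. Since $G_r = \ker(F_G^r)$ by construction, the entire content lies in the faithful flatness of $F_G^r$ and in the resulting quotient statements.

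First I would analyze the comorphism $(F_G^r)^* \colon k[\Gev]^{(r)} \to k[G]$. Using the isomorphism of augmented superalgebras $k[G] \cong k[\Gev] \otimes \Lambda(\Lie(G)_{\ol{1}}^{*})$ from \eqref{eq:Masuoka}, I claim $(F_G^r)^*$ factors as the ordinary Frobenius comorphism $(F_{\Gev}^r)^* \colon k[\Gev]^{(r)} \to k[\Gev]$ followed by the inclusion $k[\Gev] = k[\Gev] \otimes 1 \hookrightarrow k[G]$. Indeed, $(F_G^r)^*$ is induced by the $p^r$-power map on $k[G]$, which is a ring endomorphism of the commutative ring $k[G]_{\ol{0}}$ because $\chr(k) = p > 2$. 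Under \eqref{eq:Masuoka} the even part of the ideal $k[G]k[G]_{\ol{1}}$ is $k[\Gev] \otimes \mathfrak{m}$, where $\mathfrak{m}$ is the augmentation ideal of $\Lambda(\Lie(G)_{\ol{1}}^{*})_{\ol{0}}$; since $\mathfrak{m}$ is generated by the square-zero elements $e_ie_j$, each of which the $p$-power map sends to $(e_ie_j)^p = 0$, the $p^r$-power map annihilates this ideal. Consequently $(F_G^r)^*$ is well defined on lifts, its image lies in $(k[\Gev])^{p^r} \otimes 1$, and the claimed factorization holds.

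With the factorization in hand, faithful flatness is a matter of transitivity. Because $\Gev$ is reduced and $k$ is perfect, the ordinary result \cite[I.9.5]{Jantzen:2003}, applied to the purely even group scheme $\Gev$, shows that $k[\Gev]$ is faithfully flat over $k[\Gev]^{(r)}$ via $(F_{\Gev}^r)^*$. On the other hand, \eqref{eq:Masuoka} exhibits $k[G]$ as a free $k[\Gev]$-module of finite rank $2^{\dim \Lie(G)_{\ol{1}}}$, hence faithfully flat over $k[\Gev]$. Composing, $k[G]$ is faithfully flat over $k[\Gev]^{(r)}$ via $(F_G^r)^*$, so $F_G^r$ is a faithfully flat homomorphism; since its kernel is $G_r$, the quotient theory yields $G/G_r \cong (\Gev)^{(r)}$.

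For the second isomorphism I would use that $(\Gev)^{(r)}$ is an ordinary purely even group scheme, so that $F_G^{r'} = F_{(\Gev)^{(r)}}^{\,r'-r} \circ F_G^r$ for all $r' \geq r$, and that the Frobenius kernels of $(\Gev)^{(r)}$ are the subgroups $[(\Gev)^{(r)}]_{r'-r}$. Taking kernels gives $G_{r'} = (F_G^r)^{-1}\big([(\Gev)^{(r)}]_{r'-r}\big)$, which contains $G_r$. Since $F_G^r$ is faithfully flat with kernel $G_r$, the correspondence between closed subgroups of $G$ containing $G_r$ and closed subgroups of $G/G_r \cong (\Gev)^{(r)}$ then identifies $G_{r'}/G_r$ with $[(\Gev)^{(r)}]_{r'-r}$. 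The main obstacle is foundational rather than computational: one must have in place a sufficiently robust theory of faithfully flat quotients of affine supergroup schemes — the existence of $G/G_r$, the fact that a faithfully flat homomorphism realizes its target as the quotient by its kernel, and the subgroup correspondence — which is precisely what the references to \cite{Zubkov:2009} are meant to supply.
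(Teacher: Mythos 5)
Your proof is correct, but it reaches the first isomorphism by a genuinely different route than the paper. The paper applies \cite[Theorem 6.1]{Zubkov:2009} directly: $G/G_r = G/\ker(F_G^r)$ is isomorphic to the closed subsupergroup of $(\Gev)^{(r)}$ cut out by $\ker((F_G^r)^*)$, and reducedness of $\Gev$ enters only through the one-line observation that $f^{p^r} = 0$ in $k[G]$ forces $f$ to be nilpotent, hence zero, in $k[\Gev]$ --- so the comorphism is injective and the image is all of $(\Gev)^{(r)}$. You instead prove faithful flatness of $(F_G^r)^*$ by factoring it through $k[\Gev]$, quoting \cite[I.9.5]{Jantzen:2003} for the reduced purely even group and the finite freeness of $k[G]$ over $k[\Gev]$ supplied by \eqref{eq:Masuoka}; this is a legitimate alternative that reduces the super statement to the classical one, and the flatness you establish is in fact the substance that Zubkov's Theorem 6.1 (via super analogues of Takeuchi-type faithful flatness results) packages for the paper. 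Note, however, that your appeal to the principle that a faithfully flat homomorphism realizes its target as the quotient by its kernel, in the sheaf-theoretic setting appropriate to supergroups, is precisely the kind of statement you must still extract from \cite{Zubkov:2009}; your route does not avoid the foundational input, it only shifts which theorem carries the weight. One small caveat: \eqref{eq:Masuoka} is an isomorphism of augmented superalgebras, so the induced embedding $k[\Gev] \hookrightarrow k[G]$ need not be a section of the canonical projection $k[G] \twoheadrightarrow k[\Gev]$; your factorization of $(F_G^r)^*$ therefore holds only up to precomposition with an algebra automorphism of $k[\Gev]^{(r)}$, which is harmless for the faithful flatness conclusion but should be acknowledged. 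For the second isomorphism your argument and the paper's coincide, both resting on the subgroup correspondence of \cite[Corollary 6.2]{Zubkov:2009}; your identification of $G_{r'}$ as $(F_G^r)^{-1}\bigl([(\Gev)^{(r)}]_{r'-r}\bigr)$ via the factorization $F_G^{r'} = F_{(\Gev)^{(r)}}^{r'-r} \circ F_G^r$ is a clean way of carrying out the step the paper dismisses with ``one can check.''
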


\begin{proof}
Considering $F_G^r$ as a morphism $G \rightarrow (\Gev)^{(r)}$, $G/G_r$ is isomorphic by \cite[Theorem 6.1]{Zubkov:2009} to the sub\-super\-group of $(\Gev)^{(r)}$ defined by the kernel of the comorphism $(F_G^r)^*: k[\Gev]^{(r)} \rightarrow k[G]$. Since $\Gev$ is reduced, $(F_G^r)^*$ has trivial kernel, and hence $G/G_r \cong (\Gev)^{(r)}$. Next, one can check that the preimage in $G$ of $[(\Gev)^{(r)}]_{r'-r}$, in the sense of \cite[\S6]{Zubkov:2009}, is equal to $G_{r'}$. Then $G_{r'}/G_r \cong [(\Gev)^{(r)}]_{r'-r}$ by \cite[Corollary 6.2]{Zubkov:2009}.
\end{proof}

\begin{remark} \label{remark:SWfactorization}
Suppose $\Gev$ is a reduced algebraic $k$-group scheme. The above proof shows that $F_G: G \rightarrow (\Gev)^{(1)}$ is an epimorphism. Since $F_G$ restricts to the ordinary Frobenius morphism on $\Gev$, the maps $F_G|_{\Gev}: \Gev \rightarrow (\Gev)^{(1)}$ and $F_G|_{(\Gev)_r}: (\Gev)_r \rightarrow [(\Gev)^{(1)}]_{r-1}$ are also epimorphisms by \cite[I.9.5]{Jantzen:2003}. Now suppose that $\Gev$ is defined over $\Fp$. Then $(\Gev)^{(1)}$ identifies with $\Gev$, and the previous observations, combined with the first two paragraphs of \cite[\S7]{Zubkov:2009}, imply that there exist factorizations $G = \Gev G_1$ and $G_r = (\Gev)_r G_1$. The existence of such factorizations is asserted in \cite[Lemma 2.6]{Shu:2008} without the assumption that $\Gev$ is reduced, though the proof given there is incorrect, since the Frobenius morphism $F_G|_{\Gev}: \Gev \rightarrow \Gev$ need not be an epimorphism if $\Gev$ is not reduced. Indeed, if $\Gev = (\Gev)_1$, then $F_G|_{\Gev}$ is the trivial morphism.
\end{remark}

Let $M$ be a rational $\Gev$-module with comodule structure map $\Delta_M$. Then the twisted vector space $M^{(r)}$ is naturally a rational $(\Gev)^{(r)}$-module with comodule structure map ${\Delta_M}^{(r)}: M^{(r)} \rightarrow (M \otimes k[\Gev])^{(r)} \cong M^{(r)} \otimes k[\Gev]^{(r)}$, and the composition $(\id_{M^{(r)}} \otimes (F_G^r)^*) \circ {\Delta_M}^{(r)}: M^{(r)} \rightarrow M^{(r)} \otimes k[G]$ defines on $M^{(r)}$ the structure of a $G$-supermodule, called the \emph{$r$-th Frobenius twist of the rational $\Gev$-module $M$}. Given $m \in M$, write $m^{(r)}$ to denote $m$ considered as an element of $M^{(r)}$. Now if $A \in \csalg_k$ and $g \in G(A) = \Hom_{\salg}(k[G],A)$, and if $m \in M$ satisfies $\Delta_M(m) = \sum_i m_i \otimes f_i$, then the action of $g$ on $m^{(r)} \otimes 1 \in M^{(r)} \otimes A$ is given by $g.(m^{(r)} \otimes 1) = \sum_i m_i^{(r)} \otimes g(f_i^{p^r})$. Observe that the expression $g(f_i^{p^r})$ makes sense even though $f \in k[\Gev]$, because the $p^r$-power map on $k[G]$ factors through the quotient $k[\Gev] = k[G]/(k[G]k[G]_{\ol{1}})$.

\section{Cohomology for supergroup schemes} \label{section:cohomologysupergroups}

\subsection{Generalities}

Continue to assume that $G$ is an affine supergroup scheme. Since $\fsmod_G$ is an abelian category containing enough injectives, we can apply the machinery of homological algebra to define cohomology groups in $\fsmod_G$. Specifically, given $M \in \fsmod_G$, $\Ext_G^n(M,-)$ is the $n$-th right derived functor of $\Hom_G(M,-): \fsmod_G \rightarrow \fsvec_k$, and $\opH^n(G,-)$ is the $n$-th right derived functor of $(-)^G: \fsmod_G \rightarrow \fsvec_k$. Then $\opH^n(G,-) \cong \Ext_G^n(k,-)$. Reasoning as in \cite[I.4.14--I.4.16]{Jantzen:2003}, $\Hbul(G,M)$ can be computed as the cohomology of the Hochschild complex $\Cbul(G,M) = M \otimes k[G]^{\otimes \bullet}$. The differential $\partial^n: C^n(G,M) \rightarrow C^{n+1}(G,M)$ is defined by
\begin{equation}
\begin{split}
\partial^n(m \otimes f_1 \otimes \cdots \otimes f_n) &= \Delta_M(m) \otimes f_1 \otimes \cdots \otimes f_n + (-1)^{n+1} m \otimes f_1 \otimes \cdots \otimes f_n \otimes 1 \\
&\phantom{=} \textstyle + \sum_{i=1}^n (-1)^i m \otimes f_1 \otimes \cdots \otimes \Delta_G(f_i) \otimes \cdots \otimes f_n.
\end{split}
\end{equation}

\begin{remark} \label{remark:Hochschildcobar}
Let $G$ be a finite supergroup scheme. Then $C^\bullet(G,M)$ identifies under the equivalence of categories between $G$- and $k[G]^*$-supermodules with the cobar complex $C^\bullet(k[G]^*,M)$, and $\Hbul(G,M)$ identifies with the cohomology group $\Hbul(k[G]^*,M)$; cf.\ \eqref{eq:coproductdifferential}. In particular, $\Hbul(G,k)$ is a graded-commutative graded superalgebra by Remark \ref{remark:braidedgraded}.
\end{remark}

As in Section \ref{subsection:barcomplex}, the right cup product action of $C^\bullet(G,k)$ on $C^\bullet(G,M)$ is induced by right concatenation. Passing to cohomology, this yields the cup product action of $\Hbul(G,k)$ on $\Hbul(G,M)$. If $G'$ is a supergroup scheme acting by automorphisms on $G$, then $k[G]$ is a $G'$-supermodule. If $M$ is a $G \rtimes G'$-supermodule, then the action of $G'$ on $k[G]$ induces an action of $G'$ on $\Cbul(G,M)$, which passes to an action of $G'$ on $\Hbul(G,M)$. In particular, if $N$ is normal in $G$, then the conjugation action of $G$ on $N$ induces for each $M \in \fsmod_G$ a $G/N$-supermodule structure on $\Hbul(N,M)$, and there exists a Lyndon--Hochschild--Serre (LHS) spectral sequence (cf.\ \cite[\S3]{Scala:2012}):
\begin{equation} \label{eq:LHSspecseq}
E_2^{i,j}(M) = \opH^i(G/N,\opH^j(N,M)) \Rightarrow \opH^{i+j}(G,M).
\end{equation}

The LHS spectral sequence can be constructed as in \cite[Remark I.6.6]{Jantzen:2003} as the spectral sequence associated to a double complex. Specifically, consider $M$ as a complex concentrated in degree $0$, and consider $M \otimes \Cbul(G,k[G]) \otimes \Cbul(G/N,k[G/N])$ as the tensor product of complexes. Then \eqref{eq:LHSspecseq} is the spectral sequence associated to the double complex
\begin{align*}
C^{i,j}(M) &= [M \otimes C^j(G,k[G]) \otimes C^i(G/N,k[G/N])]^G \\
&= [(M \otimes k[G]^{\otimes (j+1)})^N \otimes k[G/N]^{\otimes (i+1)}]^{G/N}
\end{align*}
that is obtained by first computing cohomology along columns, and then computing cohomology along rows. Here $C^n(G,k[G]) = k[G]^{\otimes (n+1)}$ is considered as a $G$-super\-module via the left regular representation of $G$ on the first factor in $k[G]^{\otimes (n+1)}$. Similarly, $C^n(G/N,k[G/N])$ is a $G/N$-super\-module, hence a $G$-supermodule via the quotient morphism $G \rightarrow G/N$. The above construction allows one to deduce, exactly as for ordinary group schemes, that there exists a product structure on \eqref{eq:LHSspecseq} such that if $M,M' \in \fsmod_G$, then the product $E_2^{i,j}(M) \otimes E_2^{r,s}(M') \rightarrow E_2^{i+r,j+s}(M \otimes M')$ identifies with the ordinary cup product
\[
\opH^i(G/N,\opH^j(N,M)) \otimes \opH^r(G/N,\opH^s(N,M')) \rightarrow \opH^{i+r}(G/N,\opH^{j+s}(N,M \otimes M')
\]
multiplied by the scalar factor $(-1)^{j \cdot r}$. One can also deduce that the horizontal edge map $E_2^{i,0}(M) \rightarrow E_\infty^{i,0}$ identifies with the inflation map $\opH^i(G/N,M^N) \rightarrow \opH^i(G,M)$ arising from the quotient $G \rightarrow G/N$, and that the vertical edge map $\opH^i(G,M) \rightarrow E_2^{0,i}(M)$ identifies with the restriction map $\opH^i(G,M) \rightarrow \opH^i(N,M)$ induced by the inclusion $N \subset G$.

\subsection{The May spectral sequence for \texorpdfstring{$G_1$}{G1}} \label{subsection:MayspecseqG1}

In this section we discuss the analogue for $G_1$ of the spectral sequence constructed in Section \ref{subsection:Mayspecseq}. In fact, applying Remark \ref{remark:Hochschildcobar}, Lemma \ref{lemma:VgDistG1}, and \eqref{eq:Vcobarfiltration}, the spectral sequence discussed here identifies with the one constructed in Section \ref{subsection:Mayspecseq}.

\begin{proposition} \label{proposition:G1specseqE0}
Let $k$ be a perfect field of characteristic $p > 2$, let $G$ be an affine algebraic $k$-super\-group scheme, and let $M$ be a $G$-supermodule. Set $\g = \Lie(G)$. Then there exists a spectral sequence of $G$-supermodules converging to $\Hbul(G_1,M)$ with
\begin{equation} \label{eq:G1specseqE0M}
E_0^{i,j}(M) \cong M \otimes \Lambda_s^{j-i}(\g^*) \otimes S^i({\g_{\ol{0}}}^*)^{(1)}.
\end{equation}
Moreover, $E_r(k)$ is a spectral sequences of $G$-supermodule algebras, and $E_r(M)$ is a spectral sequence of right modules over $E_r(k)$.
\end{proposition}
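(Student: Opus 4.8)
The plan is to recognize that this spectral sequence is nothing other than the (reindexed) May spectral sequence of Section~\ref{subsection:Mayspecseq}, now carrying a rational $G$-supermodule structure throughout. Since $G$ is algebraic and $k$ is perfect, $\g = \Lie(G)$ is a finite-dimensional restricted Lie superalgebra and $G_1$ is a finite (height-one infinitesimal) supergroup scheme. First I would use Lemma~\ref{lemma:VgDistG1} together with the finiteness of $G_1$ to identify $\Vg \cong \Dist(G_1) = k[G_1]^*$, and then invoke Remark~\ref{remark:Hochschildcobar} to identify $\Hbul(G_1,M) \cong \Hbul(\Vg,M)$ and the Hochschild complex $\Cbul(G_1,M)$ with the cobar complex $\Cbul(\Vg,M) \cong M \otimes (\Vg^*)^{\otimes \bullet}$. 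Thus the spectral sequence $E_r(M) \Rightarrow \Hbul(\Vg,M)$ of Section~\ref{subsection:Mayspecseq}, reindexed as in \eqref{eq:E0V(L)specseq}, already converges to $\Hbul(G_1,M)$ and has the correct underlying $E_0$-page $M \otimes \Lambda_s^{j-i}(\g^*) \otimes S^i({\g_{\ol{0}}}^*)$; what remains is to supply the $G$-action and to account for the Frobenius twist on the polynomial factor.

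Next I would establish the $G$-equivariance of the whole construction. Since $G_1$ is normal in $G$, conjugation makes $k[G_1]$, and hence $\Dist(G_1) \cong \Vg$ via the adjoint action, into a $G$-supermodule algebra, and makes $\Cbul(\Vg,M) \cong M \otimes (\Vg^*)^{\otimes \bullet}$ into a complex of rational $G$-supermodules with $G$-equivariant differential. Because the diagonal approximation and the cup product are built from the $G$-equivariant Hopf structure maps on $\Vg$, the cup-product action of $\Cbul(\Vg,k)$ on $\Cbul(\Vg,M)$ is likewise $G$-equivariant. The key point is that the filtration is $G$-stable: by Lemma~\ref{lemma:dualfiltration} the monomial-length filtration coincides with the order filtration $F_i \Vg = \Dist_i(G_1)$, and each $\Dist_i(G_1)$ is a $G$-subsupermodule for the adjoint action by the discussion in Section~\ref{subsection:representations}. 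Hence the induced filtration \eqref{eq:Vcobarfiltration} on the cobar complex is by $G$-subsupermodules, so the spectral sequence of the filtered complex is a spectral sequence of rational $G$-supermodules with $G$-equivariant differentials; multiplicativity of the filtration under cup products then yields the asserted $G$-supermodule algebra structure on $E_r(k)$ and the right $E_r(k)$-module structure on $E_r(M)$.

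Finally I would identify the $E_0$-page as a $G$-supermodule, which is the heart of the matter. The factor $\Lambda_s(\g^*)$ arises from $\gr_1 \Vg^* \cong \g^*$ equipped with the coadjoint $G$-action dual to the adjoint action of $G$ on $\g$, so it contributes $\Lambda_s^{j-i}(\g^*)$ with its natural action. The polynomial factor is generated, as in the proof of Proposition~\ref{proposition:E1page}, by the classes $\betabar(f)$ for $f \in ({\g_{\ol{0}}}^*)$, where $\beta$ is the operation \eqref{eq:beta}. By naturality, $\beta$ is $G$-equivariant and additive on $\ker(\partial^1)_{\ol{0}}$, but it is $p$-semilinear over $k$, and a $G$-equivariant, additive map that is $p$-semilinear over scalars factors through a genuine $G$-equivariant linear map out of the Frobenius twist of its source. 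Consequently the generating space $\betabar((\gr_1 \Vg^*)_{\ol{0}})$ is isomorphic as a rational $G$-supermodule to $({\g_{\ol{0}}}^*)^{(1)}$, which upgrades the polynomial factor to $S^i({\g_{\ol{0}}}^*)^{(1)}$. I expect this semilinearity-to-twist identification, together with checking that the $\beta$-classes assemble $G$-equivariantly across the relevant bidegrees, to be the main obstacle; the argument parallels the appearance of the Frobenius twist in Jantzen's treatment of the May spectral sequence for Frobenius kernels of ordinary group schemes \cite[I.9.16--I.9.20]{Jantzen:2003}.
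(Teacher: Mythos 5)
Your overall architecture coincides with the paper's own proof: identify the filtered Hochschild complex of $G_1$ with the filtered cobar complex of $\Vg$ via Lemma~\ref{lemma:VgDistG1} and Remark~\ref{remark:Hochschildcobar}, obtain $G$-stability of the filtration from the conjugation/adjoint action, and produce the Frobenius twist on the polynomial factor from the semilinear operator $\beta$ of \eqref{eq:beta}. However, there is a genuine gap at the decisive step, namely the claim that ``by naturality, $\beta$ is $G$-equivariant.'' No naturality argument is available here: $\beta$ is a degree-$p$ polynomial operation, not a morphism of supermodules, so rational $G$-equivariance must be verified on $A$-points for every $A \in \csalg_k$, after extending $\beta$ to an operator $\beta_A$ on the base-changed complex $\Cbul(G_{1,A},A)$; and the required identity is the twisted formula \eqref{eq:Gactiongoal}, which holds only modulo $W \otimes A$, i.e.\ modulo coboundaries and higher filtration. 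Establishing exactly this formula occupies the entire second half of the paper's proof, so the proposal defers precisely the content of the proposition rather than proving it.

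Two concrete points must be checked that ``naturality'' cannot supply. First, ${\gzero}^*$ is \emph{not} a $G$-subsupermodule of $\g^*$: for $g \in G(A)$, the expansion $g.(f_i \otimes 1) = \sum_{s=1}^m f_s \otimes a_{is}$ of an even generator necessarily involves the odd basis elements $f_s$ paired with odd coefficients $a_{is} \in A_{\ol{1}}$ (cf.\ \eqref{eq:adjointaction}). Hence it is not even clear a priori that the image of $\betabar$ is $G$-stable; the paper shows the odd cross terms die because $\beta_A(f_s \otimes a_{is}) = 0$ whenever $\ol{f_s} = \ol{1}$, which rests on odd elements squaring to zero and on $p > 2$. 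This is a genuinely super phenomenon with no counterpart in the classical argument of \cite[I.9.16--I.9.20]{Jantzen:2003} that you cite as a model. Second, splitting $\beta_A\bigl(\sum_s f_s \otimes a_{is}\bigr)$ into $\sum_s \beta_A(f_s \otimes a_{is})$ requires additivity of $\beta_A$, which by the base-changed generalization of \eqref{eq:betaadditive} holds only modulo $F^{p+1}C^2(G_{1,A},A) + \partial_A(F^p C^1(G_{1,A},A)) \cong W \otimes A$, and only for even homogeneous arguments; this too is a computation, not a formal consequence. Once these points are proved, together with the identity $\beta_A(f_s \otimes a_{is}) = \beta(f_s) \otimes g(f_{is})^p$ for even $f_s$ (via \eqref{eq:VWbimodiso}), your semilinearity-to-twist principle does correctly finish the identification $E_0^{1,1}(k) \cong ({\gzero}^*)^{(1)}$; but as written, the key equivariance is asserted where it needs to be computed.
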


\begin{proof}
The spectral sequence is constructed as in \cite[I.9.12]{Jantzen:2003}. Specifically, write $I_\ve$ for the augmentation ideal of $k[G_1]$. The powers of $I_\ve$ define a multiplicative filtration on $k[G_1]$, and this leads to a filtration of the Hochschild complex $\Cbul(G_1,M)$, defined by replacing $V$ by $G_1$ in \eqref{eq:Vcobarfiltration}. Then the desired spectral sequence $E_r(M) \Rightarrow \Hbul(G_1,M)$ is the one arising via \cite[Theorem 2.6]{McCleary:2001} from this filtration on $\Cbul(G_1,M)$. Since $I_\ve$ is a $G$-subsupermodule for the conjugation action of $G$ on $k[G_1]$, the filtration on $\Cbul(G_1,M)$ is a filtration by $G$-subsupermodules, which implies that $E_r(M)$ is a spectral sequence of $G$-supermodules, and that $E_r(k)$ is a spectral sequence of $G$-supermodule algebras. Now by Remark \ref{remark:Hochschildcobar}, Lemma \ref{lemma:VgDistG1}, and \eqref{eq:Vcobarfiltration}, the spectral sequence $E_r(M) \Rightarrow \Hbul(G_1,M)$  identifies with that constructed in Section \ref{subsection:Mayspecseq} in the case $L = \g$. In particular, by the discussion preceding \eqref{eq:E0V(L)specseq}, the spectral sequence can be reindexed so that $E_0^{i,j}(M)$ is as described in \eqref{eq:G1specseqE0M}. Now to finish the proof, it remains to show that \eqref{eq:G1specseqE0M} is an isomorphism of $G$-supermodules.

Since the $G$-supermodule structure is compatible with the algebra structure on $E_0(k)$ and with the $E_0(k)$-module structure of $E_0(M)$, it suffices to show that the identifications $E_0^{0,0}(M) \cong M$, $E_0^{0,1}(k) \cong \g^*$, and $E_0^{1,1}(k) \cong ({\g_{\ol{0}}}^*)^{(1)}$ are $G$-supermodule isomorphisms. The argument proceeds as in \cite[I.9.18, I.9.20]{Jantzen:2003}. First, $E_0^{0,0}(M)$ identifies as a $G$-super\-module with
\[
F^0 C^0(G_1,M)/F^1 C^0(G_1,M) = F^0 C^0(G_1,M) \cong M.
\]
Next, $E_0^{0,1}(k)$ identifies as a $G$-supermodule with
\[
F^1 C^1(G_1,k)/F^2 C^1(G_1,k) = I_\ve/I_\ve^2 \cong \g^*.
\]
Finally, $E_0^{1,1}(k)$ identifies with a $G$-subsupermodule of
\begin{equation} \label{eq:E011k}
F^p C^2(G_1,k)/(F^{p+1} C^2(G_1,k) + \partial(F^p C^1(G_1,k))).
\end{equation}
Set $Z = F^p C^2(G_1,k)$, and set $W = F^{p+1} C^2(G_1,k) + \partial(F^p C^1(G_1,k))$. Choose $f_1,\ldots,f_n \in (I_\ve)_{\ol{0}}$ such that $\{f_1 + I_\ve^2,\ldots,f_n+I_\ve^2\}$ forms a basis for $(I_\ve/I_\ve^2)_{\ol{0}}$. Then by the proof of Proposition \ref{proposition:E1page}, the cosets of $\beta(f_1),\ldots,\beta(f_n)$ in $Z/W$ form a basis for $E_0^{1,1}(k)$. The function $f_i + I_\ve^2 \mapsto \beta(f_i) + W$ extends to a linear map $\betabar: (I_\ve/I_\ve^2)_{\ol{0}} \rightarrow Z/W$. Since $\beta(\lambda f) = \lambda^p \beta(f)$ for all $\lambda \in k$, we can consider $\betabar$ as a linear map with image isomorphic to $({\gzero}^*)^{(1)}$. Then we wish to show that the image of $\betabar$ is isomorphic as a $G$-supermodule to $({\gzero}^*)^{(1)}$.

Extend the set $\set{f_1,\ldots,f_n}$ to a set $\set{f_1,\ldots,f_m}$ of homogeneous elements in $I_\ve$ such that $\{f_1+I_\ve^2,\ldots,f_m+I_\ve^2\}$ forms a basis for $I_\ve/I_\ve^2$. Let $\Delta: k[G_1] \rightarrow k[G_1] \otimes k[G]$ be the comodule structure map for the conjugation action of $G$ on $G_1$. Then there exist $f_{ij} \in k[G]$ with $\ol{f_j} + \ol{f_{ij}} = \ol{f_i}$ such that $\Delta(f_i) \in (\sum_{j=1}^m f_j \otimes f_{ij})+ I_\ve^2 \otimes k[G]$. Let $A \in \csalg_k$, and let $g \in G(A) = \Hom_{\salg}(k[G],A)$. We identify $Z/W \otimes A$ with $(Z \otimes A)/(W \otimes A)$, and similarly we identify $\g^* \otimes A \cong (I_\ve/I_\ve) \otimes A$ with $(I_\ve \otimes A)/(I_\ve^2 \otimes A)$. Then the action of $g$ on $f_i \otimes 1 + I_\ve^2\otimes A \in \g^* \otimes A$ is given by
\begin{equation} \label{eq:adjointaction}
\textstyle g.(f_i \otimes 1 + I_\ve^2\otimes A) = (\sum_{j=1}^m f_j \otimes g(f_{ij})) + I_\ve^2 \otimes A.
\end{equation}
It is convenient to consider $\Gev(A)$ as consisting of those $g \in G(A)$ that vanish on $k[G]_{\ol{1}}$. Then given $g \in \Gev(A)$ and $1 \leq i \leq n$, one has $g(f_{ij}) = 0$ for $n+1 \leq j \leq m$, since then $\ol{f_{ij}} = \ol{f_i} - \ol{f_j} = \ol{0} - \ol{1} = \ol{1}$. With these conventions, if $1 \leq i \leq n$, then \eqref{eq:adjointaction} describes the action of $\Gev(A)$ on ${\gzero}^*$. Now our goal for $1 \leq i \leq n$ is to show that
\begin{equation} \label{eq:Gactiongoal} \textstyle
g.(\beta(f_i) \otimes 1 + W \otimes A) = (\sum_{j=1}^n \beta(f_j) \otimes g(f_{ij})^p) + W \otimes A 
\end{equation}

Let $A \in \csalg_k$, let $g \in G(A)$, and let $1 \leq i \leq n$. Set $a_{ij} = g(f_{ij})$. Applying \eqref{eq:VWbimodiso} in the case $V = W = k[G_1]$, and using the fact that $G$ acts by algebra automorphisms on $k[G_1]$, i.e., that $G(A)$ acts by $A$-super\-algebra automorphisms on $k[G_1] \otimes A$ for each $A \in \csalg_k$, one obtains
\begin{equation} \label{eq:g.beta} \textstyle
g.(\betabar(f_i) \otimes 1 + W \otimes A) = ( \sum_{r=1}^{p-1} \cbinom{p}{i}(\sum_{s=1}^m f_s \otimes a_{is})^r \otimes_A (\sum_{s=1}^m f_s \otimes a_{is})^{p-r} ) + W \otimes A.
\end{equation}
There exists an obvious analogue of the Hochschild complex for the $A$-supergroup scheme $G_{1,A}$ obtained via base change to $A$ from $G_1$. Specifically, $C^n(G_{1,A},A) = A[G_{1,A}]^{\otimes n} \cong C^n(G_1,k) \otimes A$, and the differential $\partial_A$ on $\Cbul(G_{1,A},A)$ is induced by the coproduct on $A[G_{1,A}] = k[G_1] \otimes A$. The filtration on $\Cbul(G_1,k)$ also induces a corresponding filtration on $\Cbul(G_{1,A},A)$. Now there exists an evident analogue $\beta_A: C^1(G_{1,A},A) \rightarrow C^2(G_{1,A},A)$ of the function $\beta$. Using this function, \eqref{eq:g.beta} can be rewritten as $g.(\betabar(f_i) \otimes 1 + W \otimes A) = \beta_A(\sum_{s=1}^m f_s \otimes a_{is}) + W \otimes A$. Each term in the sum $\sum_{s=1}^m f_s \otimes a_{is}$ is homogeneous of degree $\ol{0}$, and satisfies $\partial_A(f_s \otimes a_{is}) \in (I_{\ve,A})^2$ because $f_s \otimes a_{is} \in I_{\ve,A}$. Here $I_{\ve,A}$ is the augmentation ideal of $A[G_{1,A}]$. Let $h_1,h_2 \in (I_{\ve,A})_{\ol{0}}$. Then in a generalization of \eqref{eq:betaadditive}, one obtains
\[ \textstyle
\beta_A(h_1+h_2) \in \beta_A(h_1) + \beta_A(h_2) - \partial_A( \sum_{i=1}^{p-1} \cbinom{p}{i} h_1^i h_2^{p-i}) + F^{p+1} C^2(G_{1,A},A).
\]
Since $W \otimes A \cong F^{p+1}C^2(G_{1,A},A) + \partial_A(F^pC^1(G_{1,A},A))$, it then follows that
\[ \textstyle
g.(\betabar(f_i) \otimes 1 + W \otimes A) = \sum_{s=1}^m \beta_A(f_s \otimes a_{is}) + W.\otimes A
\]
Now consider $\beta_A(f_s \otimes a_{is}) = \sum_{r=1}^{p-1} \cbinom{p}{r} (f_s \otimes a_{is})^r \otimes_A (f_s \otimes a_{is})^{p-r}$. If $r > 1$, and if either $\ol{f_s} = \ol{1}$ or $\ol{a_{is}} = \ol{1}$, then $(f_s \otimes a_{is})^r = \pm (f_s^r \otimes a_{is}^r) = 0$, because odd elements in a commutative $k$-superalgebra square to $0$ by the assumption $p > 2$. Then it follows for $n+1 \leq s \leq m$ that $\beta_A(f_s \otimes a_{is}) = 0$. We may also assume for $1 \leq s \leq n$ that $\ol{f_s} = \ol{a_{is}} = \ol{0}$. Then given $1 \leq s \leq n$, and applying \eqref{eq:VWbimodiso} in the case $V = W = k[G_1]$, one has
\begin{align*}
\beta_A(f_s \otimes a_{is}) &= \textstyle \sum_{r=1}^{p-1} \cbinom{p}{r} (f_s^r \otimes a_{is}^r) \otimes_A (f_s^{p-r} \otimes a_{is}^{p-r}) \\
&= \textstyle \sum_{r=1}^{p-1} \cbinom{p}{r} (f_s^r \otimes f_s^{p-r}) \otimes a_{is}^p \\
&= \beta(f_s) \otimes g(f_{is})^p
\end{align*}
Then \eqref{eq:Gactiongoal} holds, and the image of $\betabar$ is isomorphic as a $G$-supermodule to $({\gzero}^*)^{(1)}$.
\end{proof}

Given a vector space $V$, write $V(i)$ for $V$ considered as a graded space concentrated in external degree $i$. Thus, in the notation of Proposition \ref{proposition:E1page}, $S(V(2)) = S'(V)$. One has $S^j(V(i)) = 0$ unless $j \in i \Z$, and $S^{ji}(V(i)) \cong S^j(V(1))$ as $k$-spaces.

\begin{lemma} \label{lemma:Gisog1poly}
In the spectral sequence \eqref{eq:G1specseqE0M}, the subalgebra $S({\gone}^*)^p$ of $E_0(k)$ is isomorphic as a $G$-supermodule to $S({\gone}^*(p))^{(1)}$.
\end{lemma}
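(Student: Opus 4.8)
The plan is to write down an explicit algebra map $\Phi\colon S({\gone}^*(p))^{(1)} \to S({\gone}^*)^p$, prove it is an isomorphism of graded super\-algebras, and then verify that it is $G$-equivariant. On generators $\Phi$ is the $p$-power map $f \mapsto f^p$ for $f \in {\gone}^*$. Since the $p$-power map on a commutative $k$-algebra of characteristic $p$ is additive and $p$-semilinear, it defines a $k$-linear injection of the Frobenius twist $({\gone}^*)^{(1)}$ into $S^p({\gone}^*)$ whose image is the span of $\set{f^p : f \in {\gone}^*}$; perfectness of $k$ guarantees this span is the full degree-$p$ generating space of $S({\gone}^*)^p$. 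Fixing the basis $\set{y_1,\ldots,y_t}$ of ${\gone}$, the subalgebra $S({\gone}^*)^p$ is the polynomial algebra $k[(y_1^*)^p,\ldots,(y_t^*)^p]$, so $\Phi$ extends to an algebra isomorphism. The generators $(y_j^*)^p$ have external degree $p$ and internal degree $\ol{p} = \ol{1}$, matching the generators of $S({\gone}^*(p))^{(1)}$, so $\Phi$ respects the $(\Z \times \Z_2)$-grading.

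It remains to check $G$-equivariance, which is the crux. By Proposition \ref{proposition:G1specseqE0}, $G$ acts on $E_0(k)$ by super\-algebra automorphisms and the identification $E_0^{0,1}(k) \cong \g^*$ is an isomorphism of $G$-super\-modules; restricting to odd parts gives the coadjoint action of $G$ on ${\gone}^* = (\g^*)_{\ol{1}}$. The main computation is to evaluate this action on a $p$-th power. Working over the base extension to $A$ as in the proof of Proposition \ref{proposition:G1specseqE0}, fix $A \in \csalg_k$ and $g \in G(A)$, and write the coadjoint comodule map as $\Delta(f) = \sum_j f_j \otimes c_j$ with $f_j \in {\gone}^*$ and $c_j \in k[G]$, so that $g.(f \otimes 1) = \sum_j f_j \otimes g(c_j)$ in ${\gone}^* \otimes A$. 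Because $G$ preserves parity and $f$ is odd, each $f_j$ is odd and each coefficient $g(c_j)$ is \emph{even}. The generators of $S({\gone}^*)$ have bidegree $(1,\ol{1})$, so by \eqref{eq:gradedtwist} the external and internal signs cancel and the $f_j$ commute \emph{ordinarily}, while the even coefficients $g(c_j)$ contribute no twist sign. Thus the subring of $S({\gone}^*) \otimes A$ generated by the $f_j \otimes g(c_j)$ is an ordinary commutative ring of characteristic $p$, and the freshman's dream applies:
\[
g.(f^p \otimes 1) = \Bigl( \sum_j f_j \otimes g(c_j) \Bigr)^p = \sum_j f_j^p \otimes g(c_j)^p = \sum_j f_j^p \otimes g(c_j^p).
\]

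Finally I would match this with the Frobenius-twist action recalled in Section \ref{subsection:Frobeniuskernels}. Regard $S({\gone}^*(p))$ as a $\Gev$-super\-module via the restriction of the coadjoint action; its comodule structure over $k[\Gev]$ sends $f$ to $\sum_j f_j \otimes \ol{c_j}$, where $\ol{c_j}$ is the image of $c_j$ in $k[\Gev] = k[G]/(k[G]k[G]_{\ol{1}})$. Since the $p$-power map on $k[G]$ factors through $k[\Gev]$, one has $g(c_j^p) = g(\ol{c_j}^{\,p})$, so the explicit twist formula $g.(m^{(1)} \otimes 1) = \sum_j m_j^{(1)} \otimes g(\ol{c_j}^{\,p})$ gives exactly $g.(f^{(1)} \otimes 1) = \sum_j f_j^{(1)} \otimes g(c_j^p)$, which $\Phi$ carries to $g.(f^p \otimes 1)$. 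Hence $\Phi$ intertwines the two actions on generators, and since both sides are generated as algebras by these generators and $G$ acts by algebra automorphisms, $\Phi$ is an isomorphism of $G$-super\-modules. I expect the only delicate point to be the sign analysis establishing that the relevant elements commute ordinarily so that the freshman's dream is available; everything else is bookkeeping, the one remaining care being to track the passage of $p$-th powers through the quotient $k[G] \twoheadrightarrow k[\Gev]$.
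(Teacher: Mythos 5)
There is a genuine gap, and it sits exactly at the point you flagged as the crux. Your equivariance computation rests on the claim that, for odd $f \in {\gone}^*$, the coadjoint comodule map satisfies $\Delta(f) = \sum_j f_j \otimes c_j$ with every $f_j \in {\gone}^*$ and every coefficient $g(c_j)$ even, ``because $G$ preserves parity.'' This is false for a supergroup scheme. What parity-preservation actually says is that $\Delta_{\g^*}$ is an \emph{even} map $\g^* \rightarrow \g^* \otimes k[G]$, so for odd $f$ the image lies in $({\gone}^* \otimes k[G]_{\ol{0}}) \oplus ({\gzero}^* \otimes k[G]_{\ol{1}})$: the expansion of $g.(f \otimes 1)$ for $g \in G(A)$, $A \in \csalg_k$, in general involves \emph{even} basis vectors of $\g^*$ paired with \emph{odd} elements of $A$. (Only when $A$ is purely even, or when one restricts to $\Gev$ as in Remark \ref{remark:evensubmodule}, does the action preserve ${\gone}^* \otimes A$.) Consequently your sign analysis and the resulting ``ordinary commutativity'' are only verified on a subring that does not contain all the terms actually appearing, so the freshman's-dream step is not justified as written. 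This is not a cosmetic omission: the mixing of parities over a general $A$ is precisely the ``super'' phenomenon the lemma has to confront, and assuming it away trivializes the statement.

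The paper's proof shows how to close the gap, and your argument survives with that repair. One extends $x_1,\ldots,x_t$ to a full homogeneous basis $x_1,\ldots,x_m$ of $\g^*$ and writes $g.(x_i \otimes 1) = \sum_{s=1}^m x_s \otimes a_{is}$, noting that for $i \leq t$ the coefficients satisfy $\ol{a_{is}} = \ol{0}$ when $s \leq t$ but $\ol{a_{is}} = \ol{1}$ when $s > t$. The key observation is that \emph{every} term $x_s \otimes a_{is}$ then has total bidegree $(1,\ol{1})$ in $\Lambda_s(\g^*) \otimes A$, so by the sign rule \eqref{eq:gradedtwist} the external and internal signs cancel for all pairs, including the mixed ones; hence all terms commute ordinarily and $(\sum_s x_s \otimes a_{is})^p = \sum_s x_s^p \otimes a_{is}^p$. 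Finally, the cross terms with $s > t$ die in the $p$-th power, since the even generator $x_s$ satisfies $x_s^2 = 0$ in $\Lambda_s(\g^*)$ (equivalently, $a_{is}^2 = 0$ for odd $a_{is}$ since $p > 2$), leaving exactly $g.(x_i^p \otimes 1) = \sum_{s=1}^t x_s^p \otimes a_{is}^p$. Your final paragraph matching this against the Frobenius-twist action through $k[G] \twoheadrightarrow k[\Gev]$ is sound and agrees with the paper; it is only the middle step that needs the fuller bookkeeping.
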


\begin{proof}
Fix a basis $x_1,\ldots,x_t$ for the subspace ${\gone}^* = (\g^*)_{\ol{1}}$ of $E_0^{0,1}(k)$. Then $x_1^p,\ldots,x_t^p$ generate the subalgebra $S({\gone}^*)^p$ of $\Lambda_s(\g^*)$. Now extend $x_1,\ldots,x_t$ to a homogeneous basis $x_1,\ldots,x_m$ for $\g^*$, so that $x_{t+1},\ldots,x_m$ is a basis for ${\gzero}^*$, and suppose that the structure map $\Delta_{\g^*}: \g^* \rightarrow \g^* \otimes k[G]$ for the action of $G$ on $\g^*$ is given by $\Delta_{\g^*}(x_i) = \sum_{s=1}^m x_s \otimes f_{is}$. Let $A \in \csalg_k$, let $g \in G(A)$, and set $a_{is} = g(f_{is})$. Then $g.(x_i^p \otimes 1) = (\sum_{s=1}^m x_s \otimes a_{is})^p$. Here the product is computed in the $A$-superalgebra $\Lambda_s(\g^*) \otimes A$. Now suppose that $1 \leq i \leq t$. Then $\ol{a_{is}} = \ol{0}$ if $1 \leq s \leq t$, and $\ol{a_{is}} = \ol{1}$ otherwise, so it follows that the $x_s \otimes a_{is}$ commute (in the ordinary sense) in $\Lambda_s(\g^*)$. Then $g.(x_i^p \otimes 1) = \sum_{s=1}^m x_s^p \otimes a_{is}^p = \sum_{s=1}^t x_s^p \otimes a_{is}^p$. The second equality holds because $x_s^2 = 0$ in $\Lambda_s(\g^*)$ if $\ol{x_s} = \ol{0}$. Thus, $S({\gone}^*)^p \cong S({\gone}^*(p))^{(1)}$ as $G$-super\-modules.
\end{proof}

Let $G$ be an affine supergroup scheme, and let $M$ be a $G$-supermodule. Then $M$ is naturally a $\Dist(G)$-supermodule, and the $G$- and $\Dist(G)$-supermodule structures are compatible in the sense that the $\Dist(G)$-supermodule structure map $\Dist(G) \otimes M \rightarrow M$ is a $G$-supermodule homomorphism. Here $G$ acts on $\Dist(G)$ via the adjoint representation. This compatibility descends to the action of $\g = \Lie(G)$ on $M$. Now the Koszul resolution $Y(\g)$ described in Section \ref{subsection:Koszulresolution} is a complex of $G$-supermodules, so it follows that $\Hbul(\g,M)$ is naturally a $G$-supermodule.

\begin{corollary} \label{corollary:FPG1specseqE2}
In the spectral sequence of Proposition \ref{proposition:G1specseqE0}, there exists a $G$-supermodule isomorphism $E_1^{i,j}(M) \cong \opH^{j-i}(\g,M) \otimes S^i({\gzero}^*)^{(1)}$.  The spectral sequence can be reindexed so that
\begin{equation} \label{eq:FPG1specseqE0}
E_0^{i,j}(M) = M \otimes \Lambda_s^j(\g^*) \otimes S^i({\gzero}^*(2))^{(1)} \Rightarrow \opH^{i+j}(G_1,M),
\end{equation}
and $E_2^{i,j}(M) = \opH^j(\g,M) \otimes S^i({\gzero}^*(2))^{(1)}$.
\end{corollary}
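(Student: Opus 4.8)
The plan is to upgrade the non-equivariant computations of Theorem~\ref{theorem:V(L)specseqE1page} and Remark~\ref{remark:FPreindexing} (applied with $L = \g$) to the level of $G$-supermodules, using the fact that Proposition~\ref{proposition:G1specseqE0} already presents the whole spectral sequence as one of $G$-supermodules. Since each differential $d_r$ is a $G$-supermodule homomorphism, and the algebra structure on $E_r(k)$ together with the module structure on $E_r(M)$ are $G$-equivariant, every page $E_r(M)$ inherits a canonical $G$-supermodule structure, and the underlying graded-superspace identifications of the $E_1$- and $E_2$-pages are already known. Thus the only task is to identify the two tensor factors $G$-equivariantly and to track the Frobenius twists.

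First I would pin down the two factors as $G$-supermodules. For the polynomial factor, Proposition~\ref{proposition:G1specseqE0} gives a $G$-supermodule isomorphism $E_0^{1,1}(k) \cong ({\gzero}^*)^{(1)}$, the Frobenius twist being exactly the semilinearity of the $\betabar$-construction recorded in its proof; by Proposition~\ref{proposition:permanentcycles} the subalgebra these classes generate consists of permanent cycles, so $S^i({\gzero}^*)^{(1)}$ survives unchanged to $E_1$ as a $G$-supermodule. For the cohomology factor, the discussion preceding the corollary shows that the Koszul resolution $Y(\g)$ is a complex of $G$-supermodules, so $\opH^{j-i}(\g,M)$ carries a natural $G$-supermodule structure.

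The key step, and the main obstacle, is to match these two descriptions: I must show that the $G$-supermodule structure on the $d_0$-cohomology coming from the spectral sequence agrees with the natural one on $\Hbul(\g,M)$ coming from $Y(\g)$. Here I would avoid appealing to the non-natural resolution $X(L)$ and instead argue intrinsically on the $G$-equivariant spectral sequence. Because $S({\gzero}^*)^{(1)}$ consists of permanent cycles and lies in $\ker(d_0)$, the derivation property of $d_0$ reduces the determination of $d_0$ on the $\Lambda_s(\g^*)$-factor to the two maps $d_0^{0,0}(M) \colon E_0^{0,0}(M) \rightarrow E_0^{0,1}(M)$ and $d_0^{0,1}(k) \colon E_0^{0,1}(k) \rightarrow E_0^{0,2}(k)$. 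By Proposition~\ref{proposition:G1specseqE0} the identifications $E_0^{0,0}(M) \cong M$ and $E_0^{0,1}(k) \cong \g^*$ (the coadjoint representation) are $G$-equivariant, while by the proof of Theorem~\ref{theorem:V(L)specseqE1page} these maps coincide with the degree-$0$ and degree-$1$ pieces of the Lie superalgebra cochain differential. As that differential is $G$-equivariant when computed on the $G$-complex $Y(\g)$, and as both $d_0$ and the cochain differential are determined on all of $E_0^{\bullet,\bullet}(M)$ by their values on these generators together with the $G$-equivariant derivation rule, the $d_0$-complex is isomorphic to $\Cbul(\g,M) \otimes S({\gzero}^*)^{(1)}$ as a complex of $G$-supermodules. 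Passing to cohomology yields the asserted $G$-supermodule isomorphism $E_1^{i,j}(M) \cong \opH^{j-i}(\g,M) \otimes S^i({\gzero}^*)^{(1)}$.

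Finally, the reindexing is a purely formal relabeling of bidegrees, exactly as in Remark~\ref{remark:FPreindexing} and under the convention $S({\gzero}^*(2)) = S'({\gzero}^*)$ introduced just above; it therefore transports $G$-supermodule structures without change, giving the displayed $E_0$-page \eqref{eq:FPG1specseqE0} together with $E_2^{i,j}(M) = \opH^j(\g,M) \otimes S^i({\gzero}^*(2))^{(1)}$.
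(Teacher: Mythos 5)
Your proof is correct and follows essentially the same route as the paper's own argument: Proposition \ref{proposition:G1specseqE0} supplies the $G$-supermodule structure and the equivariance of the $E_0$-differentials, the proof of Theorem \ref{theorem:V(L)specseqE1page} supplies the identification of those differentials with the differentials of $\Cbul(\g,M)$, and Remark \ref{remark:FPreindexing} gives the reindexing. One small caveat: your stated aim of avoiding the non-natural resolution $X(L)$ is not actually realized, since the proof of Theorem \ref{theorem:V(L)specseqE1page} that you invoke rests on the comparison with the $X(L)$-based spectral sequence; but this is harmless (and is exactly what the paper does), because the non-naturality enters only into the linear identification of the differentials, with $G$-equivariance supplied separately by Proposition \ref{proposition:G1specseqE0}.
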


\begin{proof}
The differentials on the $E_0$-page of the spectral sequence are $G$-supermodule homomorphisms, and by the proof of Theorem \ref{theorem:V(L)specseqE1page}, they identify with the differentials in the complex $\Cbul(\g,M)$. Thus, we conclude that the isomorphism $E_1^{i,j}(M) \cong \opH^{j-i}(\g,M) \otimes S^i({\gzero}^*)^{(1)}$ of Theorem \ref{theorem:V(L)specseqE1page} is an isomorphism of $G$-supermodules. The last claim follows from Remark \ref{remark:FPreindexing}.
\end{proof}

Consider the spectral sequence of Corollary \ref{corollary:FPG1specseqE2}. By Proposition \ref{proposition:permanentcycles} and Lemma \ref{lemma:Gisog1poly}, $E_0(k)$ is finitely-generated over a subalgebra of permanent cycles that is isomorphic as a $G$-supermodule to $S({\gone}^*(p))^{(1)} \otimes S({\gzero}^*(2))^{(1)}$. The horizontal edge map of \eqref{eq:FPG1specseqE0} provides a homomorphism of graded $G$-superalgebras $S({\gzero}^*(2)) \rightarrow \Hbul(G_1,k)$. On the other hand, $S({\gone}^*(p))^{(1)}$ is concentrated in the first column of \eqref{eq:FPG1specseqE0}, and by standard properties of first quadrant spectral sequences, the permanent cycles in the first column consist precisely of the elements in the image of the vertical edge homomorphism. This implies that $S({\gone}^*(p))^{(1)}$ is the homomorphic image of a subalgebra of $\Hbul(G_1,k)$. Since $\Hbul(G_1,k)$ is graded-commutative by Remark \ref{remark:Hochschildcobar}, and since $S({\gone}^*(p))^{(1)}$ is a free graded-commutative graded superalgebra, it follows that there exists a graded subalgebra of $\Hbul(G_1,k)$ that maps isomorphically as an algebra (though not necessarily as a $G$-super\-module) onto $S({\gone}^*(p))^{(1)}$ via the vertical edge map. Equivalently, there exists a homomorphism of graded superalgebras $S({\gone}^*(p))^{(1)} \rightarrow \Hbul(G_1,k)$ whose composition with the vertical edge map has image equal to the subalgebra $S({\gone}^*)^p$ of $E_0(k)$.

Combining the map $S({\gone}^*(p))^{(1)} \rightarrow \Hbul(G_1,k)$ with the edge map $S({\gzero}^*(2)) \rightarrow \Hbul(G_1,k)$, there exists a homomorphism of graded superalgebras
\begin{equation} \label{eq:polyhomtoG1}
S({\gone}^*(p))^{(1)} \otimes S({\gzero}^*(2))^{(1)} \rightarrow \Hbul(G_1,k).
\end{equation}
Moreover, by the proof of Theorem \ref{theorem:V(L)cohomfingen}, it follows for each finite-dimensional $G_1$-supermodule $M$ that $\Hbul(G_1,M)$ is finitely-generated over the image of \eqref{eq:polyhomtoG1}. For each $r \in \N$, the $p^{r-1}$-power map defines $G$-supermodule homomorphisms
\[
({\gzero}^*)^{(r)}(2) \rightarrow S^{2p^{r-1}}({\gzero}^*(2))^{(1)} \quad \text{and} \quad ({\gone}^*)^{(r)}(p) \rightarrow S^{p^r}({\gone}^*(p))^{(1)}.
\]
Combined with \eqref{eq:polyhomtoG1}, these maps induce a homomorphism of graded superalgebras
\begin{equation} \label{eq:rpolyhomtoG1}
S({\gone}^*(p^r))^{(r)} \otimes S({\gzero}^*(2p^{r-1}))^{(r)} \rightarrow \Hbul(G_1,k)
\end{equation}
over which $\Hbul(G_1,M)$ is also finitely-generated.

\subsection{Reduction to infinitesimal supergroup schemes} \label{subsection:reductioninfinitesimal}

In this section we describe how the cohomological finite generation problem for finite supergroup schemes reduces to the special case of infinitesimal supergroup schemes. Recall that a finite group scheme $G$ is \emph{etale} if $k[G]$ is a separable $k$-algebra. By \cite[Theorem 6.7]{Waterhouse:1979}, each affine algebraic group scheme $H$ admits a maximal etale quotient group $\pi_0(H)$ (the component group of $H$).

\begin{lemma} \label{lemma:etalequotient}
Let $k$ be a perfect field of characteristic $p > 2$, and let $G$ be an affine algebraic $k$-super\-group scheme. Then there exists an etale group scheme $\pi_0(G) = \pi_0(G/G_1)$, and a normal sub\-super\-group scheme $G^0$ of $G$ such that $G/G^0 \cong \pi_0(G)$. If $G$ is finite, then $G \cong G^0 \rtimes \pi_0(G)$, and $G^0$ is infinitesimal.
\end{lemma}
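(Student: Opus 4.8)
The plan is to reduce every assertion to the underlying purely even group scheme $\Gev$, where the classical theory of the component group is available, and then to transport the resulting data back to $G$ using the structure isomorphism \eqref{eq:Masuoka}. Since $G$ is algebraic, $\Gev$ is an ordinary affine algebraic $k$-group scheme, so by \cite[Theorem 6.7]{Waterhouse:1979} it admits a maximal \'etale quotient $\pi_0(\Gev)$, fitting into the connected--\'etale sequence $1 \to (\Gev)^0 \to \Gev \to \pi_0(\Gev) \to 1$ in which $(\Gev)^0$ is the identity component. Throughout I would set $\pi_0(G) := \pi_0(\Gev)$.

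The first point is that every \'etale supergroup scheme is purely even: if $E$ is \'etale then $k[E]$ is separable, hence reduced, and since $p \neq 2$ every odd element of a commutative $k$-superalgebra squares to zero and so lies in the nilradical; thus $k[E]_{\ol{1}} = 0$ and $E = E_{\ev}$. I would therefore take $k[\pi_0(G)]$ to be the maximal separable sub-Hopf-superalgebra of $k[G]$, which is purely even by the previous remark. Because $k[G]_{\ol{1}}$ consists of nilpotents, the kernel of the projection $k[G] \twoheadrightarrow k[\Gev]$ lies in the nilradical, so this projection induces $k[G]_{\mathrm{red}} \cong k[\Gev]_{\mathrm{red}}$ and restricts to an injection on any reduced subalgebra; combined with the lifting of separable subalgebras along a surjection with nilpotent kernel, this identifies $k[\pi_0(G)]$ with $k[\pi_0(\Gev)]$ realized as a sub-Hopf-superalgebra of $k[G]$. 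The inclusion $k[\pi_0(G)] \hookrightarrow k[G]$, together with faithful flatness of $k[G]$ over this Hopf subalgebra, defines the quotient morphism $G \twoheadrightarrow \pi_0(G)$, whose restriction to $\Gev$ is exactly $\Gev \to \pi_0(\Gev)$. Setting $G^0 := \ker(G \to \pi_0(G))$ yields a normal closed subsupergroup with $G/G^0 \cong \pi_0(G)$ and $(G^0)_{\ev} = (\Gev)^0$. Finally, since any homomorphism from the infinitesimal $G_1$ to an \'etale group scheme is trivial, the quotient $G \to \pi_0(G)$ factors through $G/G_1$ and exhibits $\pi_0(G)$ as the maximal \'etale quotient of $G/G_1$ as well; hence $\pi_0(G) = \pi_0(G/G_1)$.

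For the finite case I would invoke the splitting of the connected--\'etale sequence: when $\Gev$ is finite over the perfect field $k$, the identity component $(\Gev)^0$ is infinitesimal and the sequence splits, giving $\Gev \cong (\Gev)^0 \rtimes \pi_0(\Gev)$ by \cite[Ch.~6]{Waterhouse:1979}. The resulting section $s \colon \pi_0(\Gev) \to \Gev \hookrightarrow G$ is a homomorphism from the purely even group $\pi_0(G) = \pi_0(\Gev)$ into $G$, and since the restriction of $G \to \pi_0(G)$ to $\Gev$ is $\Gev \to \pi_0(\Gev)$, the map $s$ is a section of $G \to \pi_0(G)$. A section of the quotient by a normal subsupergroup gives the semidirect product decomposition $G \cong G^0 \rtimes \pi_0(G)$. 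Finally $(G^0)_{\ev} = (\Gev)^0$ is infinitesimal, so $G^0$ is infinitesimal by the equivalence recorded after \eqref{eq:Masuoka}.

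The main obstacle is the identification carried out in the second paragraph: realizing $\pi_0(\Gev)$ not merely as an abstract \'etale group scheme but as a genuine quotient supergroup scheme of $G$, i.e.\ producing $k[\pi_0(\Gev)]$ as a sub-Hopf-superalgebra of $k[G]$ compatible with the coproduct and antipode. The difficulty is that \eqref{eq:Masuoka} is only an isomorphism of augmented superalgebras, so $k[\Gev]$ is visible inside $k[G]$ only as a quotient, and the separable-lifting argument must be checked to respect the Hopf structure so that the lift is honestly a sub-Hopf-superalgebra. Once this is in place, normality of $G^0$, the isomorphism $G/G^0 \cong \pi_0(G)$, and the splitting in the finite case all follow routinely from their even counterparts.
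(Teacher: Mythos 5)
Your proposal is correct in outline but takes a genuinely different route, and the step you flag as the ``main obstacle'' is precisely the work the paper's construction is designed to avoid. You build $\pi_0(G)$ from the purely even \emph{subgroup} $\Gev$ and must then lift $\pi_0(k[\Gev])$ through the nilpotent surjection $k[G] \twoheadrightarrow k[\Gev]$; the paper instead builds it from the purely even \emph{quotient} $H = G/G_1$: by \cite[Theorem 6.1]{Zubkov:2009}, $k[H] = \im(F_G^*) = \set{f^p : f \in k[G]_{\ol{0}}}$ is already a purely even sub-Hopf-algebra of $k[G]$, so $\pi_0(k[H]) \subset k[H] \subset k[G]$ sits inside $k[G]$ with no lifting whatsoever, $G^0$ is just the kernel of $G \rightarrow H \rightarrow \pi_0(H)$, and the identification $\pi_0(G) = \pi_0(G/G_1)$ demanded by the statement is true by definition rather than by a universal-property argument. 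That said, your lifting step can be closed: the kernel of $k[G]_{\ol{0}} \rightarrow k[\Gev]$ is nilpotent (by \eqref{eq:Masuoka} and finite-dimensionality of $\Lie(G)_{\ol{1}}$, since $G$ is algebraic), any separable subalgebra of $k[G]$ automatically lies in $k[G]_{\ol{0}}$ (idempotents in a commutative superalgebra are even when $p \neq 2$), and the uniqueness of lifts of separable algebras along nilpotent extensions forces the lift $\iota$ to be a sub-Hopf-algebra --- compare the two lifts $\Delta_G \circ \iota$ and $(\iota \otimes \iota) \circ \Delta_{\pi_0(\Gev)}$ of the same map into $k[\Gev] \otimes k[\Gev]$ --- and likewise gives the maximality needed for $\pi_0(\Gev) \cong \pi_0(G/G_1)$; one then still needs Zubkov's quotient theory (or faithful flatness via \eqref{eq:Masuoka}) to conclude $G/G^0 \cong \pi_0(G)$, exactly as the paper does. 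In the finite case the two arguments also diverge: the paper shows directly that the reduced subgroup $\Gred$, with $k[\Gred] = k[G]/\Nil(k[G])$ (a Hopf quotient because $k$ is perfect), maps isomorphically onto $\pi_0(G)$ by a dimension count using \cite[Corollary 6.8]{Waterhouse:1979}, and then proves $G^0$ infinitesimal by an explicit nilpotence computation in $k[G^0]$; you instead import the classical splitting of $\Gev$ and transport its section along $\Gev \hookrightarrow G$, and you obtain infinitesimality more cleanly from $(G^0)_{\ev} = G^0 \cap \Gev = (\Gev)^0$ together with the equivalence recorded after \eqref{eq:Masuoka}. Both routes are valid: the paper's buys a lifting-free construction of the quotient map, while yours buys a softer finite-case argument that leans on the classical connected--etale splitting rather than reproving it in the super setting.
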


\begin{proof}
Set $H = G/G_1$. By \cite[Theorem 6.1]{Zubkov:2009}, $H$ identifies with the affine $k$-supergroup scheme that has coordinate superalgebra $k[H] = \im(F_G^*) = \set{f^p: f \in k[G]} = \set{f^p : f \in k[G]_{\ol{0}}}$. Under this identification, the quotient $F_G: G \rightarrow H$ is induced by the inclusion $\im(F_G^*) \subset k[G]$. Since $k[H]$ is a purely even superalgebra, we can consider $H$ as an ordinary affine algebraic $k$-group scheme. Then by \cite[Theorem 6.7]{Waterhouse:1979}, there exists a normal subgroup scheme $H^0 \unlhd H$ (the connected component of $H$ containing the identity) and an etale group scheme $\pi_0(H)$ such that $H/H^0 \cong \pi_0(H)$. Specifically, $\pi_0(H)$ is the affine group scheme with coordinate algebra $\pi_0(k[H])$, the largest separable subalgebra of $k[H]$, and the quotient $H \rightarrow \pi_0(H)$ is induced by the inclusion $\pi_0(k[H]) \subset k[H]$. Set $\pi_0(G) = \pi_0(H)$, and let $G^0$ be the kernel of the composite morphism $G \rightarrow H \rightarrow \pi_0(H)$. Then $G^0$ is a normal subsupergroup scheme of $G$, and $G/G^0 \cong \pi_0(G)$. Considering $\pi_0(k[H])$ as a subalgebra of $k[G]$, $G^0$ is the closed subsupergroup scheme of $G$ defined by the ideal $k[G](\pi_0(k[H]) \cap I_\ve)$.

Since $k[G]$ is a commutative superalgebra, and since $\chr(k) \neq 2$, the elements of $k[G]_{\ol{1}}$ square to zero. Then given $\fzero \in k[G]_{\ol{0}}$ and $\fone \in k[G]_{\ol{1}}$, one has $(\fzero + \fone)^n = \fzero^n + n \fzero^{n-1} \fone$, and from this it follows that the nilpotent elements in $k[G]$ form a superideal, called the nilradical of $k[G]$ and denoted $\Nil(k[G])$. Set $k[\Gred] = k[G]/\Nil(k[G])$. Since $k[G]_{\ol{1}} \subset \Nil(k[G])$, $k[\Gred]$ is a purely even reduced $k$-algebra. Since $k$ is perfect, the tensor product of algebras $k[\Gred] \otimes k[\Gred]$ is also reduced \cite[V.15.5]{Bourbaki:2003}, and it follows that the composite homomorphism
\[
k[G] \stackrel{\Delta_G}{\rightarrow} k[G] \otimes k[G] \rightarrow k[\Gred] \otimes k[\Gred]
\]
factors through $k[\Gred]$. Similarly, the other Hopf superalgebra structure maps on $k[G]$ descend to maps on $k[\Gred]$, inducing on $k[\Gred]$ the structure of a commutative Hopf algebra over $k$, with corresponding affine $k$-group scheme $\Gred$. Observe that $k[\Gred]$ is a quotient of $k[\Gev]$, so that $\Gred$ is a closed subgroup scheme of $\Gev$. More precisely,
\begin{equation} \label{eq:k[Gred]}
k[\Gred] \cong k[\Gev]/\Nil(k[\Gev]).
\end{equation}

Now suppose that $G$ is finite. We claim that the composite homomorphism $\Gred \hookrightarrow G \rightarrow \pi_0(G)$ is an isomorphism. Equivalently, we claim that the composite comorphism $\pi_0(k[H]) \hookrightarrow k[G] \twoheadrightarrow k[\Gred]$ is an isomorphism. Since $k$ is perfect, a commutative $k$-algebra is separable if and only if it is reduced \cite[V.15.5]{Bourbaki:2003}. Then $\pi_0(k[H])$ is reduced, and the composite $\pi_0(k[H]) \hookrightarrow k[G] \twoheadrightarrow k[\Gred]$ is an injection. Next, recall that $k[H] = \im(F_G^*) \cong k[\Gev]^{(1)}/\ker(F_G^*)$. Since $\ker(F_G^*)$ consists of nilpotent elements in $k[\Gev]^{(1)}$, it follows from \eqref{eq:k[Gred]} and \cite[Corollary 6.8]{Waterhouse:1979} that $\dim_k \pi_0(k[H]) = \dim_k k[\Gred]$. Then the composite $\pi_0(k[H]) \hookrightarrow k[G] \twoheadrightarrow k[\Gred]$ must be an isomorphism. Thus, $\Gred$ maps isomorphically onto $\pi_0(G)$, and it follows that $G \cong G^0 \rtimes \Gred \cong G^0 \rtimes \pi_0(G)$.

Finally, it remains to show that $G$ being finite implies that $G^0$ is infinitesimal. Since $G$ is finite, $k[G_0]$ is finite-dimensional, hence (left or right) artinian. Then to prove that $G^0$ is infinitesimal, it suffices to show that the augmentation ideal of $k[G^0]$ is a nil ideal. This is equivalent to showing that if $f \in I_\ve$, then some power of $f$ is an element of the ideal $k[G](\pi_0(k[H]) \cap I_\ve)$ defining $G^0$. Write $I_H$ for the augmentation ideal of $k[H]$. Then $H^0$ has coordinate algebra $k[H]/(k[H](\pi_0(k[H]) \cap I_H))$. Since $G$ is finite, so is $H$, and hence the connected component $H^0$ of $H$ is an infinitesimal group scheme; cf.\ \cite[\S11.4]{Waterhouse:1979}. Now given $f \in I_\ve$, one has $f^p \in I_\ve \cap \im(F_G^*) \subset I_H$. Then by the fact that $H^0$ is infinitesimal, there exists $n \in \N$ such that $(f^p)^n \in k[H](\pi_0(k[H]) \cap I_H) \subset k[G](\pi_0(k[H]) \cap I_\ve)$. Thus, $G^0$ is an infinitesimal supergroup scheme.
\end{proof}

\begin{corollary}
Let $k$ be a perfect field of characteristic $p > 2$, and let $G$ be a finite $k$-supergroup scheme such that $G/G_1$ is infinitesimal. Then $G$ is infinitesimal. 
\end{corollary}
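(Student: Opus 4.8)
The plan is to deduce the corollary directly from Lemma \ref{lemma:etalequotient}. Since $G$ is finite, that lemma produces a decomposition $G \cong G^0 \rtimes \pi_0(G)$ in which $G^0$ is infinitesimal and $\pi_0(G) = \pi_0(G/G_1)$ is the maximal etale quotient of $G/G_1$. Thus it suffices to show that the hypothesis forces $\pi_0(G/G_1)$ to be the trivial group scheme; once this is known, the semidirect product collapses to $G \cong G^0$, and $G$ is infinitesimal.

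To show $\pi_0(G/G_1)$ is trivial, set $H = G/G_1$ and recall from the proof of Lemma \ref{lemma:etalequotient} that $k[H] = \im(F_G^*)$ is a purely even, hence ordinary, finite-dimensional commutative Hopf algebra, so $H$ may be treated as an ordinary finite $k$-group scheme. By hypothesis $H$ is infinitesimal, so its augmentation ideal $I_\ve$ is nilpotent. Then $I_\ve \subseteq \Nil(k[H])$, while $k[H]/I_\ve \cong k$ shows $I_\ve$ is maximal, whence $\Nil(k[H]) \subseteq I_\ve$; therefore $I_\ve = \Nil(k[H])$ is the unique maximal ideal and $k[H]$ is local. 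Since $k$ is perfect, a commutative $k$-algebra is separable if and only if it is reduced \cite[V.15.5]{Bourbaki:2003}, and any reduced subalgebra of $k[H]$ injects into $k[H]/\Nil(k[H]) \cong k$; hence the largest separable subalgebra $\pi_0(k[H])$ equals $k$ and $\pi_0(H) = \Sspk k$ is trivial.

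Combining the two steps, $\pi_0(G) = \pi_0(H)$ is trivial, so $G/G^0 \cong \pi_0(G)$ is trivial and $G \cong G^0$, which is infinitesimal by Lemma \ref{lemma:etalequotient}.

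The only real content beyond invoking Lemma \ref{lemma:etalequotient} is the standard fact that an infinitesimal group scheme has trivial component group; I expect the identification of the largest separable subalgebra of a local finite-dimensional algebra with nilpotent maximal ideal over a perfect field to be the one step requiring care, though it is routine. An alternative, more hands-on route would bypass $\pi_0$ entirely: given $f \in I_\ve \subset k[G]$, write $f = f_{\ol{0}} + f_{\ol{1}}$ with $f_{\ol{1}}^2 = 0$ and $f_{\ol{0}}^p \in \im(F_G^*)$ lying in the nilpotent augmentation ideal of $k[G/G_1]$, so that $f_{\ol{0}}$ is nilpotent; then, using that $\Nil(k[G])$ is a superideal (as established in the proof of Lemma \ref{lemma:etalequotient}), conclude $f \in \Nil(k[G])$, so the augmentation ideal of $k[G]$ is nil and therefore nilpotent.
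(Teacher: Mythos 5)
Your proof is correct and follows essentially the same route as the paper: the paper's own proof is the one-liner ``Since $G/G_1$ is infinitesimal, $\pi_0(G) = \pi_0(G/G_1)$ is trivial, and $G = G^0$ is infinitesimal,'' and your argument simply supplies the standard details behind that assertion (that a finite-dimensional local algebra with nilpotent augmentation ideal over a perfect field has largest separable subalgebra $k$). Your sketched alternative at the end, showing directly that every $f \in I_\ve \subset k[G]$ is nilpotent, is also valid and in fact mirrors the argument the paper uses at the end of the proof of Lemma \ref{lemma:etalequotient} to show $G^0$ is infinitesimal.
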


\begin{proof}
Since $G/G_1$ is infinitesimal, $\pi_0(G) = \pi_0(G/G_1)$ is trivial, and $G = G^0$ is infinitesimal.
\end{proof}

Recall that $\Hev(G,k)$ denotes the subalgebra of $\Hbul(G,k)$ of elements of even external degree.

\begin{theorem} \label{theorem:reduceinfinitesimal}
Let $k$ be a perfect field of characteristic $p > 2$, and let $G$ be a finite $k$-supergroup scheme. Write $G = G^0 \rtimes \pi_0(G)$ as in Lemma \ref{lemma:etalequotient}. Suppose $\Hbul(G^0,k)$ is a finitely-generated $k$-algebra, and for each finite-dimensional $G$-supermodule $M$ that $\Hbul(G^0,M)$ is finite over $\Hbul(G^0,k)$. Then $\Hbul(G,k)$ is a finitely-generated $k$-algebra, and $\Hbul(G,M)$ is finite over $\Hbul(G,k)$.
\end{theorem}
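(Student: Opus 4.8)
The plan is to run the Lyndon--Hochschild--Serre spectral sequence \eqref{eq:LHSspecseq} for the normal subsupergroup scheme $N := G^0$ of $G$, with quotient $G/N \cong \pi_0(G)$, thereby reducing the finite generation of $\Hbul(G,-)$ to the finite generation hypotheses on $\Hbul(G^0,-)$ together with the finite-group cohomology of $\pi_0(G)$. Writing $\Gamma := \pi_0(G)$, one has
\[
E_2^{i,j}(M) = \opH^i(\Gamma, \opH^j(G^0, M)) \Rightarrow \opH^{i+j}(G,M),
\]
a first-quadrant spectral sequence of algebras for $M = k$ and of modules over $E_2(k)$ in general, with the multiplicative structure recorded after \eqref{eq:LHSspecseq}. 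Here the conjugation action of $G$ on $G^0$ equips $\opH^\bullet(G^0,M)$ with a compatible $\Gamma$-supermodule structure.

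First I would reduce to the case $k = \ol{k}$. Cohomology of supergroup schemes commutes with extension of the base field (the Hochschild complex $\Cbul(G,M) = M \otimes k[G]^{\otimes \bullet}$ base-changes), the semidirect decomposition of Lemma \ref{lemma:etalequotient} is preserved under base change, and both finite generation of a $k$-algebra and module-finiteness descend along the faithfully flat extension $k \hookrightarrow \ol{k}$. Over $\ol{k}$ the étale quotient $\Gamma$ becomes the constant group scheme attached to a finite group, so that $\opH^\bullet(\Gamma,-)$ is ordinary finite group cohomology. Next I would establish that $E_2(k)$ is a finitely generated $k$-algebra and that $E_2(M)$ is a finite $E_2(k)$-module. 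Set $R = \Hbul(G^0,k)$, a finitely generated graded-commutative $\Gamma$-algebra by hypothesis, and $R(M) = \Hbul(G^0,M)$, a finite $R$-module with compatible $\Gamma$-action. By Remark \ref{remark:braidedgraded}, $R$ is module-finite over its honestly commutative subalgebra $\opH(G^0,k)$, which is finitely generated (Artin--Tate) and $\Gamma$-stable; this lets one apply the Evens--Venkov theorem \cite{Evens:1961,Venkov:1959} for the finite group $\Gamma$ to a Noetherian commutative coefficient algebra and transfer the conclusions to $R$ and $R(M)$. Thus $E_2(k) = \Hbul(\Gamma, R)$ is Noetherian and $E_2(M) = \Hbul(\Gamma, R(M))$ is finite over it.

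The heart of the argument---and the step I expect to be the main obstacle---is to exhibit a finitely generated commutative subalgebra $B \subseteq E_2(k)$ consisting of permanent cycles over which $E_2(k)$ and $E_2(M)$ are finite; this is the supergroup analogue of the reduction in \cite{Friedlander:1997}. By the Evens--Venkov theorem $E_2(k)$ is finite over the subalgebra generated by the inflation image of $\Hbul(\Gamma,k)$ and by $R^\Gamma = \Hbul(G^0,k)^\Gamma$. The inflation classes occupy the bottom row $E_2^{\bullet,0}$ and so are automatically permanent cycles, lifting to $\Hbul(G,k)$ along the inflation map. For the invariants $R^\Gamma = E_2^{0,\bullet}$ one replaces a finite homogeneous generating set by suitable $p$-th powers: as in the proof of Proposition \ref{proposition:permanentcycles} (and using $\chr(k) = p$), a high enough $p$-power of each generator is annihilated by every differential, hence is a permanent cycle, while $R^\Gamma$ remains finite over the subalgebra that the $p$-powers generate. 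Taking $B$ to be generated by these two families of permanent cycles (and passing to honestly commutative parts as in Remark \ref{remark:braidedgraded}) gives the desired subalgebra, with $E_2(k)$ and $E_2(M)$ finite over $B$ by transitivity of finiteness.

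Finally, applying \cite[Lemma 2.5]{Mastnak:2010} together with \cite[III.2.9, Corollary 1]{Bourbaki:1998}, exactly as in the proof of Theorem \ref{theorem:V(L)cohomfingen} (where, as noted there, the hypothesis that the relevant bigraded algebra be concentrated in even total degrees is superfluous when working with right actions), one concludes that $\Hbul(G,k)$ is a finitely generated $k$-algebra and that $\Hbul(G,M)$ is finite over $\Hbul(G,k)$. The remaining care lies in checking that the module and algebra structures of the spectral sequence are compatible with the cup products as recorded after \eqref{eq:LHSspecseq}, so that the cited finite-generation lemma genuinely applies; these verifications are routine once the permanent-cycle subalgebra $B$ has been produced.
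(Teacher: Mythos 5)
Your overall architecture (base change to make $\pi_0(G)$ constant, the LHS spectral sequence \eqref{eq:LHSspecseq}, finiteness of $E_2$ over a commutative subalgebra, then a ``finite over permanent cycles'' lemma) matches the paper's proof, but the step you yourself identify as the heart of the argument contains a genuine gap. You claim that for a generator $\xi$ of $R^\Gamma \subset E_2^{0,\bullet}$, ``a high enough $p$-power is annihilated by every differential, hence is a permanent cycle.'' This is not a valid formal argument. In characteristic $p$ the Leibniz rule only gives $d_r(\xi^p)=0$ for the differential \emph{on the page where $\xi$ lives}; the resulting class of $\xi^p$ on $E_{r+1}$ is in general not a $p$-th power of anything on $E_{r+1}$ (since $\xi$ itself need not survive), so nothing forces $d_{r+1}$ to vanish on it. Iterating only shows that $\xi^{p^m}$ survives to $E_{r+m}$, while raising to $p$-th powers increases the fiber degree, so the range of potentially nonzero differentials $d_2,\dots,d_{t+1}$ out of bidegree $(0,t)$ grows faster than the powers you take; the process never closes up, and no fixed power is shown to be a permanent cycle. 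Your citation of Proposition \ref{proposition:permanentcycles} misreads it: that proposition is not proved by a formal $p$-power argument but by comparing the May spectral sequence with a second spectral sequence built from May's explicit resolution $X(L)$ and exhibiting honest cocycle representatives. Indeed, the paper stresses (see the introduction and Remark \ref{remark:Bagci}) that exactly this ``permanent cycles for free'' reasoning is the error in Bagci's published argument.

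What the paper does instead at this point is a norm/transfer argument in the style of \cite[Lemma 1.9]{Friedlander:1997} (ultimately Evens' norm map): setting $R=\Hev(G^0,k)_{\ol{0}}$ and $n=|\pi|$, for each homogeneous generator $\xi_i$ of $R^\pi$ one produces an actual cohomology class $\eta_i\in\Hev(G,k)_{\ol{0}}$ with $\eta_i|_{G^0}=\xi_i^n$. The subalgebra $A$ generated by the restrictions $\eta_i|_{G^0}$ then consists of permanent cycles \emph{by construction}, being in the image of the vertical edge map (restriction), and $\Hbul(G^0,M)$ remains finite over $A$ because $\xi_i^n$ still generate a subalgebra over which $R^\pi$, hence $R$, is finite. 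Together with $B=\Hev(\pi,k)$ on the bottom row (your inflation classes, which are indeed automatically permanent cycles), one gets $E_2(M)$ finite over $B\otimes A\cong\Hev(\pi,A)$ by \cite[Theorem 6.1]{Evens:1961}, and \cite[Lemma 1.6]{Friedlander:1997} finishes the proof. So your proposal is repairable, but only by replacing the $p$-th power step with this norm-type lifting; as written, the key claim is unsupported and false in general.
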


\begin{proof}
The strategy for the proof is the same as in \cite[p.\ 221]{Friedlander:1997}. By \cite[\S6.7]{Waterhouse:1979}, there exists a field extension $k'$ of $k$ such that $\pi_0(G)_{k'}$ is a constant group scheme in the sense of \cite[\S2.3]{Waterhouse:1979}. One has $G_{k'} = (G^0)_{k'} \rtimes \pi_0(G)_{k'}$, and as for ordinary group schemes, one has $\Hbul(G,k) \otimes k' \cong \Hbul(G_{k'},k')$ as algebras; this can be checked by applying the functor $-\otimes k'$ to the Hochschild complex for $G$. The field extension $k'$ can be chosen to be perfect (e.g., by taking $k' = \ol{k}$), so without loss of generality we may assume that $\pi_0(G)$ is a constant group scheme. Then $k[\pi_0(G)]^*$ is the group algebra over $k$ for the abstract finite group $\pi := \pi_0(G)(k)$, and the representation theory of $\pi_0(G)$ as a finite (super)group scheme is equivalent to the representation theory of $\pi$ as an abstract finite group.

Now write $G = G^0 \rtimes \pi$, and consider the pair of LHS spectral sequences
\begin{align}
E_2^{i,j}(k) &= \opH^i(\pi,\opH^j(G^0,k)) \Rightarrow \opH^{i+j}(G,k), \label{eq:LHSk} \text{ and} \\
E_2^{i,j}(M) &= \opH^i(\pi,\opH^j(G^0,M)) \Rightarrow \opH^{i+j}(G,M). \label{eq:LHSM}
\end{align}
Since $\Hbul(G^0,k)$ is a graded-commutative graded superalgebra by Remark \ref{remark:Hochschildcobar}, the sub\-algebra $\Hev(G^0,k)_{\ol{0}}$ is commutative in the ordinary sense. Moreover, the assumptions of the theorem imply that $\Hev(G^0,k)_{\ol{0}}$ is a finitely-generated $k$-algebra, and that $\Hbul(G^0,k)$ and $\Hbul(G^0,M)$ are finite over $\Hev(G^0,k)_{\ol{0}}$. Also, $\Hev(G^0,k)_{\ol{0}}$ is a $\pi$-submodule of $\Hev(G^0,k)$ by Remark \ref{remark:evensubmodule}.

Set $R = \Hev(G^0,k)_{\ol{0}}$. By \cite[V.1.9]{Bourbaki:1998}, $R$ is a finitely-generated $R^\pi$-module, and $R^\pi$ is a finitely-generated $k$-algebra. Then $\Hbul(G^0,M)$ is also finite over $R^\pi$. Choose a finite set of homogeneous generators $\xi_i \in R^\pi$, and suppose $\pi$ has order $n$. Then arguing as in the proof of \cite[Lemma 1.9]{Friedlander:1997}, it follows that there exist cohomology classes $\eta_i \in \Hev(G,k)_{\ol{0}}$ such that $\eta_i|_{G^0} = \xi_i^n$. Let $A$ be the subalgebra of $\Hev(G,k)_{\ol{0}}$ generated by the $\eta_i$, and let $B = \Hev(\pi,k)$. Then $B$ is a finitely-generated commutative $k$-algebra by \cite[Corollary 6.2]{Evens:1961}. Now $\pi$ acts trivially on $A$, and $\Hbul(G^0,M)$ is a finite $A$-module via the restriction map from $\Hbul(G,k)$ to $\Hbul(G^0,k)$. Then $E_2(M)$ is a finite module over $B \otimes A \cong \Hev(\pi,A)$ by \cite[Theorem 6.1]{Evens:1961}. Now applying \cite[Lemma 1.6]{Friedlander:1997} to \eqref{eq:LHSk} and \eqref{eq:LHSM}, we conclude that $\Hbul(G,M)$ is a finite $B \otimes A$-module, and hence also a finite $\Hev(G,k)$-module. In particular, taking $M = k$, $\Hbul(G,k)$ is a finitely-generated $k$-algebra.
\end{proof}

\subsection{Cohomology for infinitesimal supergroup schemes} \label{subsection:cohomologyinfinitesimal}

Theorem \ref{theorem:reduceinfinitesimal} shows that the cohomological finite generation problem for finite supergroup schemes reduces to the special case of infinitesimal supergroup schemes. In this section we discuss, in analogy to the situation for ordinary infinitesimal group schemes, how the problem for infinitesimal supergroup schemes reduces to the existence of certain (conjectured) extension classes for $GL(m|n)$.

Continue to assume that $k$ is a perfect field of characteristic $p > 2$, and that $G$ is an affine algebraic super\-group scheme over $k$. Let $M$ be a finite-dimensional rational $\Gev$-module, and let $r \in \N$. Since $G_1$ acts trivially on $M^{(r)}$, there exists a natural $G$-isomorphism of graded superspaces $\Hbul(G_1,M^{(r)}) \cong \Hbul(G_1,k) \otimes M^{(r)} \cong \Hom_k((M^*)^{(r)},\Hbul(G_1,k))$. Then each $e \in \opH^i(G_1,M^{(r)})$ defines a $G$-equivariant homomorphism of graded superalgebras
\[
e: S(M^*(i))^{(r)} \rightarrow \Hbul(G_1,k).
\]

\begin{conjecture} \label{conjecture:univextclasses}
Let $m,n,r \in \N$. Then there exist cohomology classes
\[
e_r^{m,n}  \in \opH^{2p^{r-1}}(GL(m|n),({\glzero}^*)^{(r)}) \quad \text{and} \quad  c_r^{m,n} \in \opH^{p^r}(GL(m|n),({\glone}^*)^{(r)})
\]
that restrict nontrivially to $GL(m|n)_1$. If $m,n \not\equiv 0 \mod p$, then the homomorphism
\[
e_r^{m,n}|_{GL(m|n)_1}: S({\glzero}^*(2p^{r-1}))^{(r)} \rightarrow \Hbul(GL(m|n)_1,k)
\]
induced by $e_r^{m,n}$ coincides with the composition
\[
S({\glzero}^*(2p^{r-1}))^{(r)} \rightarrow S({\glzero}^*(2))^{(1)} \rightarrow \Hbul(GL(m|n)_1,k),
\]
where the first arrow raises elements to the $p^{r-1}$ power, and the second arrow is the horizontal edge map of the May spectral sequence \eqref{eq:FPG1specseqE0}. Similarly, if $m,n \not\equiv 0 \mod p$, then the composition of the homomorphism
\[
c_r^{m,n}|_{GL(m|n)_1}: S({\glone}^*(p^r))^{(r)} \rightarrow \Hbul(GL(m|n)_1,k)
\]
with the vertical edge map in \eqref{eq:FPG1specseqE0} has image consisting of the subalgebra of $\Lambda_s(\gl(m|n)^*)$ gener\-ated by all $p^r$-th powers of elements in the subalgebra $S({\glone}^*)$ of $\Lambda_s(\gl(m|n)^*)$.
\end{conjecture}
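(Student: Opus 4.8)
The plan is to follow the template of Friedlander and Suslin \cite{Friedlander:1997}, who obtained their universal classes $e_r$ for $GL_N$ not by working directly with the group scheme, but by passing to the category of strict polynomial functors and computing the graded algebra $\Ext^\bullet_{\mathcal{P}}(I^{(r)},I^{(r)})$ of self-extensions of the $r$-th Frobenius twist of the identity functor $I$. First I would develop the corresponding category $\mathcal{P}_s$ of strict polynomial \emph{super}functors, whose homogeneous degree-$d$ part should be equivalent to the category of modules for the Schur superalgebra $S(m|n,d)$ once $m,n \gg 0$; evaluation of a superfunctor $F$ on the standard superspace $k^{m|n}$ then yields a polynomial representation of $GL(m|n)$, and the Yoneda $\Ext$-groups in $\mathcal{P}_s$ map compatibly to the rational cohomology of $GL(m|n)$. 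The universal classes would be extracted from a super-analogue of the fundamental computation of $\Ext^\bullet_{\mathcal{P}}(I^{(r)},I^{(r)})$, with the $\Z_2$-grading separating the even contribution, which should recover the Friedlander--Suslin class in cohomological degree $2p^{r-1}$, from the odd contribution, a genuinely new class in cohomological degree $p^r$.

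For the even classes $e_r^{m,n} \in \opH^{2p^{r-1}}(GL(m|n),(\glzero^*)^{(r)})$ I expect a more direct route to be available. Since $\glzero \cong \gl_m \oplus \gl_n$ as a module for the underlying purely even subgroup $GL(m|n)_{\textrm{ev}}$, and this even subgroup coincides with the standard Levi $GL_m \times GL_n \subset GL_{m+n}$, one can try to obtain $e_r^{m,n}$ by restricting the Friedlander--Suslin classes for $GL_{m+n}$ (or by pulling back the classes for $GL_m$ and $GL_n$ separately) and then lifting along the structure maps relating $GL(m|n)$ to its even subgroup. The nondegeneracy hypothesis $m,n \not\equiv 0 \mod p$ is precisely the condition under which the Friedlander--Suslin class restricts nontrivially, since it guarantees that the relevant trace pairings on $\gl_m$ and $\gl_n$ are nonzero, so this hypothesis should enter exactly here.

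The odd classes $c_r^{m,n} \in \opH^{p^r}(GL(m|n),(\glone^*)^{(r)})$ are the crux of the problem and have no classical counterpart. As a module for $GL(m|n)_{\textrm{ev}}$ the odd part decomposes as $\glone \cong \Hom(V_{\ol{0}},V_{\ol{1}}) \oplus \Hom(V_{\ol{1}},V_{\ol{0}})$, and the target cohomological degree $p^r$ matches the phenomenon recorded in Remark \ref{remark:cohomologyabelianL}: the cohomology of an exterior algebra on an odd space is a \emph{polynomial} algebra whose generators have $p^r$-th powers lying in degree $p^r$. I would construct $c_r^{m,n}$ as the image of a nonzero super-extension class coming from the computation of $\Ext^\bullet_{\mathcal{P}_s}$ between Frobenius twists of the odd symmetric-power functors; the essential point is to exhibit a single nonvanishing class in the correct bidegree and to keep track of the $GL(m|n)$-action on its coefficients.

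The hardest step, and the one I expect to occupy the bulk of the work, is the super-analogue of the Friedlander--Suslin $\Ext$-computation itself. Their argument rests on an inductive twisting procedure and a hypercohomology spectral sequence built from the de Rham and Koszul complexes of symmetric and divided powers; in the super setting these complexes acquire both even and odd variables, so the divided, symmetric, and exterior powers of Section \ref{subsection:superexterioralgebra} all intervene simultaneously, and the resulting $\Ext$-algebras must be computed with full attention to the sign conventions and to the interaction between the even Frobenius twist and the odd directions. Once the universal classes are in hand, verifying the two compatibility assertions of the conjecture should be comparatively routine: one restricts to $GL(m|n)_1$, identifies $\Hbul(GL(m|n)_1,k)$ with $\Hbul(\Vg,k)$ via Lemma \ref{lemma:VgDistG1}, and matches the restricted classes against the horizontal and vertical edge maps of the May spectral sequence \eqref{eq:FPG1specseqE0}, using the explicit cochain representatives from the proofs of Propositions \ref{proposition:E1page} and \ref{proposition:G1specseqE0} together with Lemmas \ref{lemma:cocycles} and \ref{lemma:mu}, and the $G$-equivariant identifications recorded in Corollary \ref{corollary:FPG1specseqE2} and Lemma \ref{lemma:Gisog1poly}.
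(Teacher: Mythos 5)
This statement is a \emph{conjecture} in the paper, not a theorem: the paper offers no proof of it and explicitly defers the existence of the classes $e_r^{m,n}$ and $c_r^{m,n}$ to future work, so there is no argument of the author's to compare yours against. More importantly, your submission does not prove it either. What you have written is a research program whose decisive step --- the computation of $\Ext$-groups between Frobenius twists in a yet-to-be-constructed category of strict polynomial superfunctors --- is acknowledged as "the hardest step" but not carried out. Since no candidate cocycle is exhibited and no nonvanishing is established, neither the existence assertion nor the two edge-map compatibilities can be verified; the "comparatively routine" final matching step has no input to act on. The program itself is sensible and is consonant with the paper's stated intention to explore connections to Schur superalgebras, but as submitted it contains no proof content beyond the plan.

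One step of your plan would, moreover, fail as described: obtaining the even classes $e_r^{m,n}$ by restricting the Friedlander--Suslin classes to the even subgroup $GL(m|n)_{\ev} \cong GL_m \times GL_n$ and then "lifting along the structure maps" to $GL(m|n)$. The available structure maps go the wrong way for this. Restriction along the inclusion $GL(m|n)_{\ev} \hookrightarrow GL(m|n)$ maps $\Hbul(GL(m|n),-)$ to $\Hbul(GL(m|n)_{\ev},-)$, and there is no general device for extending a class on a closed subsupergroup to the ambient supergroup. Inflation, on the other hand, is available from the quotient $GL(m|n)/GL(m|n)_1 \cong (GL(m|n)_{\ev})^{(1)}$ of Lemma \ref{lemma:G/Gr}, but any class inflated from this quotient restricts \emph{trivially} to $GL(m|n)_1$ (the composite of inflation with restriction to the kernel vanishes in positive degrees), which is exactly what the conjecture forbids: $e_r^{m,n}$ must restrict nontrivially to $GL(m|n)_1$, and indeed its restriction must realize the horizontal edge map of the May spectral sequence \eqref{eq:FPG1specseqE0}. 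So the even classes cannot be manufactured from the purely even theory by formal maneuvers; like the odd classes $c_r^{m,n}$, they require a genuinely super construction, which is precisely why the paper leaves the statement as a conjecture. (Your reading of the hypothesis $m,n \not\equiv 0 \mod p$ as nondegeneracy of trace pairings is likewise speculation; nothing in the paper supports it, and in the paper that hypothesis functions only as an assumption under which the restricted classes are required to have the stated form.)
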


Our main theorem is that the validity of Conjecture \ref{conjecture:univextclasses} implies that the cohomology ring of an infinitesimal supergroup scheme is a finitely-generated algebra. Recall from Lemma \ref{lemma:FSLemma1.3} that if $G$ is an infinitesimal supergroup of height $r$, then there exists a closed embedding $G \subset GL(m|n)_r$ for some $m,n \in \N$. Since $GL(m|n)$ is a closed subsupergroup scheme of $GL(m'|n')$ whenever $m \leq m'$ and $n \leq n'$, we can always choose $m$ and $n$ so that $m,n \not\equiv 0 \mod p$ and $m+n \not\equiv 0 \mod p$.

\begin{theorem} \label{theorem:conjimpliesfingen}
Let $k$ be a perfect field of characteristic $p > 2$, and let $G$ be an infinitesimal $k$-super\-group scheme of height $r$. Choose a closed embedding $G \subset GL(m|n)_r$ as in Lemma \ref{lemma:FSLemma1.3} with $m,n \not\equiv 0 \mod p$ and $m+n \not\equiv 0 \mod p$, and assume that Conjecture \ref{conjecture:univextclasses} holds for the given values of $m,n,r \in \N$. Then $\Hbul(G,k)$ is a finitely-generated algebra, and $\Hbul(G,M)$ is finite over $\Hbul(G,k)$ for each finite-dimensional $G$-supermodule $M$.
\end{theorem}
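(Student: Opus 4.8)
The plan is to imitate the inductive step in Friedlander and Suslin's treatment of infinitesimal group schemes \cite[\S1]{Friedlander:1997}, using the normal subgroup scheme $G_1$ of $G$ together with the Lyndon--Hochschild--Serre spectral sequence \eqref{eq:LHSspecseq}. First I would identify the quotient $\ol{G} := G/G_1$. Because $G \subset GL(m|n)_r$ is a closed embedding one has $G_1 = G \cap GL(m|n)_1$, so $\ol{G}$ embeds as a closed subsupergroup scheme of $GL(m|n)_r/GL(m|n)_1$. By Lemma \ref{lemma:G/Gr} applied to $GL(m|n)$, whose underlying even subgroup $GL_n \times GL_m$ is reduced, this quotient is isomorphic to $(GL_n \times GL_m)_{r-1}$; the latter is purely even and is a closed subgroup of $(GL_{m+n})_{r-1}$. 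Hence $\ol{G}$ is an \emph{ordinary} infinitesimal group scheme of height $\leq r-1$ inside $(GL_{m+n})_{r-1}$, and since $m+n \not\equiv 0 \mod p$ the classical Friedlander--Suslin theorem applies: $\Hbul(\ol{G},k)$ is a finitely-generated algebra whose generators are detected by restricting the Friedlander--Suslin universal classes for $GL_{m+n}$, and $\Hbul(\ol{G},N)$ is finite over $\Hbul(\ol{G},k)$ for every finite-dimensional $\ol{G}$-module $N$. As $G_1$ acts trivially on its own cohomology by conjugation, the $G$-action on $\Hbul(G_1,M)$ factors through $\ol{G}$, and \eqref{eq:LHSspecseq} reads $E_2^{i,j}(M) = \opH^i(\ol{G},\opH^j(G_1,M)) \Rightarrow \opH^{i+j}(G,M)$.

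Second, I would control the fibre term by the height-one analysis of Section \ref{subsection:MayspecseqG1}. By Corollary \ref{corollary:FPG1specseqE2} and the discussion leading to \eqref{eq:rpolyhomtoG1}, there is a homomorphism of graded superalgebras from $P := S({\gone}^*(p^r))^{(r)} \otimes S({\gzero}^*(2p^{r-1}))^{(r)}$ (with $\g = \Lie(G)$) into $\Hbul(G_1,k)$ over which $\Hbul(G_1,M)$ is finite; this finiteness is unconditional. What is \emph{not} automatic is that this map is $\ol{G}$-equivariant or that its image consists of permanent cycles for \eqref{eq:LHSspecseq}: the even factor is supplied $G$-equivariantly by the horizontal edge map, but the discussion preceding \eqref{eq:rpolyhomtoG1} only produces an algebra lift of the odd factor $S({\gone}^*(p))^{(1)}$, not a $G$-equivariant or visibly global one. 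This is exactly where Conjecture \ref{conjecture:univextclasses} enters. Restricting $e_r^{m,n}$ and $c_r^{m,n}$ from $GL(m|n)$ to $G$, composing their coefficients with the surjection $\gl(m|n)^* \twoheadrightarrow \g^*$ dual to $\g \subset \gl(m|n)$, and using the correspondence recalled at the start of Section \ref{subsection:cohomologyinfinitesimal} between a class in $\opH^i(G_1,W^{(r)})$ with $W^{(r)}$ a $G_1$-trivial module and a $\ol{G}$-equivariant algebra map $S(W^*(i))^{(r)} \to \Hbul(G_1,k)$, one obtains \emph{global} $G$-classes realizing the generators of $P$. Since the vertical edge map of \eqref{eq:LHSspecseq} is restriction to $G_1$, these generators then lie in the edge column $E_2^{0,*}(k)$ as permanent cycles, and $P \to \Hbul(G_1,k)$ becomes a finite homomorphism of $\ol{G}$-algebras.

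Third, I would assemble a finitely-generated bigraded subalgebra $A^{*,*} \subset E_2^{*,*}(k)$ consisting of permanent cycles over which $E_2^{*,*}(M)$ is Noetherian, and apply the module form of \cite[Lemma 1.6]{Friedlander:1997}. For $A$ I take the subalgebra generated by the inflated generators of $\Hev(\ol{G},k)$ in the bottom row $E_2^{*,0}$ (finitely many by the classical result, and permanent cycles since they lie in the bottom row) together with the edge-column classes produced above. Noetherianity of $E_2(M)$ over $A$ I would establish in two stages: since $P \to \Hbul(G_1,k)$ is a finite map of $\ol{G}$-algebras, the finite-module property of $\Hbul(\ol{G},-)$ reduces matters to the finiteness of $\opH^*(\ol{G},P)$ over the image of $A$; and because $P$ is a symmetric algebra on Frobenius twists of $\ol{G}$-modules pulled back from $GL_{m+n}$, the module $\opH^*(\ol{G},P)$ is computed exactly as in \cite[\S1]{Friedlander:1997}, its generators again supplied by the bottom-row $GL_{m+n}$-universal classes. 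Feeding $E_r(k)$ and $E_r(M)$ into the cited lemma then yields that $\Hbul(G,k)$ is finitely generated and that $\Hbul(G,M)$ is finite over it.

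The hardest part is the interaction of the two families of generators. One must check that the several coordinate components of the twisted-coefficient classes $e_r^{m,n}|_G$ and $c_r^{m,n}|_G$ are \emph{simultaneously} permanent cycles in the trivial-coefficient spectral sequence \eqref{eq:LHSspecseq}, and carry out the computation of $\opH^*(\ol{G},P)$ so that its generators align with the bottom-row $GL_{m+n}$-classes generating $\Hbul(\ol{G},k)$. This is precisely where both Conjecture \ref{conjecture:univextclasses} (for the odd and even twists in $P$) and the Friedlander--Suslin classes for $GL_{m+n}$ (for the purely even quotient $\ol{G}$) are indispensable, and where the two separate Frobenius twists of ${\gzero}^*$ and ${\gone}^*$ must be reconciled with the single reduced quotient $\ol{G}$.
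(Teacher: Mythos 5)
Your proposal is correct and follows essentially the same route as the paper's proof: the extension $1 \to G_1 \to G \to G/G_1 \to 1$ with $G/G_1$ an ordinary infinitesimal group scheme of height $\leq r-1$ inside $(GL_{m+n})_{r-1}$, the Lyndon--Hochschild--Serre spectral sequence \eqref{eq:LHSspecseq}, the conjectured classes $e_r^{m,n}|_G$ and $c_r^{m,n}|_G$ supplying global (hence $G/G_1$-equivariant, permanent-cycle) generators controlling the fibre $\Hbul(G_1,M)$, the Friedlander--Suslin classes for $GL_{m+n}$ controlling the base, and \cite[Lemma 1.6]{Friedlander:1997} to conclude. The only differences are presentational: the paper casts the argument as an induction on $r$, treating $r=1$ separately via the May spectral sequence, keeps the coefficient algebra $A$ in terms of $\gl(m|n)^*$ rather than pushing forward along $\gl(m|n)^* \twoheadrightarrow \g^*$, and cites \cite[Theorem 1.5]{Friedlander:1997} directly for the noetherianity of $E_2(M)$ over $B \otimes A$ instead of re-deriving that computation as you sketch in your third paragraph.
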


\begin{proof}
The proof is by induction on $r$. If $r = 1$, then it follows from the naturality of the May spectral sequence and from the discussion at the end of Section \ref{subsection:MayspecseqG1} that the restrictions $e_1^{m,n}|_G$ and $c_1^{m,n}|_G$ define a homomorphism of graded superalgebras
\[
(c_1^{m,n}|_G) \otimes (e_1^{m,n}|_G) : S({\glone}^*(p))^{(1)} \otimes S({\glzero}^*(2))^{(1)} \rightarrow \Hbul(G,k)
\]
over which $\Hbul(G,M)$ is a finite module for each finite-dimensional $G$-supermodule $M$. In particular, $\Hbul(G,k)$ is a finite module over a finitely-generated commutative algebra, hence is itself a finitely-generated algebra.

Now let $r \in \N$ be arbitrary. Let $G' \subset [(GL(m|n)_{\ev})^{(1)}]_{r-1}$ be the image of $G$ under the Frobenius morphism $F_{GL(m|n)}: GL(m|n) \rightarrow (GL(m|n)_{\ev})^{(1)}$, and let $H = G \cap GL(m|n)_1$ be the kernel of the induced surjective homomorphism $F: G \rightarrow G'$. Then there exists an extension of infinitesimal supergroup schemes $1 \rightarrow H \rightarrow G \rightarrow G' \rightarrow 1$. Let $M$ be a finite-dimensional $G$-supermodule, and consider the associated LHS spectral sequence:
\begin{equation} \label{eq:fgLHSspecseq}
E_2^{i,j}(M) = \opH^i(G',\opH^j(H,,M)) \Rightarrow \opH^{i+j}(G,M).
\end{equation}
Set $A = S({\glone}^*(p^r))^{(r)} \otimes S({\glzero}^*(2p^{r-1}))^{(r)}$. The restrictions $e_r^{m,n}|_G$ and $c_r^{m,n}|_G$ define a homomorphism of graded superalgebras $A \rightarrow \Hbul(G,k)$, which composed with the restriction homomorphism $\Hbul(G,k) \rightarrow \Hbul(H,k)^{G'} \subset \Hbul(H,k)$ makes $\Hbul(H,M)$ into a finite $A$-module.

Next, $GL(m|n)_{\ev} \cong GL_m \times GL_n$ is a subgroup scheme of $GL_{m+n}$. Since $GL_{m+n}$ is defined over $\Fp$, we may identify $G'$ with a subgroup scheme of $(GL_{m+n})_{r-1}$. Now recall the cohomology classes
\[
e_i^{(r-1-i)} \in \opH^{2p^{i-1}}(GL_{m+n},\gl_{m+n}^{(r-1)}), \quad 1 \leq i \leq r-1,
\]
exhibited by Friedlander and Suslin \cite[p.\ 215]{Friedlander:1997}. The restrictions $e_i^{(r-1-i)}|_{G'}$ define a homomorphism of graded $k$-algebras
\begin{equation} \label{eq:FSmap} \textstyle
B:= \bigotimes_{i=1}^{r-1} S((\gl_{m+n}^{(r-1)})^*(2p^{i-1})) \rightarrow \Hbul(G',k),
\end{equation}
Composed with the horizontal edge map in \eqref{eq:fgLHSspecseq}, the homomorphism \eqref{eq:FSmap} defines a homomorphism of graded super\-algebras $B \rightarrow \Hbul(G,k)$. The homomorphisms $A \rightarrow \Hbul(G,k) \rightarrow \Hbul(H,k)^{G'}$ and $B \rightarrow \Hbul(G',k) \rightarrow \Hbul(G,k)$ now place us in the setup of \cite[Lemma 1.6]{Friedlander:1997}. Considering $E_2(M)$ as a $B \otimes A$-module via the maps $A \rightarrow \Hbul(H,k)^{G'}$ and $B \rightarrow \Hbul(G',k)$, $E_2(M)$ is a finite $B \otimes A$-module by \cite[Theorem 1.5]{Friedlander:1997}. Then by \cite[Lemma 1.6]{Friedlander:1997}, the homomorphisms $A \rightarrow \Hbul(G,k)$ and $B \rightarrow \Hbul(G',k) \rightarrow \Hbul(G,k)$ make $\Hbul(G,M)$ into a finite $B \otimes A$-module. In particular, $\Hbul(G,M)$ is a finite $\Hbul(G,k)$-module, and $\Hbul(G,k)$ is a finite module over the finitely-generated commutative algebra $B \otimes A$, so we conclude that $\Hbul(G,k)$ is a finitely-generated algebra.
\end{proof}

\begin{remark}
As remarked already in the proof of Theorem \ref{theorem:reduceinfinitesimal}, if $k'$ is a field extension of $k$, then $\Hbul(G,k) \otimes k' \cong \Hbul(G_{k'},k')$ as algebras over $k'$. Then as in Remark \ref{remark:perfectsuperfluous}, the perfectness assumption in Theorem \ref{theorem:conjimpliesfingen} is superflouous.
\end{remark}

\providecommand{\bysame}{\leavevmode\hbox to3em{\hrulefill}\thinspace}

\end{document}